\documentclass[a4paper,10pt,reqno]{article}
\usepackage[utf8]{inputenc}
\usepackage[english]{babel}
\usepackage[margin=1.1in]{geometry} 
\usepackage{amsmath, amsthm, amssymb}
\usepackage{verbatim}
\usepackage{graphicx}
\usepackage{caption}
\usepackage{subcaption}
\usepackage{float}
\usepackage{tikz-cd}
\usepackage{tikz}
\usepackage{setspace}
\usepackage{mathrsfs}
\usepackage{csquotes}
\usepackage{bbm}
\usepackage{accents}
\usepackage{enumitem}
\usepackage{mathtools}
\usepackage{imakeidx}
\usepackage[T1]{fontenc}
\usepackage{mdframed}
\usepackage{todonotes}
\usepackage[final]{hyperref} 
\usepackage{nicefrac}
%

\usepackage{authblk}

\hypersetup{
	colorlinks=true,       
	linkcolor=blue,        
	citecolor=blue,        
	filecolor=magenta,     
	urlcolor=blue         
}
\usepackage{thmtools} 
\usepackage[nameinlink,capitalise]{cleveref}
\allowdisplaybreaks

\addto\extrasenglish{

}



\newcommand{\B}{\mathcal{B}}

\newcommand{\E}{\mathcal{E}}
\newcommand{\EE}{\mathbb{E}}
\newcommand{\F}{\mathcal{F}}

\renewcommand{\H}{\mathcal{H}}

\newcommand{\R}{\mathbb{R}}

\newcommand{\T}{\mathbb{T}}

\newcommand{\Z}{\mathbb{Z}}




\newcommand{\mcA}{\mathcal{A}}

\newcommand{\mcD}{\mathcal{D}}
\newcommand{\mcE}{\mathcal{E}}
\newcommand{\mcF}{\mathcal{F}}

\newcommand{\mcH}{\mathcal{H}}

\newcommand{\mcL}{\mathcal{L}}

\newcommand{\mcV}{\mathcal{V}}


\newcommand{\mbE}{\mathbb{E}}
\newcommand{\mbF}{\mathbb{F}}

\newcommand{\mbI}{\mathbb{I}}

\newcommand{\mbN}{\mathbb{N}}

\newcommand{\mbP}{\mathbb{P}}

\newcommand{\mbR}{\mathbb{R}}
\newcommand{\mbS}{\mathbb{S}}
\newcommand{\mbT}{\mathbb{T}}

\newcommand{\mbZ}{\mathbb{Z}}





\DeclareMathOperator*{\argmin}{arg\,min}
\DeclareMathOperator{\grad}{\nabla}

\DeclareMathOperator*{\esssup}{ess\,sup}

\renewcommand{\epsilon}{\varepsilon}
\renewcommand{\setminus}{\smallsetminus}
\newcommand{\eps}{\epsilon}


\newcommand{\brak}[1]{\left\langle#1\right\rangle}

\newcommand{\expt}[2][]{\mathbb{E}_{#1}\left[#2\right]}

\newcommand{\RN}[1]{%
	\textup{\uppercase\expandafter{\romannumeral#1}}%
}

\newtheorem{theorem}{Theorem}[section]

\newtheorem{definition}[theorem]{Definition}

\newtheorem{corollary}[theorem]{Corollary}
\newtheorem{lemma}[theorem]{Lemma}
\newtheorem{proposition}[theorem]{Proposition}

\newcommand{\supp}{\text{supp}}
\newcommand{\dd}{\mathop{}\!\mathrm{d}}

\theoremstyle{remark}
\newtheorem{remark}[theorem]{Remark}

\numberwithin{equation}{section}


\long\def\avi#1{{\color{red}A:\ #1}}

\long\def\silvia#1{{\color{teal}S:\ #1}}


\title{Homogenisation of a Passive Scalar Transported by Locally Supported White Noise}
\author[1]{Federico Butori}
\affil[1]{federico.butori@sns.it \protect\\ Scuola Normale Superiore, Classe di Scienze \protect\\ Piazza dei Cavalieri, 7, 56126 Pisa, Italia
}

\author[2]{Avi Mayorcas}
\affil[2]{ am2735@bath.ac.uk \protect\\ Department of Mathematical Sciences, University of Bath \protect\\ North Rd, Claverton Down, Bath BA2 7AY
}

\author[3]{Silvia Morlacchi}
\affil[3]{silvia.morlacchi@dm.unipi.it \protect\\ Dipartimento di Matematica, Universit\`a di Pisa \protect\\ Largo Pontecorvo 5, 56127 Pisa, Italia
}

\date{\today}

\begin{document}
\maketitle

\begin{abstract}
%
Stochastic perturbations of transport type are a common and widely accepted way of representing turbulent effects in fluid dynamics models. In many known examples, it even leads to improved solution theory, a phenomenon known as \emph{regularization by noise}. A common thread in the recent literature on the topic is the so-called \emph{It\^o-Stratonovich diffusion limit}. By selecting Stratonovich transport noise with carefully arranged vector fields, one can show that the solution of certain SPDEs are close, in an appropriate topology, to an effective, deterministic, equation with a new effective second order elliptic operator, linked to the Ito-Stratonovich corrector.
In this work, we deal with a passive scalar model with molecular diffusivity $\kappa$. Starting from the results in [Flandoli \emph{et al.}, 2022, \emph{Philos. Trans. Roy. Soc. A}, 380(2219)], we consider a transport noise made by a sum of independent and compactly supported vector fields. 
This setting is relevant for models of stratified turbulence which naturally occur in boundary layers and Boussinesq models. 
Due to the anisotropic nature of the noise, the identification of the limit equation is not straightforward as in all other examples known in literature, as the Ito-Stratonovich corrector is a generic second order elliptic operator with non-constant coefficients.
Using tools from Homogenisation theory, we obtain a representation for the limiting effective diffusivity matrix.
Exploiting this representation, we study asymptotics, in the $\kappa \rightarrow 0$ regime, of the effective diffusivity across a number of vector field regimes parametrised by the radius of their support. Finally, we provide a careful numerical analysis of the effective diffusivity, discovering a nonlinear behavior for $\kappa \rightarrow 0$, in some regimes. 

\vspace{0.3cm}

\begin{small}
\noindent \textbf{Keywords}: passive scalar advection, turbulent transport, homogenisation, effective diffusivity. \\ \\
\noindent \textbf{AMS Subject Classification}: 76F25, 76M50, 76M35, 60H15, 35B27, 35R60 \\ \\
\end{small}


\end{abstract}


\tableofcontents

\section{Introduction}
We study scaling limits, as $N\to +\infty$, of solutions to the following stochastic passive scalar equation on $\mbR_+ \times \mbT^2$,
\begin{equation}\label{eq:main_strat_N}
    d u^N_t = \sqrt{2}\sum_{k \in \mbZ^2} \sigma_{k}^N\cdot \grad u^N_t \circ dW_t^{k} + \kappa\Delta u^N_t dt,
\end{equation}
where $\sigma^N_k$ are radial divergence free vector fields, centred at lattice points $\nicefrac{k}{N}$, $k \in \Z^2$ and support contained within balls of radius proportional to $N^{-1}$ (see \autoref{sec:definition_of_noise}, Assumption \ref{ass 1}). We refer to $\sigma_k^N$ as vorticity \emph{patches}. Finally, we introduce a parameter $c>0$ which governs the overlap of adjacent patches: for small values of $c$ the patches are sparse, with little to no overlap, while for larger values they cover the space domain multiple times.
We are especially interested in the behaviour of the limiting equation ($N\to \infty$) in the small molecular diffusivity ($\kappa\ll 1$) and sparse ($c \le \nicefrac{\sqrt{5}}{2}$) regime. The treatment of the sparse regime, in particular, is one of the main novelties of this work (\emph{c.f} \autoref{rem:unif_ell}), which we can treat thanks to some tools coming from homogenization theory, never employed before in this context. 
The precise set-up of the noise and vector fields are given below.

The SPDE \eqref{eq:main_strat_N} is a simple model of a passive scalar diffusing at rate $\kappa>0$ in a stochastic environment which is spatially varying, Gaussian and white in time. The stochastic transport is motivated on the one hand as a simplified model of a turbulent fluid (see \cite{kraichnan_68,kraichnan_94_anomalous,chertkov_falkovich_96_anomalous,frisch_mazzino_vergassola_98_intermittency,majda_kramer_99_turbulence,chaves_gawcedzki_horvai_kupianen_vergassola_03_lagrangian,sreenivasan_19_turbulent}, and also \cite{flandoli2022additive, debussche_pappalettera_24_second_order}) and on the other from the perspective of regularisation by noise \cite{majda_kramer_99_turbulence,flandoli_gubinelli_priola_10_wellposed,brzezniak_existence_nodate, galeati_leahy_nilssen_25_rough, roveri_well-posedness_2024, flandoli_galeati_luo_21_delayed,flandoli_luo_21_highmode, coghi_maurelli_24_kraichnan, galeati_grotto_maurelli_24_anomalous, agresti_global_2024}. Our particular choice of noise is motivated, in part, by models of eddy turbulence generated by a fluid moving in a bounded domain, \cite{flandoli_galeati_luo_22_eddy,flandoli_luongo_22_channel}. While we treat \eqref{eq:main_strat_N} on the torus, the locally supported vector fields are analogous to those considered in \cite{flandoli_galeati_luo_22_eddy} which allowed the authors to represent boundary layer turbulence which forms when a viscous fluids moves through a bounded domain. The model considered in \cite{flandoli_galeati_luo_22_eddy} is  variant of the Boussinesq model for heat conductance in a bounded domain where the vector fields $\{\sigma_k\}_{k\geq 0}$ vanish at the boundary, in order to model a no-slip boundary condition on the turbulent fluid and $u^N$ represents a scalar field transported by the noise and diffusing within the domain. In that work the authors show that for an appropriate choice of vector fields and $N$ sufficiently large, solutions to \eqref{eq:main_strat_N} are arbitrarily close, in mean square, to solutions of an \emph{enhanced} divergence form parabolic equation, depending on $N$. A similar analysis is given, for example, in \cite[Sec.~4.1]{majda_kramer_99_turbulence}.

Scaling limits of stochastic transport equations were first considered in \cite{galeati_20_convergence}, leading to a large number of works (of which we only mention a selection) \cite{flandoli_scaling_2021, flandoli_galeati_luo_21_delayed, flandoli_galeati_luo_21_mixing,flandoli_luo_21_highmode, flandoli_2d_2023, butori_luongo_24_magnetic, butori_mean-field_2025, agresti2024AnomalousDissipationInduced, butori_background_2024}, considering scaling limits of linear and non-linear equations similar to \eqref{eq:main_strat_N}. The backbone of thess works lies in rewriting the Stratonovich SPDE in its (formally equivalent) It\^o form, introducing a corrector term, which usually takes the form of a dissipative second order operator, and designing a sequence of transport coefficients, with high spatial frequency, such that the martingale term in the equation becomes infinitesimal in negative topologies, while the corrector term stays of order one (we give more a technical discussion in \autoref{subsec: discussion}). \\
However, all the results mentioned above are concerned with transport noise defined such that the Stratonovich corrector can directly be seen to be strictly dissipative, most commonly (with the only exceptions of \cite{flandoli_galeati_luo_22_eddy, flandoli_luongo_22_channel}) being a constant multiple of the Laplacian. By considering anisotropic vector fields (i.e. with local support), the nice form of the Stratonovich corrector is lost, and it becomes a general second order elliptic operator with non-constant coefficients, depending on the scaling parameter $N$. This leads us to apply tools of homogenization theory to pass to a unified scaling limit in both the martingale noise and second order operator. 

More specifically, the scaling argument shows that, for $N$ large enough, solutions to the stochastic equation \eqref{eq:main_strat_N} are close, in an appropriate weak sense, to solutions of the parabolic PDE with \textit{modified} diffusivity 
\begin{equation*}
    \partial_t \tilde u^N = \operatorname{div}((\kappa I + A^N(x)) \nabla \tilde u^N_t).
\end{equation*}
The matrix $A^N$ is positive semi-definite everywhere and is determined by the choice of vector fields $\{\sigma_k\}_{k\geq 0}$. However, in the case we are mostly interested, where the vector fields are sparse ($c\le \sqrt{5}/2$), $A^N$ is not uniformly strictly positive definite, thus making unclear whether the second order operator appearing in the PDE is `more elliptic' than the original laplacian (i.e. the \emph{effective diffusivity} is larger than $\kappa$). We refer again the reader to the discussion in \autoref{subsec: discussion}. 
\\

The first goal of this work is to complete the scaling argument in \cite{flandoli_galeati_luo_22_eddy}, by identifying the limit parabolic equation for $N \rightarrow \infty$; this is the content of \autoref{th:main_introduction}. We do so employing, for the first time in this context, tools from Homogenization theory, which readily gives us a way of quantifying the turbulent effective diffusivity of the limiting model. To the best of our knowledge, even if heuristically the \emph{It\^o-Stratonovich diffusion limit} has always been interpreted as a sort of homogenization technique, this is the first time in which this parallel is rigorously made explicit. 
Moreover, this allows us to consider more general choices of the vector fields $\{\sigma_k\}_{k\ge 0}$ (i.g. the sparse $c\le \sqrt{5}/2$ case) with respect to the existing literature, even if not all of them will be compatible with a form of enhancement of dissipation. 
%
This leads to our second and main motivation for this work, to investigate the behaviour of the effective diffusivity of the homogenized model, with respect to our main parameters, the radius of the patches $c$ and the initial molecular diffusivity $\kappa$; this is the content of \autoref{thm: effective_bounds_intro}. This analysis is not straightforward in the sparse case ($c< \sqrt{5}/2$), due to the lack of uniform ellipticity of the Statonovich corrector $\operatorname{div}(A^N \nabla \cdot)$ (a property always assumed in all the above mentioned works, see \autoref{rem:unif_ell})

The analysis is based on variational tools from homogenization theory, which we have access to from the first part. At the end of the work, we present also numerical results which complete the analysis presented in the second part. The numerical results aim to clarify the exact asymptotic behaviour of the effective diffusivity for low values of $\kappa$, and to show how, depending on $c$, there are clear transition between different regimes. 
\\

The manuscript is arranged as follows: in \autoref{sec: setting} we introduce the setting and we define the basics objects of our analysis; in \autoref{sec: scaling} we prove the main scaling limit, identifying the limit \emph{homogenized} PDE, proving \autoref{th:main_introduction}. In \autoref{sec:diffusivity} we introduce the variational setting for the Homogenization problem of the preceding section and use it to deduce bounds on the \emph{effective diffusivity matrix}, proving \autoref{thm: effective_bounds_intro}. Finally in \autoref{sec: numerics} we present some numerical computations to fill the gaps of our analysis and deduce precise asymptotics of the effective diffusivity for low values of $\kappa$, across several patches density regimes, depending on $c$. 

\subsection{Notation and Preliminaries}

\begin{itemize}
	\item $\mbN \coloneqq \{0,1,\ldots\}$, $\mbN_0 \coloneqq \mbN \setminus \{0\}$, $\mbZ = -\mbN \cup \mbN$ and $\mbZ_0 = \mbZ\setminus \{0\}$.
\item We set $\mbT^2 \coloneqq \mbR^2/\mbZ$ and without loss of generality identify $\mbT^2 =[0,1]^2$ with periodic boundary conditions.
    %
	%
	%
\item For $x = (x_1,x_2)\in \mbR^2$ we take the convention $x^\perp = (-x_2,x_1)$. So that $x^\perp$ is a counter-clockwise rotation of $x$ through $\nicefrac{\pi}{2}$.
\item We define the finite box of side length $n\in \mbN$ with bottom left corner $k\in \mbZ^2$ by the short-hand notation
\begin{equation*}
    \Box^2_n(k) \coloneqq \mbZ^2 \cap \left([k_1,n]\times [k_2,n]\right).
\end{equation*}
where it will not cause confusion we simply set $\Box^2_n \coloneqq \Box^2_n((0,0))$.
    
    \item We use the letter $\mathcal{L}$ to denote linear, possibly unbounded, operators between Banach space and we will indicate their domain with $\mathcal{D}(\mathcal{L})$. 
	%
	\item Given $n\geq 1$ and a measurable map $f:\mbT^2 \to \mbR^n$ we set
	\begin{equation*}
		\|f\|_{L^p_x} \coloneqq \begin{cases}
			\left(\int_{\mbT^2} |f(x)|^p\dd x\right)^{\nicefrac{1}{p}}, & p\in [1,+\infty),\\
			\esssup_{x\in \mbT^2} |f(x)|, & p=+\infty,
		\end{cases}
	\end{equation*}
	and
	\begin{equation*}
		L^p(\mbT^2;\mbR^n) \coloneqq \{ f:\mbT^2 \to \mbR^n \,:\, \|f\|_{L^p_x} <\infty \}, \quad L^p_0(\mbT^2;\mbR^n) \coloneqq \{ f\in L^p(\mbT^2;\mbR^n)\,:\,  \langle f,1\rangle_{L^2(\mbT^2)}=0\, \}.
	\end{equation*}
    \item For $s\in \R$ we indicate with $\H^s(\T^2; \R^n)$ the usual Sobolev spaces of periodic functions, moreover we will indicate with $\H^s_0(\T^2, \R^n)$ the subspace of $\H^s(\T^2, \R^n)$ made by functions of zero mean, endowed with the usual homogeneous norm. 
    %
\item For $n\in \mbN_0$ we let $C^\infty(\mbT^2;\mbR^n)$ denote the Fr\'echet space of smooth periodic functions and $C^\infty_0(\mbT^2;\mbR^n)$ denote those which are mean free i.e. $\phi \in C^\infty(\mbT^2;\mbR^n)$ such that $\langle \phi_i,1\rangle_{L^2(\mbT^2)} =0$ for all $i=1,\ldots,d$. 
\item By convention, when we do not specify the range in a function space we mean the space of appropriate scalar maps. For example we set $L^2(\mbT^2)= L^2(\mbT^2;\mbR)$ etc.
    \item Given two Banach space valued semi-martingales $X,\,Y:\Omega\times \mbR_+ \to E$ defined on the same filtration $\F_t$, we write $\mbR_+ \ni t\mapsto [X]_t$ for the quadratic variation and $\mbR_+ \ni t\mapsto [X,Y]_t$ for the covariation.
\item Given a separable Hilbert space $E$, a bounded interval $I\subseteq \mbR_+$ and a filtration $\mbF \coloneqq \{\mcF_t\}_{t\in I}$, we let $L^2_{\mbF}(I; E)$ denote the space of progressively measurable processes $Z : \Omega\times I\to X$ such that
\begin{equation*}
    \EE\Bigg[\int_0^T \|Z_t\|_{X}^2dt\Bigg] < +\infty
\end{equation*}
and $C_{\mbF}(I ; X)$ to be the subspace of continuous adapted processes such that 
$$\expt[]{\sup_{t\in [0,T]}\|Z_t\|^2_X} < \infty.$$
\item Given a $2\times 2$ matrix $M$, possibly depending on the space parameter $x\in \T^2$, we say that $M$ is $\lambda$-\emph{elliptic}, or simply \emph{elliptic} if for every unitary vector $\xi \in \mathbb{S}^1$ it holds 
$$\lambda \le \xi \cdot M\xi \le \frac{1}{\lambda} .$$
If $M$ is space dependent, we say that it is \emph{uniformly} elliptic, if it is $\lambda$-elliptic for every $x\in \T^2$ for some $\lambda$ independent of $x$. 
\end{itemize}
\subsection{Elements of Homogenization Theory} 
We briefly recall the main setting of the elliptic Homogenization Theory, which we shall employ later. 
Let $M(x)$ be a smooth field of symmetric, uniformly elliptic matrices on the periodic domain $\T^2$ and for any $N>1$ and $f^N\in \mcH^{-1}(\T^2)$ consider the elliptic problem, understood in its weak formulation 
\begin{equation}\label{abs_ell_intro}
    \nabla \cdot (M(Nx) \nabla u^N(x))=f^N(x), \qquad u^N \in \dot \H^1(\T^2).
    \end{equation}

Setting $e_1=(1,0)$, $e_2=(0,1)$ we let $\phi_i$ (for $i=1,2$), be the solution of the elliptic PDE problem, still understood in its weak formulation 
\begin{align}\label{eq:corrector_gen}
    \begin{cases}
        \nabla\cdot \Big(M(x) e_i+ M(x) \nabla \phi_i(x)\Big) =0 \\
        \phi_i \in \dot \mcH^1_{per}(\mbT^2)
    \end{cases}
\end{align}
which is well-posed, thanks to the uniform ellipticity of $M(x)$, (\emph{c.f.} \cite[Sec. 6.2.1]{evans_10}).

We define the matrix with constant entries $\bar{M}$ as 
\begin{equation} \label{def: hom_H_gen}
    \bar M_{ij} = (\bar M e_j)\cdot e_i := \int_{\mbT^2} (M(x)(e_j + \nabla \phi_j(x)))\cdot e_i\dd x = \int_{\mbT^2} \left(M_{ij}(x) + (M(x) \nabla\phi_j(x))\right)\cdot e_i \dd x
\end{equation}
Then, informally, the matrix $\bar M$, describes the effective diffusivity of \eqref{abs_ell_intro} in the limit $N\rightarrow \infty$. More precisely, provided that $f^N \rightarrow f$ in $\dot \H^{-1}(\T^2)$, then the solution $u^N$ to \eqref{abs_ell_intro} converges in $L^2(\T^2)$ to the unique solution $u(x) \in \dot \H^1(\T^2)$ of 
\begin{equation}\label{abs_ell_hom_intro}
    \nabla \cdot (\bar M \nabla u(x))=f(x)
    \end{equation}
 We refer the reader to \cite[Chapter 1]{bensoussan1978AsymptoticAnalysisPeriodic} for a precise statement and for a proof and further discussion. 
 In this manuscript we will mostly be concerned with the parabolic problem rather than the elliptic one, however since $M$ will always be time independent in our setting, the homogenization of the parabolic problem reduces to homogenization of the elliptic one (see \cite[Ch.~2, Rem.~1.6]{bensoussan1978AsymptoticAnalysisPeriodic}). Specifically, the solution of 
 \begin{equation}\label{abs_ell}
    \partial_t u^N(t,x)=\nabla \cdot (M(Nx) \nabla u^N(t,x)), \qquad u^N(0, \cdot )= u_0^N\in \dot \H^1(\T^2), \quad u^N \in C(0, T; \dot \H^1(\T^2)).
    \end{equation}
converges in $L^2(0, T; L^2(\T^2))$ to the unique solution $u(t,x) \in C(0, T; \dot \H^1(\T^2))$ of
\begin{equation}\label{abs_ell_intro_parab}
    \partial_t u(t,x)=\nabla \cdot (\bar M(x) \nabla u(t,x)), \qquad u(0, \cdot )= u_0\in \dot \H^1(\T^2), 
    \end{equation}
provided that $u^N_0 \rightarrow u_0$ in $L^2(\T^2)$
 In our setting, we give an analogous version of this statement in \autoref{prop:pde_homogenisation}.
 
 Finally, we will need the following properties of the homogenized matrix $\bar M$, whose proofs, for the reader's convenience, are recalled in \autoref{app_A1}.
 \begin{lemma}\label{lem:homogenised_upper_lower_symmetric}
    Let $M(x)$ be a smooth and periodic field of symmetric matrices and let $0\leq \lambda \leq \Lambda <+\infty$ be such that
    \begin{equation}
     \sup_{x\in \mbT^2}|M(x)\xi| \leq \Lambda  \xi \quad \text{and}\quad \inf_{x\in \mbT^2} M(x)\xi \cdot \xi \geq \lambda |\xi|^2 \quad \text{for all }\, \xi\in \mbR^2.  
    \end{equation}
    Furthermore, we fix $\phi_i\in \H^1(\T^2)$, for $i=1,2$, weak solutions to \eqref{eq:corrector_gen}. Then, the following facts  hold.
    \begin{enumerate}[label=\roman*)]
        \item For every $\xi\in \mbR^2$,
        \begin{equation}
            |\bar{M} \xi| \leq  \Lambda |\xi| \left(\sum_{i=1}^2 \int_{\mbT^2} \left|e_i + \nabla \phi_i(x)\right|^2\dd x\right)^{\frac{1}{2}}.
        \end{equation}
        \label{it:bar_H_kappa_upper_bnd}
        \item For every $\xi \in \mbR^2$,
        \begin{equation}
            \bar{M} \xi \cdot \xi \geq  \lambda |\xi|^2.
        \end{equation}
        \label{it:bar_H_kappa_lower_bnd}
        \item The matrix $\bar{M}$ is symmetric. \label{it:bar_H_kappa_symmetric}
    \end{enumerate}
\end{lemma}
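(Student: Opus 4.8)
The plan is to reduce all three items to the single representation
\[
\bar M\xi = \int_{\mbT^2} M(x)\, w_\xi(x)\dd x, \qquad w_\xi := \xi + \nabla\phi_\xi, \quad \phi_\xi := \xi_1\phi_1 + \xi_2\phi_2 \in \dot\H^1_{per}(\mbT^2),
\]
valid for every $\xi = (\xi_1,\xi_2)\in\mbR^2$, which follows at once by unwinding \eqref{def: hom_H_gen} and using linearity of \eqref{eq:corrector_gen} in the data $e_i$: writing $w_i := e_i + \nabla\phi_i$, one has $(\bar M\xi)\cdot e_i = \int_{\mbT^2} (M w_\xi)\cdot e_i\dd x$ for $i=1,2$, i.e. $\bar M\xi = \int_{\mbT^2} M w_\xi\dd x$. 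Beyond this, the only inputs are the weak formulation of \eqref{eq:corrector_gen}, namely $\int_{\mbT^2} (M w_j)\cdot\nabla\psi\dd x = 0$ for all $\psi\in\dot\H^1_{per}(\mbT^2)$, the Cauchy--Schwarz inequality, the normalisation $|\mbT^2|=1$, and $\int_{\mbT^2}\nabla\psi\dd x = 0$ for periodic $\psi$.

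For item~\ref{it:bar_H_kappa_symmetric} I would first pass to the symmetric bilinear form of the representation. Testing \eqref{eq:corrector_gen} for the index $j$ against the admissible function $\psi = \phi_i$ gives $\int_{\mbT^2}(M w_j)\cdot\nabla\phi_i\dd x = 0$, hence, since $e_i = w_i - \nabla\phi_i$,
\[
\bar M_{ij} = \int_{\mbT^2}(M w_j)\cdot e_i\dd x = \int_{\mbT^2}(M w_j)\cdot w_i\dd x = \int_{\mbT^2} w_j\cdot (M w_i)\dd x,
\]
where the last equality uses symmetry of $M(x)$; this is manifestly symmetric in $(i,j)$, which proves item~\ref{it:bar_H_kappa_symmetric}. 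Polarising, $\bar M\xi\cdot\xi = \int_{\mbT^2} (M w_\xi)\cdot w_\xi\dd x$ for every $\xi$.

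The lower bound, item~\ref{it:bar_H_kappa_lower_bnd}, then follows by applying the ellipticity hypothesis pointwise with the vector $w_\xi(x)$: $\bar M\xi\cdot\xi \geq \lambda\int_{\mbT^2}|w_\xi|^2\dd x$, and expanding $\int_{\mbT^2}|w_\xi|^2\dd x = |\xi|^2 + 2\xi\cdot\int_{\mbT^2}\nabla\phi_\xi\dd x + \|\nabla\phi_\xi\|_{L^2_x}^2 = |\xi|^2 + \|\nabla\phi_\xi\|_{L^2_x}^2 \geq |\xi|^2$, the cross term vanishing by periodicity of $\phi_\xi$ and the first term using $|\mbT^2|=1$. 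For the upper bound, item~\ref{it:bar_H_kappa_upper_bnd}, I would use the non-symmetrised representation directly: $|\bar M\xi| \leq \int_{\mbT^2}|M(x) w_\xi(x)|\dd x \leq \Lambda\int_{\mbT^2}|w_\xi|\dd x \leq \Lambda\|w_\xi\|_{L^2_x}$ by Cauchy--Schwarz together with $|\mbT^2|=1$; and then $\|w_\xi\|_{L^2_x} = \|\xi_1 w_1 + \xi_2 w_2\|_{L^2_x} \leq |\xi_1|\,\|w_1\|_{L^2_x} + |\xi_2|\,\|w_2\|_{L^2_x} \leq |\xi|\big(\sum_{i=1}^2\|w_i\|_{L^2_x}^2\big)^{1/2}$ by the triangle and Cauchy--Schwarz inequalities, which is exactly the claimed estimate once one recalls $w_i = e_i + \nabla\phi_i$.

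There is no genuine obstacle here — the argument is essentially bookkeeping around the weak formulation of \eqref{eq:corrector_gen}. The two points deserving a line of care are: that the correctors $\phi_i$ are legitimate test functions in \eqref{eq:corrector_gen}, which holds since by hypothesis they lie in $\dot\H^1_{per}(\mbT^2)$ (zero mean); and that no step uses $\lambda>0$, so the bounds and symmetry persist in the degenerate case $\lambda = 0$ allowed by the statement, where the existence of the correctors is simply assumed rather than deduced.
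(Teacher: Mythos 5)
Your proof is correct and follows essentially the same route as the paper's: you use the representation $\bar M\xi=\int_{\mbT^2}M(\xi+\nabla\phi_\xi)\,\dd x$, test the corrector equation against $\phi_i$ to symmetrise the bilinear form (giving items~\ref{it:bar_H_kappa_symmetric} and~\ref{it:bar_H_kappa_lower_bnd}), and combine Cauchy--Schwarz with $|\mbT^2|=1$ for item~\ref{it:bar_H_kappa_upper_bnd}, which is exactly the chain of inequalities in \autoref{sec:standard_homogenised_statements} up to a harmless reordering of the triangle/Cauchy--Schwarz steps. Your closing remark that no step uses $\lambda>0$ is a correct and worthwhile observation, consistent with the statement allowing $\lambda\geq 0$.
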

\begin{proof}
    See \autoref{sec:standard_homogenised_statements}.
\end{proof}

\section{Setting}\label{sec: setting}
We deal with the stochastic passive scalar equation \eqref{eq:main_strat_N}, posed on $\mbR_+ \times \mbT^2$.
%
The noise coefficients $\sigma_k$ are spatially localized vorticity \textit{patches}, $\left\{W_t^k\right\}_{k\ge 0}$ is a sequence of i.i.d. standard Brownian motions and $\kappa>0$ is the \emph{molecular diffusivity}.
In this section we give the definition of the noise and of the main objects of our analysis. 
\subsection{Definition of the Noise}\label{sec:definition_of_noise}
We begin by constructing a
family of locally supported vortex \emph{patches}. Let $ \Lambda^N$ be the integer lattice of $\R^2$ with resolution $\nicefrac{1}{N}$ for some integer $N\in \mbN$. That is, 
\begin{equation}\label{eq:lattice_def_1}
     \Lambda^N \coloneqq \left(\frac{1}{N} \Z\times \frac{1}{N}\mbZ\right).
 \end{equation} 
We enumerate $\Lambda^1$ by $\mbZ^2$ and then set
\begin{equation}\label{eq:lattice_def_2}
     \Lambda^N \ni x^N_k \coloneqq \frac{k}{N},
    \quad k \in \mbZ^2 .
\end{equation}
 We will centre each vortex patch on the nodes of this lattice. The vorticity will be constructed by defining its stream function. Given a constant $c\in (0,+\infty)$, fix $\psi\in C^\infty(\mbR^2;\mbR)$ with $\supp(\psi)\in B(0,c)$ and such that there exists a smooth scalar function $f\in C^\infty(\mbR;\mbR)$ for which
\begin{equation}\label{eq:psi_props}
  \psi(x)= f(|x|).
\end{equation}
Then, fixing a family of sequences $\{\theta^N\}_{N\in \mbN} \subset \ell^\infty(\mbZ^2;\mbR)$, a radius $r>1$, integer $N\in \mbN$ and $k\in \mbZ^2$, we set
\begin{equation*}
    \sigma^N_k(x)\coloneqq  \theta^N_k \frac{1}{r}(\grad^\perp\psi)\left(\frac{x-x^N_k}{r}\right) = \theta^N_k \frac{1}{r} f'\left(\frac{|x-x^N_k|}{r}\right) \frac{(x-x^N_k)^\perp}{|x-x^N_k|}.
\end{equation*}
%
Note that since $c>0$ is finite, there exists some $n(c)\in \mbN$ such that 
\begin{equation}
    \supp\left(\sum_{|k|\geq n(c)}\sigma^N_k\right) \cap \mbT^2 = \emptyset. 
\end{equation}
The time fluctuations are modelled as time varying stochastic weights multiplying each vortex patch, which we choose to be Brownian motions. To keep the noise periodic on $\mbT^2$ these weights must agree for nodes $x^N_k,\,x^N_{k'} \in \Lambda^N$ such that $x^N_k-x^N_{k'}\in \Z^2$. To do so we introduce a sequence of $\mbR$ valued Brownian motions $\{W_t^k\}_k$ such that 
\begin{equation*}
    \big[W^k, W^{k'}\big]_t= \begin{cases}
    t, \qquad \text{if } x_k-x_{k'}\in \Z^2\\
    0 \qquad  \text{otherwise}.
\end{cases}
\end{equation*}
Denote by $\mbF=\{\mcF_t\}_{t\geq 0}$ the filtration generated by the collections of these Brownian motions.
For the same reason, we also fix the real sequence $\{\theta^N_k\}_{k\in \mbZ^2}$ to be such that 
\begin{equation*}
    \theta_k^N = \theta_{k'}^N \qquad \text{ if }  x_k-x_{k'}\in \Z^2.
\end{equation*}
Now it is easy to check that our random velocity field is periodic. Letting $L=(l_1, l_2) \in \Z^2$ be a given shift and defining $k' := k -NL $, we have
\begin{align*}
    \sum_{k\in \Z^2} \sigma_k^N(x + L)W_t^k &= \sum_{k\in \Z^2}\theta_k^N\frac{1}{r}(\grad^\perp\psi)\left(\frac{x + L-k/N}{r}\right)W_t^k  \\
    &= \sum_{k'\in \Z^2}\theta_{(k'+NL)}^N\frac{1}{r}(\grad^\perp\psi)\left(\frac{x - k'/N}{r}\right) W_t^{k'+ NL} \\
    &= \sum_{k'\in \Z^2}\theta_{k'}^N\frac{1}{r}(\grad^\perp\psi)\left(\frac{x - k'/N}{r}\right) W_t^{k'}\\
    &= \sum_{k\in \Z^2} \sigma_k^N(x)W_t^k.
\end{align*}
Next we introduce another important object, the symmetric, correlation matrix 
\begin{equation}\label{eq:A_N_x_y}
    A^N(x,y) \coloneqq \sum_{k\in \mbZ^2} \sigma_k^N(x)\otimes \sigma_k^N(y).
\end{equation}

This matrix field coincide with the spatial covariance of the noise provided that $r< (2c)^{-1}$. Indeed, if $k' = k-NL$ for some $L\in \Z^2_0$ as above, then for all $x\in \mbT^2$ either $\sigma_k(x)$ or $\sigma_{k'}(x)$ is zero, since it cannot be the case that both vectors $x-x_k$ and $x-x_{k'}$ lie in $B(0, rc)$. Thus, in this case, the correlation between $\sigma_k(x)W_t^k$ and $\sigma_{k'}(x)W_t^{k'}$ is zero, even if $[W_t^{k'} , W_t^k]=t$.

 From now on we always assume $r<(2c)^{-1}$. Since, in the end, we will send $r\rightarrow 0$ while keeping $c$ fixed, this condition is no restriction at all.

By a slight abuse of notation we will often omit the superscript $N$ when $N=1$ and write
\begin{equation}\label{eq:A_N_x}
    A^N(x) \coloneqq \sum_{k\in \mbZ^2} \sigma_k^N(x)\otimes \sigma_k^N(x), \qquad A(x)\coloneqq A^1(x).
\end{equation}

We end this subsection by introducing the main scaling assumptions we will use throughout the work. Let us introduce some compact notation. Given any function $f$ and a scaling parameter $r>0$ we define 
\begin{equation}\label{eq:r_scaled_definition}
    (f)_r(\,\cdot\,):= \frac{1}{r}f\left(\frac{\cdot}{r}\right).
\end{equation}
for which we readily have the identity
\begin{equation}\label{eq:r_scaled_vortex_bound}
    \left\|  (f)_r \right\|_{L^2_x(r\T^2)} \, =  \|f\|_{L^2_x(\T^2)}.
\end{equation}
As a result, it is straightforward to observe the following.
\begin{lemma}\label{lem:A_N_rescaling}
    If \begin{equation}\label{ass 0}
        \theta_k^N = \frac{1}{N}   \text{ for all }k\in \Z^2, \tag{A0}
         \end{equation}
         then 
\begin{equation}\label{eq:A_N_scaling_gen_r}
        A^N(x)= (A(Nx))_{Nr} = \sum_{k\in \Z^2}(\nabla^\perp \psi)_{Nr}({Nx-k})\otimes (\nabla^\perp \psi)_{Nr}({Nx-k}).
    \end{equation}
    In particular, if \begin{equation}\label{ass 1}
        \theta^N_{k}= r = \frac{1}{N} \text{ for all }k\in \Z^2 \tag{A1}, 
         \end{equation}
         then
   \begin{equation}\label{eq:A_N_scaling_homog_r}
       A^N(x)= A^1(Nx)
   \end{equation}

\end{lemma}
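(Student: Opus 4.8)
The plan is to prove \autoref{lem:A_N_rescaling} by a direct computation, unwinding the definitions in \eqref{eq:A_N_x} and the scaling convention \eqref{eq:r_scaled_definition}, and then specialising. The key observation is that $\sigma_k^N$ is built from $\nabla^\perp\psi$ by translating to $x_k^N = k/N$, rescaling the argument by $r$, and multiplying by $\theta_k^N$; under \eqref{ass 0} the weight $\theta_k^N = 1/N$ is exactly the factor needed to turn the $N$-lattice of patches back into the unit-lattice pattern evaluated at $Nx$, up to a rescaling of the patch radius by a factor of $N$.

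First I would write, for $k\in\Z^2$ and under \eqref{ass 0},
\begin{align*}
    \sigma_k^N(x) &= \frac{1}{N}\cdot\frac{1}{r}(\nabla^\perp\psi)\!\left(\frac{x - k/N}{r}\right) = \frac{1}{Nr}(\nabla^\perp\psi)\!\left(\frac{Nx - k}{Nr}\right) = (\nabla^\perp\psi)_{Nr}(Nx - k),
\end{align*}
where the middle equality multiplies numerator and denominator of the argument by $N$, and the last uses the notation \eqref{eq:r_scaled_definition} with scaling parameter $Nr$. Substituting this into \eqref{eq:A_N_x} gives
\begin{equation*}
    A^N(x) = \sum_{k\in\Z^2} \sigma_k^N(x)\otimes\sigma_k^N(x) = \sum_{k\in\Z^2} (\nabla^\perp\psi)_{Nr}(Nx - k)\otimes(\nabla^\perp\psi)_{Nr}(Nx - k),
\end{equation*}
which is precisely the right-hand side of \eqref{eq:A_N_scaling_gen_r}. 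To see that this equals $(A(Nx))_{Nr}$, I would note that with weights $\theta_k^1 \equiv 1$ and radius parameter $\rho$ in place of $r$, the definition of $A^1$ reads $A^1(y) = \sum_k (\nabla^\perp\psi)_\rho(y-k)\otimes(\nabla^\perp\psi)_\rho(y-k)$; one should be mildly careful here that the notation $(A(Nx))_{Nr}$ is shorthand for "$A$ built with patch-radius parameter $Nr$, evaluated at $Nx$", rather than a literal application of the rescaling operator $(\cdot)_{Nr}$ to the matrix-valued function $A$ — matching conventions with how $A^N$ versus $A^1$ were defined is the only place requiring attention.

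For the second claim, under \eqref{ass 1} we have both $\theta_k^N = 1/N$ and $r = 1/N$, so $Nr = 1$, and \eqref{eq:A_N_scaling_gen_r} collapses to
\begin{equation*}
    A^N(x) = \sum_{k\in\Z^2} (\nabla^\perp\psi)_1(Nx-k)\otimes(\nabla^\perp\psi)_1(Nx-k) = \sum_{k\in\Z^2} (\nabla^\perp\psi)(Nx-k)\otimes(\nabla^\perp\psi)(Nx-k) = A^1(Nx),
\end{equation*}
since $(\cdot)_1$ is the identity operation and the last sum is exactly $A^1$ evaluated at $Nx$ (recall $\sigma_k^1 = \nabla^\perp\psi(\cdot - k)$ when $\theta^1_k = 1$ and $r = 1$). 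I do not expect any genuine obstacle: the entire lemma is a bookkeeping exercise in the scaling conventions, and the identity \eqref{eq:r_scaled_vortex_bound} is not even needed for this particular statement. The only thing to get right is the algebra $\frac{1}{N}\cdot\frac{1}{r}f\!\big(\tfrac{x-k/N}{r}\big) = \frac{1}{Nr}f\!\big(\tfrac{Nx-k}{Nr}\big)$ and the consistent interpretation of the subscript notation for $A$.
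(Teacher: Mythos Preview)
Your proposal is correct and follows essentially the same direct-computation approach as the paper: rewrite $\sigma_k^N$ under \eqref{ass 0} as $(\nabla^\perp\psi)_{Nr}(Nx-k)$ by multiplying numerator and denominator of the argument by $N$, substitute into the definition of $A^N$, and then specialise to $Nr=1$ for \eqref{eq:A_N_scaling_homog_r}. Your remark on interpreting $(A(Nx))_{Nr}$ as ``$A$ built with patch radius $Nr$, evaluated at $Nx$'' is a helpful clarification that the paper leaves implicit.
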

\begin{proof}
By direct computation, in the first case when $\theta^N_{\,\cdot\,} = \frac{1}{N}$, we have 

    \begin{align*}
        A^N(x)&= \frac{1}{2}\sum_{k\in \Z^2}\frac{1}{N^2 r^2}\nabla^\perp \psi\left(\frac{x-k/N}{r}\right)\otimes \nabla^\perp \psi\left(\frac{x-k/N}{r}\right) \\
        &= \frac{1}{2}\sum_{k\in \Z^2}\left(\frac{1}{N r }\nabla^\perp \psi\left(\frac{Nx-k}{Nr}\right)\right)\otimes \left(\frac{1}{N r }\nabla^\perp \psi\left(\frac{Nx-k}{Nr}\right) \right) \\
       &= \sum_{k\in \Z^2}(\nabla^\perp \psi)_{Nr}({Nx-k})\otimes (\nabla^\perp \psi)_{Nr}({Nx-k}),
    \end{align*}
    which concludes the proof of \eqref{eq:A_N_scaling_gen_r}. It is then clear to see that \eqref{eq:A_N_scaling_homog_r} holds if $\theta^N_{\,\cdot\,}\equiv r= \frac{1}{N}$.
\end{proof}
\subsection{Solution Concepts and Basic Properties}
In order to analyse \eqref{eq:main_strat_N} and its scaling limit, we interpret it in its It\^o form, replacing the Stratonovich integral with an It\^o integral plus the It\^o-Stratonovich corrector. On a formal level, the corrector is given by the covariation
\begin{equation*}
    \frac{\sqrt{2}}{2}\sum_{k\in \Z^2}d[\sigma_k\cdot \nabla u^N, W^k]_t.
\end{equation*}
To formally compute this quantity, we appeal to the divergence free condition on $\sigma_k$, to get
\begin{equation*}
    d(\sigma_k\cdot \nabla u)= \operatorname{div}(\sigma_k du)= dV_t + \sqrt{2}\operatorname{div}((\sigma_k\otimes \sigma_l) \nabla u)dW_t^{l}
\end{equation*}
where $V_t$ is a process with bounded variation. Inserting this expression in the covariation above, and assuming that $r\le (2c)^{-1}$, we see that the It\^o formulation reads
\begin{equation}\label{eq:main_ito_N}
    d u^N_t = \sqrt{2}\sum_{k\in \Z^2} \sigma_{k}^N\cdot \grad u^N_t dW_t^{k} +\Big(\nabla\cdot (A^N\nabla u^N_t) + \kappa\Delta u_t^N \Big)dt 
\end{equation}
Owning to this formal computation, we are motivated to work with the following notion of a solution, working directly with the It\^o form.
\begin{definition}\label{def:ito_N_weak_sol}
    Given $u_0 \in L_0^2(\mbT^2)$ and $T>0$, we say that a process $u^N \in $  $C_\mbF(0,T;L^2_0(\mbT^2))\cap L_\mbF^2(0,T; \mcH^1_0(\mbT^2))$ is a weak solution of \eqref{eq:main_strat_N} on $[0,T]$ if, for every $\phi \in C^{\infty}(\mbT^2)$ it holds that
    \begin{equation}
        \brak{u_t^N, \phi} = \brak{u_0, \phi} + \kappa\int_0^t \brak{u^N_s, \nabla\cdot (\kappa I + A^N)\nabla\phi} ds  + \sum_{k \in \mbZ^2}\int_0^t \sqrt{2}\brak{u^N_s, \sigma_k^N \cdot \nabla \phi}dW^k_s, \quad \mbP\text{-a.s.}
    \end{equation}
\end{definition}
\begin{remark}[Mean Free Test Functions]\label{rem:mean_free_test}
    Note that since \eqref{eq:main_strat_N} and \eqref{eq:main_ito_N} are in divergence form, it directly follows that any weak solution, in the sense of \autoref{def:ito_N_weak_sol} satisfies the identity
    \begin{equation}
        \langle u^N_t,1\rangle_{L^2_x} = \langle u_0,1\rangle_{L^2_x}.
    \end{equation}
    Hence, it is not restriction to additionally require the test function $\phi$ in \autoref{def:ito_N_weak_sol} to be mean free. We will do this consistently from now on.
\end{remark}
An application of the It\^o formula gives the following (see for instance \cite[Thm.~1.2]{flandoli_galeati_luo_22_eddy} \cite[Thm.~1.2]{flandoli_luongo_22_channel} and \cite[Thm.~5.24]{flandoli_spdes}):
\begin{proposition}[Energy Estimate]\label{prop:ito_N_energy}
    Let $u_0 \in L^2_0(\mbT^2)$ and $T>0$. Then, there exists a unique solution to \eqref{eq:main_strat_N} in the sense of \autoref{def:ito_N_weak_sol}.
    Moreover it holds that
    \begin{equation}\label{eq:ito_N_energy}
        \|u^N_T\|^2_{ L^2_x} + 2\kappa \int_0^T \|\nabla u^N_s\|^2_{L^2_x} \dd s = \|u_0\|^2_{L^2_x}, \quad \mbP\text{-a.s.}
    \end{equation}
\end{proposition}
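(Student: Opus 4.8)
The statement combines two things: well-posedness of the weak formulation in Definition \ref{def:ito_N_weak_sol}, and the energy identity \eqref{eq:ito_N_energy}. My plan is to treat this as a fairly standard linear SPDE well-posedness result, since the martingale term is transport-type (hence norm-preserving up to the corrector) and the corrector plus molecular diffusion give a genuinely parabolic operator $\nabla\cdot((\kappa I + A^N)\nabla\,\cdot\,)$, which is uniformly elliptic with ellipticity constant at least $\kappa > 0$ because $A^N(x)$ is positive semi-definite everywhere. So the deterministic part alone already provides the coercivity needed for a variational (Lions-type) or Galerkin argument.

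For \emph{existence}, I would run a Galerkin scheme: project onto $\mathrm{span}\{e_{2\pi i m\cdot x} : 0 < |m| \le M\}$ (mean-free Fourier modes, consistent with Remark \ref{rem:mean_free_test}), obtaining a finite-dimensional linear SDE with smooth bounded coefficients (smoothness and boundedness of $\sigma_k^N$ and $A^N$ on $\mbT^2$ follow from $\psi \in C^\infty$ with compact support and the finiteness of the number of patches intersecting $\mbT^2$). This SDE has a unique global strong solution $u^{N,M}$. Applying Itô's formula to $\|u^{N,M}_t\|_{L^2_x}^2$: the drift contributes $-2\kappa\|\nabla u^{N,M}\|_{L^2_x}^2 - 2\langle A^N\nabla u^{N,M}, \nabla u^{N,M}\rangle \le -2\kappa\|\nabla u^{N,M}\|_{L^2_x}^2$ (using $A^N \ge 0$ and integration by parts), the Itô correction from the martingale term contributes $+2\sum_k \|\sigma_k^N\cdot\nabla u^{N,M}\|_{L^2_x}^2 = 2\langle A^N\nabla u^{N,M},\nabla u^{N,M}\rangle$ (the key transport-noise cancellation: the quadratic variation of the stochastic integral exactly matches the extra drift coming from the corrector, because $A^N(x) = \sum_k \sigma_k^N(x)\otimes\sigma_k^N(x)$), and the stochastic term is a martingale after localisation. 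The two $\langle A^N\nabla u, \nabla u\rangle$ terms do not cancel against each other directly — rather, the diffusion term $\nabla\cdot(A^N\nabla u)$ in \eqref{eq:main_ito_N} cancels \emph{half} of the Itô correction; I should double-check the constants ($\sqrt 2$ normalisation) so that what remains is exactly $\|u^{N,M}_t\|_{L^2_x}^2 + 2\kappa\int_0^t\|\nabla u^{N,M}_s\|_{L^2_x}^2\,ds = \|P_M u_0\|_{L^2_x}^2$, which is \eqref{eq:ito_N_energy} at the Galerkin level. This gives uniform bounds in $C_\mbF(0,T;L^2_0)\cap L^2_\mbF(0,T;\mcH^1_0)$.

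Passing to the limit $M\to\infty$: extract a weakly-* convergent subsequence in $L^2_\mbF(0,T;\mcH^1_0)$ and weakly convergent in $L^2(\Omega\times[0,T];L^2_0)$; linearity of all terms (including the stochastic integral, using that the integrands $\langle u^{N,M}_s, \sigma_k^N\cdot\nabla\phi\rangle$ converge and the number of relevant $k$ is finite for fixed $\phi$) lets me identify the limit as a weak solution, and lower semicontinuity of norms under weak convergence upgrades the Galerkin energy identity to the inequality $\le$ for the limit; time-continuity and the reverse inequality (hence equality) follow from a standard Itô-formula/Gelfand-triple argument applied to the limiting equation itself (e.g.\ as in \cite{flandoli_spdes}). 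For \emph{uniqueness}, take the difference $v = u^N - \tilde u^N$ of two solutions with the same data; it solves the same equation with zero initial condition, and the energy identity gives $\|v_t\|_{L^2_x}^2 + 2\kappa\int_0^t\|\nabla v_s\|^2\,ds = 0$, so $v \equiv 0$. The main obstacle — really the only non-routine point — is bookkeeping the Itô–Stratonovich correction precisely enough that the quadratic-variation term of the martingale is seen to be exactly (twice) the $A^N$-drift, so that in the energy balance only the genuinely dissipative $\kappa$-term survives; once the constants are pinned down this is the transport-noise miracle and everything else is the standard linear SPDE machinery. Given the statement explicitly cites \cite[Thm.~1.2]{flandoli_galeati_luo_22_eddy}, \cite[Thm.~1.2]{flandoli_luongo_22_channel}, \cite[Thm.~5.24]{flandoli_spdes}, I would in fact just invoke those after checking our hypotheses (smooth, divergence-free, finitely-supported-on-$\mbT^2$ coefficients) match theirs.
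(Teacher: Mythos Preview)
Your proposal is correct and matches the paper's approach exactly: the paper simply cites \cite[Thm.~1.2]{flandoli_galeati_luo_22_eddy}, \cite[Thm.~1.2]{flandoli_luongo_22_channel} and \cite[Thm.~5.24]{flandoli_spdes}, which carry out precisely the Galerkin-plus-It\^o-formula argument you sketch. One small correction to your bookkeeping: with the $\sqrt{2}$ normalisation the drift contribution $-2\langle A^N\nabla u,\nabla u\rangle$ and the quadratic-variation contribution $+2\langle A^N\nabla u,\nabla u\rangle$ cancel \emph{exactly} (not ``half''), and the stochastic term $2\sqrt{2}\sum_k\langle u,\sigma_k^N\cdot\nabla u\rangle\,dW^k$ is not merely a local martingale but \emph{identically zero} by the divergence-free property of $\sigma_k^N$---this is what yields the energy identity $\mbP$-a.s.\ rather than only in expectation.
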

\subsection{Discussion of the Scaling Limit}\label{subsec: discussion}
The first goal of this work is to find suitable scaling regime, in terms of the coefficients $\{\theta^N\}_{N\geq 1}$ and the scaling parameter $r\coloneqq r(N)>0$, such that any sequence of solutions $\{u^N\}_{N\in \mbN}$ to \eqref{eq:main_strat_N} has a meaningful limit as $N\to \infty$. As $N\to \infty$ the distance between the patch centres converges to zero and so we must send both their effective radii $r$ and their intensities, $\{\theta^N_k\}_{k\in \mbZ^2}$ to zero. By scaling these quantities in an appropriate way (\emph{c.f.} Assumption \ref{ass 1}), we will obtain an effective description of the stochastic system in terms of a deterministic parabolic one, in the large $N$ limit. This is done in \autoref{sec: scaling}

The second goal of this work is to investigate the effective diffusivity of this limiting parabolic equation, in particular its behaviour as the molecular diffusivity $\kappa$ is sent to $0$.
In order to shorten the notation, introduce the elliptic operator 
\begin{equation*}
    \mcL^N_\kappa f = \operatorname{div}\left( (\kappa I + A^N(x) \nabla) f\right).
\end{equation*}
As has been well understood in previous works, \cite{galeati_20_convergence,flandoli_galeati_luo_21_delayed,flandoli_luo_21_highmode,flandoli_galeati_luo_22_eddy}, there are two main quantities, derived from parameters of the noise which govern any suitable scaling limit.
\begin{itemize}
    \item The quantity $\eps_N$ that governs the stochastic fluctuations of the solution, which has to be made small (\textit{c.f.} \autoref{lem:A_N_op_est} )
    \begin{equation}\label{eq:eps_N_def}
        \eps_N:= \sup\left\{ \int\int_{\T^4} (\nabla v(x))^t A^N(x,y)\nabla v(y) \dd x \dd y: \  v\in \H^1(\T^2),\, \|\nabla v\|_{L^2}=1\right\}.
    \end{equation}
    \item The quantity $\pi_N^\kappa$, which is the first eigenvalue of the operator $-\mcL ^N_\kappa$ and governs the dissipation of the deterministic part of the It\^o equation
    \begin{equation}\label{eq:pi_N_def}
        \pi_N^\kappa\coloneqq  \inf\left\{ \frac{\int _{\T^2}(\nabla v)^\top(x) (\kappa I +A^N(x,x)) \nabla v(x)\dd x}{\int_{\T^2}|v(x)|^2  \dd x}: \ v\in \dot\H^1, v \neq 0\right\}.
    \end{equation}
    To have a non trivial scaling limit, as in \cite{galeati_20_convergence,flandoli_galeati_luo_22_eddy}, we need to keep $\pi_N^\kappa$ of fixed magnitude while $\eps_N \searrow 0$ (\emph{c.f.} \cref{sec:pde_homogenisation} ).
\end{itemize}
\begin{remark}\label{rem:unif_ell}

It is clear that if we had access to a uniform ellipticity inequality, of the form 
\begin{equation}\label{eq:uniform_A_N_ellipticity}
    \inf_{x\in \T^2} \inf _{\xi\neq 0} \frac{\xi^T A^N(x)\xi^T}{|\xi|^2} \ge M
\end{equation}
then we would immediately have $\pi_N^\kappa \ge C(\kappa+M)$, where $C$ is the Poincaré constant of $\T^2$. 
To the best of the authors' knowledge, all previous works concerning models of this type have been set-up such that  \eqref{eq:uniform_A_N_ellipticity} holds, \cite{galeati_20_convergence,flandoli_galeati_luo_21_delayed,flandoli_galeati_luo_21_mixing,flandoli_galeati_luo_22_eddy}. One useful consequence of our approach via homogenization theory is that it allows us to also study cases when \eqref{eq:uniform_A_N_ellipticity} does not apply, see \autoref{lem:A_cases} and \autoref{rem:A_kernel}.
\end{remark}
\begin{remark}
    Finally, we have to keep in mind that for fixed $N$, even if the new operator was uniformly elliptic, or more in general if it holds $\pi_N^\kappa > m \ge 0$, it would not be evident a priori that the noise in  \eqref{eq:main_strat_N} increases the dissipation rate of the $L^2$ norm of the solution. Indeed, applying It\^o's formula to $\|u^N_t\|_{L^2_x}^2$, the conservative nature of the Stratonovich transport noise gives the same energy balance as for deterministic heat equation $\partial_t u = \kappa \Delta u$. We do not deal with the problem of the convergence of the kinetic energy profile of the solution, which was only very recently tackled in \cite{agresti2024AnomalousDissipationInduced} with tools from stochastic maximal $L^p$-regularity theory; we content ourself with a weaker mixing estimate, identifying a limit \emph{effective diffusivity} coefficient $\pi^\kappa = \lim_{N\rightarrow\infty} \pi_N^\kappa$ and giving a comprehensive analysis of the behaviour of this coefficient across different regimes, hoping in future to close the gap, proving also convergence of the energy profile. 
\end{remark}
\subsection{Main Results}
In this section we state for the reader's convenience the main results of the manuscript. The first one concerns the scaling limit of the SPDE \eqref{eq:main_ito_N} to an homogenized limit, it collects \autoref{prop:weak_conv}, \autoref{cor: diss_hom} and \autoref{prop: diagonality}. 

\begin{theorem}\label{th:main_introduction}
    Let $u_0 \in L^2_0(\mbT^2)$ and $T>0$. For every $c, \kappa >0$, there exists a constant $C(c, \kappa)\ge \kappa$ such that if $\bar{u}$ is the weak solution to 
    $$\partial_t \bar u(t,x) = C(c,\kappa)\Delta \bar u(t,x), \qquad \bar u(0,x)= u_0.$$
     Then, under the scaling assumptions $\theta^N_{\,\cdot\,}\equiv r =\nicefrac{1}{N}$, if $u_t^N$ is the unique weak solution to \eqref{eq:main_ito_N}  on $[0,T]$ with initial condition $u_0$, then, for every $\phi \in C^\infty(\T^2)$, it holds that
    \begin{equation}\label{convergence N_main}
        \lim_{N\rightarrow +\infty} \sup_{t\in [0, T]}  \EE\left[\left|\brak{u^N_t- \bar{u}_t, \phi}\right|^2\right] = 0. 
    \end{equation}
    Moreover, for any $\eps>0$ there exists an $N\coloneqq N(\eps)\in \mbN$ large enough such that for all  $\phi \in C^\infty(\mbT^2)$ and $t>0$
   \begin{equation*}
       \EE \left[\left|  \langle u^N(t,\,\cdot\,),\phi\rangle \right|^2\right] \le 2\left(\eps + \exp (-2C(c, \kappa)t)\right) \expt{\|u^N_0\|^2_{L^2}}.
   \end{equation*}
\end{theorem}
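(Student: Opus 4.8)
The plan is to combine the Itô-Stratonovich reformulation from \eqref{eq:main_ito_N} with a classical homogenisation argument for the time-independent operator $\mcL^N_\kappa = \operatorname{div}((\kappa I + A^N)\nabla\,\cdot\,)$, and then control the martingale part using the smallness of $\eps_N$. By \autoref{lem:A_N_rescaling}, under the scaling $\theta^N_{\,\cdot\,}\equiv r = 1/N$ we have $A^N(x) = A^1(Nx)$, so the deterministic part of \eqref{eq:main_ito_N} is precisely of the form \eqref{abs_ell} with $M(x) = \kappa I + A^1(x)$. Hence the natural candidate for the limit is the homogenised equation \eqref{abs_ell_intro_parab} with $\bar M = \overline{\kappa I + A^1}$; the content of \autoref{prop: diagonality} (which I am told to assume) is that this $\bar M$ is a scalar multiple $C(c,\kappa)I$ of the identity, and \autoref{cor: diss_hom} identifies $C(c,\kappa) = \pi^\kappa = \lim_N \pi_N^\kappa \ge \kappa$, using \autoref{lem:homogenised_upper_lower_symmetric}\ref{it:bar_H_kappa_lower_bnd}.

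\textbf{First} I would establish the convergence \eqref{convergence N_main}. The clean way is to split $u^N_t = v^N_t + z^N_t$, where $v^N$ solves the deterministic parabolic problem $\partial_t v^N = \mcL^N_\kappa v^N$ with $v^N_0 = u_0$, and $z^N = u^N - v^N$ carries the stochastic fluctuation. For $v^N$, the PDE-homogenisation statement (stated in this setting as \autoref{prop:pde_homogenisation}) gives $v^N \to \bar u$ in $L^2(0,T;L^2(\mbT^2))$, which after testing against $\phi$ and using the uniform energy bound \eqref{eq:ito_N_energy} upgrades to $\sup_{t}\EE[|\langle v^N_t - \bar u_t,\phi\rangle|^2]\to 0$ (the time-uniformity coming from equicontinuity of $t\mapsto \langle v^N_t,\phi\rangle$, or more simply from the analyticity of the semigroups and an argument on the resolvents). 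For $z^N$, I would test the equation for $z^N$ against $\phi$, write the resulting SDE for $\langle z^N_t,\phi\rangle$, and apply Itô's formula to $|\langle z^N_t,\phi\rangle|^2$; the drift is controlled by $\pi^\kappa_N$-type dissipativity while the martingale bracket is bounded by $\eps_N\|u^N\|^2_{L^2}\|\nabla\phi\|^2_{L^2}$ via \eqref{eq:eps_N_def} (this is essentially \autoref{lem:A_N_op_est}); since $\eps_N\searrow 0$ under \eqref{ass 1}, a Grönwall argument gives $\sup_t \EE[|\langle z^N_t,\phi\rangle|^2]\to 0$. Combining the two pieces yields \eqref{convergence N_main}; I expect this is packaged as \autoref{prop:weak_conv}.

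\textbf{Next}, the quantitative exponential bound. Here I would argue directly on $\EE[|\langle u^N_t,\phi\rangle|^2]$, not via the splitting. Write $\langle u^N_t,\phi\rangle$ using \autoref{def:ito_N_weak_sol}, apply Itô to its square, take expectations, and use: (i) the covariation term is $\le \eps_N\|u^N_t\|^2_{L^2}\|\nabla\phi\|^2_{L^2}$; (ii) the drift term, after an integration by parts, is $-2\EE[\langle \nabla u^N_t, (\kappa I + A^N)\nabla\phi\rangle]$ — but to get a \emph{closed} differential inequality one should instead compare with the limiting generator: replace $\phi$ by the solution $\phi^N$ of the stationary dual problem or, more cheaply, use that for the relevant low-frequency test functions the dissipation rate is essentially $\pi^\kappa_N$. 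Concretely, the cleanest route is: by \eqref{convergence N_main} choose $N$ large so that $\sup_t\EE[|\langle u^N_t - \bar u_t,\phi\rangle|^2]\le \eps\,\EE[\|u^N_0\|^2_{L^2}]$; then since $\bar u$ solves the heat equation with coefficient $C(c,\kappa)$, spectral decomposition gives $|\langle\bar u_t,\phi\rangle|^2 \le e^{-2C(c,\kappa)t}\|\bar u_0\|^2_{L^2}\|\phi\|^2_{L^2}$ (absorbing $\|\phi\|_{L^2}$, or normalising $\phi$), and $\|\bar u_0\|_{L^2} = \|u^N_0\|_{L^2} = \|u_0\|_{L^2}$. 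The elementary inequality $|a|^2 \le 2|a-b|^2 + 2|b|^2$ with $a = \langle u^N_t,\phi\rangle$, $b = \langle\bar u_t,\phi\rangle$ then gives exactly
\begin{equation*}
    \EE\left[\left|\langle u^N_t,\phi\rangle\right|^2\right] \le 2\left(\eps + e^{-2C(c,\kappa)t}\right)\EE\left[\|u^N_0\|^2_{L^2}\right],
\end{equation*}
as claimed.

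\textbf{The main obstacle} is the homogenisation step for $v^N$ — in particular making it quantitative and time-uniform, and the identification $\bar M = C(c,\kappa)I$. Standard periodic homogenisation (\autoref{prop:pde_homogenisation}) is only qualitative in $L^2(0,T;L^2)$; promoting to $\sup_t$ of the tested quantity needs either a rate of convergence or a compactness/equicontinuity argument, and care is required because in the sparse regime $c \le \sqrt5/2$ the matrix $A^1$ is not uniformly elliptic — however $\kappa I + A^1 \ge \kappa I$ \emph{is} uniformly elliptic, so the classical theory of \autoref{lem:homogenised_upper_lower_symmetric} applies with $\lambda = \kappa$, and this is exactly why $C(c,\kappa)\ge\kappa$. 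The diagonality $\bar M = C(c,\kappa) I$ should follow from the rotational symmetry of the configuration of patches (the $\mbZ^2$-lattice with radial $\psi$ is invariant under the $\pi/2$ rotation, which conjugates the corrector problem for $e_1$ into that for $e_2$), and I would cite \autoref{prop: diagonality} for it. The martingale estimate and the Grönwall step are routine given \autoref{lem:A_N_op_est} and $\eps_N\to0$.
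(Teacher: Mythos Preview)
Your overall architecture matches the paper exactly: split $u^N = \tilde u^N + D^N$ with $\tilde u^N$ the deterministic solution of $\partial_t\tilde u^N = \mcL^N_\kappa\tilde u^N$, homogenise $\tilde u^N\to\bar u$, and show the stochastic remainder $D^N\to 0$ weakly. Your derivation of the exponential bound via $|a|^2\le 2|a-b|^2+2|b|^2$ with $b=\langle\bar u_t,\phi\rangle$ is precisely the paper's \autoref{cor: diss_hom}, and your remarks on upgrading the homogenisation limit to $\sup_t$ are correct in spirit (the paper uses Aubin--Lions to get precompactness in $C([0,T];\mcH^{-\eps})$, \autoref{H-}, which is your equicontinuity argument made precise).

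There is, however, a real gap in your treatment of $z^N$. Applying It\^o to $|\langle z^N_t,\phi\rangle|^2$ produces a drift $2\langle z^N_s,\phi\rangle\langle z^N_s,\mcL^N_\kappa\phi\rangle$, and this is \emph{not} controlled by ``$\pi^\kappa_N$-type dissipativity'': that would require $\phi$ to be an eigenfunction of $\mcL^N_\kappa$, which it is not (and the eigenfunctions vary with $N$). A crude bound leaves you with $\|\mcL^N_\kappa\phi\|_{L^2}$, which blows up like $N$ since $A^N(x)=A(Nx)$ forces $\|\nabla A^N\|_{L^\infty}\sim N$; after one integration by parts you get $\|\nabla z^N_s\|_{L^2}$, which is only bounded in $L^2_t$ and does not close a Gr\"onwall inequality. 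The alternative of applying It\^o to $\|z^N_t\|^2_{L^2}$ does give a dissipative drift, but then the quadratic variation is $\sum_k\|\sigma_k\cdot\nabla u^N\|^2_{L^2}$, which is order one, not $\eps_N$.

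The paper avoids this entirely by using the \emph{mild} formulation (\autoref{prop:A_N_semigroup}~\ref{it:A_N_semigroup_3}): subtracting mild forms leaves $\langle D^N_t,\phi\rangle$ as a pure stochastic integral with \emph{no drift},
\[
\langle D^N_t,\phi\rangle = \sqrt{2}\sum_k\int_0^t\big\langle e^{(t-s)\mcL^N_\kappa}(\sigma_k\cdot\nabla u^N_s),\phi\big\rangle\,dW^k_s.
\]
Then It\^o isometry, self-adjointness of the semigroup (moving $e^{(t-s)\mcL^N_\kappa}$ onto $\phi$), and \autoref{lem:A_N_op_est} applied to $v=(e^{(t-s)\mcL^N_\kappa}\phi)\nabla u^N_s$ give directly
\[
\EE\big[|\langle D^N_t,\phi\rangle|^2\big]\lesssim r^2\sup_{s\le t}\big\|e^{(t-s)\mcL^N_\kappa}\phi\big\|^2_{L^\infty}\int_0^t\EE\|\nabla u^N_s\|^2_{L^2}\,ds\lesssim \frac{r^2}{\kappa}\|\phi\|^2_{L^\infty}\|u_0\|^2_{L^2},
\]
using $L^\infty$-contractivity of the divergence-form semigroup and the energy identity \eqref{eq:ito_N_energy}. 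This is \autoref{prop:weak_conv}; the point is that the semigroup absorbs the drift for free, which your Gr\"onwall scheme cannot do.
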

After having established this convergence, we study the effective diffusivity constant $C(c, \kappa)$ in various regimes of the radius $c>0$ (including those for which $A^N(x)$ is degenerate) and $\kappa \ll 1$. 
\begin{theorem}\label{thm: effective_bounds_intro} Under the same assumptions of the previous theorem the following statements hold:
\begin{enumerate}[label=\roman*)]
    \item \label{it:bounds_1_intro} If $c\in (0,\nicefrac{1}{2})$ (\textit{i.e.} the vorticity patches are completely separated) there exists a constant $L>0$, depending only on $c$ such that 
    \begin{equation}
      \kappa \leq  C(c, \kappa) \leq L\kappa \quad \text{ for all }\kappa >0.
    \end{equation}
    In particular, the constant $L$ does not depend on the specific choice of the radial function $\psi$. 
    \item \label{it:bounds_2_intro} If $c= \nicefrac{1}{2}$ then for  any $n\geq \nicefrac{7}{2}$ there exists a $p\coloneqq p(n)\geq 1$ and a constant $L'\coloneqq L'(c,\|\psi\|_{W^{p+1,\infty}_x})>0$ such that
\begin{equation}
      \kappa \leq  C(c, \kappa) \leq L'(\kappa + \kappa^{1-\frac{1}{n}}) \quad \text{for all } \kappa >0.
    \end{equation}
    \item \label{it:bounds_3_intro} If $c \ge \sqrt{2}/2$, then there exists a constant $m>0$ such that
\begin{equation}
    C(c, \kappa) \geq \kappa + m \quad \text{ for all $\kappa >0$.}
\end{equation}
    Moreover, in this case, the constant $m$ can be made arbitrarily large by a suitable choice of the stream function $\psi$ (see \autoref{rem:bounds_scaling} below).
\end{enumerate}
\end{theorem}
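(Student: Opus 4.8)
The plan is to identify $C(c,\kappa)$ with a homogenised diffusion coefficient and then estimate it by variational tools. Under the scaling \eqref{ass 1}, \autoref{lem:A_N_rescaling} gives $A^N(x)=A(Nx)$, so by \autoref{prop:pde_homogenisation} the effective coefficient is the homogenised matrix $\bar M$ of the smooth periodic uniformly elliptic field $M(x):=\kappa I+A(x)$, and by \autoref{prop: diagonality} — the $\tfrac{\pi}{2}$--rotation and axis--reflection symmetries of the square lattice together with radiality of $\psi$ — one has $\bar M=C(c,\kappa)\,I$. Throughout I use the two variational characterisations of $\bar M$ set up in \autoref{sec:diffusivity}: for every unit vector $\xi$, with $\phi$ ranging over $\dot\H^1(\mbT^2)$,
\begin{equation*}
  C(c,\kappa)=\min_{\phi}\int_{\mbT^2}(\xi+\nabla\phi)\cdot M\,(\xi+\nabla\phi)\dd x,
  \qquad
  \frac{1}{C(c,\kappa)}=\min\Bigl\{\int_{\mbT^2}\sigma\cdot M^{-1}\sigma\dd x:\ \nabla\cdot\sigma=0,\ \textstyle\int_{\mbT^2}\sigma=\xi\Bigr\}.
\end{equation*}
The bound $C(c,\kappa)\ge\kappa$ is then immediate from the first formula, since $A\ge0$ and $\int_{\mbT^2}\nabla\phi=0$ force the integrand to be $\ge\kappa|\xi|^2$ (it is also \autoref{lem:homogenised_upper_lower_symmetric}\ref{it:bar_H_kappa_lower_bnd} with $\lambda=\kappa$). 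What is left are the matching upper bounds in \ref{it:bounds_1_intro}, \ref{it:bounds_2_intro} and the improved lower bound in \ref{it:bounds_3_intro}.

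\emph{Item \ref{it:bounds_1_intro}.} For $c<\tfrac12$ the translated supports of the patches are pairwise disjoint, so $A$ is supported in one ball and $M=\kappa I$ on an open vacuum $V$ which contains a vertical and a horizontal slab, each of width $1-2c$. In the dual formula I bound $\sigma\cdot M^{-1}\sigma\ge\kappa^{-1}|\sigma|^2$ on $V$ and $\ge0$ elsewhere, so $C(c,\kappa)^{-1}\ge\kappa^{-1}\inf\{\int_V|\sigma|^2:\ \nabla\cdot\sigma=0,\ \int_{\mbT^2}\sigma=\xi\}$. Since $\nabla\cdot\sigma=0$ and periodicity make the flux of $\sigma$ across each hyperplane $\{x_i=\mathrm{const}\}$ equal to $\xi_i$, Cauchy--Schwarz in the transverse variable and integration over the two slabs give $\int_V|\sigma_1|^2\ge(1-2c)\xi_1^2$ and $\int_V|\sigma_2|^2\ge(1-2c)\xi_2^2$, hence $\int_V|\sigma|^2\ge1-2c$. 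This yields $C(c,\kappa)\le(1-2c)^{-1}\kappa$, with $L:=(1-2c)^{-1}$ depending only on $c$.

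\emph{Item \ref{it:bounds_2_intro}.} At $c=\tfrac12$ the vacuum $V$ still has positive measure but its wrap--around necks pinch to zero width at the four points where neighbouring patches touch, so the slab argument collapses. Instead I would build a competitor $\phi$ in the Dirichlet formula: let $\xi+\nabla\phi$ equal $\xi$ on the bulk of $V$, be essentially radial inside the patch (so that the advective term $|\nabla^\perp\psi\cdot(\xi+\nabla\phi)|^2$ stays small), and be interpolated across a tube of width $\delta$ around each neck, via a cut--off of polynomial degree $p+1$ adapted to the pinched geometry. The bulk contributes $O(\kappa)$, while the tube contributes a diffusive squeeze term $\sim\kappa\,\delta^{-a}$ and a residual advective term $\sim\|\psi\|_{W^{p+1,\infty}_x}\,\delta^{b}$, with exponents $a,b$ fixed by the order of vanishing of the profile at $\partial B(0,\tfrac12)$; optimising $\delta=\kappa^{1/(a+b)}$ yields $C(c,\kappa)\le L'(\kappa+\kappa^{1-1/n})$ with $\tfrac1n=\tfrac a{a+b}$, which is pushed below $\tfrac27$ by enlarging $p$ (hence $n$). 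I expect this step to require the most work: the competitor must be glued across each pinch with both competing norms controlled, and it is precisely this gluing that forces the dependence of $L'$ on $\|\psi\|_{W^{p+1,\infty}_x}$ and the restriction $n\ge\tfrac72$.

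\emph{Item \ref{it:bounds_3_intro}.} For $c\ge\tfrac{\sqrt2}{2}$ the balls $B(k,c)$ cover $\mbT^2$. The map $M\mapsto\bar M$ is superadditive (clear from the Dirichlet formula) and $\overline{\kappa I}=\kappa I$, so $\bar M=\overline{\kappa I+A}\ge\kappa I+\bar A$ with $\bar A=m\,I$ the homogenised matrix of $A$ alone (again by symmetry); hence it suffices to prove $m>0$, and then $C(c,\kappa)\ge\kappa+m$. If $m=0$, a relaxation argument — equivalently, passing to the limit $\kappa\downarrow0$ in $\overline{\kappa I+A}$ — produces $\phi$ with $\xi+\nabla\phi\in\ker A(x)$ for a.e.\ $x$. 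On the region covered by a single patch (which contains the non-empty disk $B(k,1-c)$) one has $A(x)=|f'(|x-k|)|^2\,|x-k|^{-2}\,(x-k)^\perp\otimes(x-k)^\perp$, so $\ker A(x)=\mbR(x-k)$ and $\xi+\nabla\phi$ is forced to be radial about $k$ there; on the overlap regions $A$ is positive definite off the (null) union of lines through pairs of lattice points, so $\xi+\nabla\phi=0$ there. Because $\mbT^2$ is covered, matching along the common boundaries forces $\xi+\nabla\phi=\nabla R$ for a periodic function $R$ (a sum of radial bumps near the lattice points), i.e.\ $\phi=-\xi\cdot x+R+\mathrm{const}$, which is not periodic unless $\xi=0$ — a contradiction; a compactness argument then turns $m>0$ into the quantitative $C(c,\kappa)\ge\kappa+m$. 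Finally, $A$ is quadratic in $\psi$ and $A\mapsto\bar A$ is linear, so replacing $\psi$ by $\Lambda\psi$ sends $m\mapsto\Lambda^2 m\to\infty$. The genuine obstacle here is that $A$ is degenerate (rank one) on the \emph{positive--measure} set $\bigcup_k B(k,1-c)$, so $m>0$ cannot come from a uniform ellipticity estimate; it must instead be extracted from the rigidity of radial gradient fields on the punctured patches — a two--dimensional topology/capacity phenomenon — which I expect to be the technical heart of the item.
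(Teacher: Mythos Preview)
Your item \ref{it:bounds_1_intro} is correct and in fact cleaner than the paper's argument. The paper works with the primal (Dirichlet) variational formula and plugs in an explicit piecewise--linear competitor $\phi$ with $\nabla\phi=-e_1$ on the patch supports, obtaining $C(c,\kappa)\le\kappa\bigl(1+2c+8c^2/(1-2c)^2\bigr)$. You instead invoke the dual (Thomson) formula and lower--bound $\int\sigma\cdot M^{-1}\sigma$ via the flux constraint on the vacuum slabs, arriving at the sharper constant $L=(1-2c)^{-1}$. Amusingly, the flux identity you use here is exactly what the paper employs (as its Lemma~4.8) in the proof of item \ref{it:bounds_3_intro}, not item \ref{it:bounds_1_intro}. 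Your sketch for item \ref{it:bounds_2_intro} is the same strategy the paper follows: a $\delta$--tube competitor in the Dirichlet formula, a Taylor expansion of $\nabla^\perp\psi$ to order $p$ at the pinch points (all derivatives vanish on $\partial B(0,\tfrac12)$), and optimisation in $\delta$; the relation $n=(p+6)/2$ is where $n\ge\tfrac72$ comes from.

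Your item \ref{it:bounds_3_intro} has a genuine gap. The superadditivity reduction to $m:=\inf_{\phi}\int(\xi+\nabla\phi)\cdot A(\xi+\nabla\phi)>0$ is fine (and ``$A\mapsto\bar A$ is linear'' should read ``positively homogeneous'', but you only use homogeneity so the scaling conclusion stands). The problem is the relaxation step: from $m=0$ you cannot in general produce a $\phi$ with $\xi+\nabla\phi\in\ker A$ a.e. The functional $\mcE_0$ is not coercive in $\dot\H^1$ precisely because $A$ has a one--dimensional kernel on the positive--measure set $\bigcup_k B(k,1-c)$; along a minimising sequence the radial component of $\nabla\phi_n$ is completely uncontrolled there, so no $H^1$--weak limit need exist. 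Passing to the limit $\kappa\downarrow0$ in the corrector $u^*_\kappa$ does not help either: $\kappa\|\nabla u^*_\kappa\|_{L^2}^2\le C(c,\kappa)$ is compatible with $\|\nabla u^*_\kappa\|_{L^2}\to\infty$ (and the paper's numerics indicate this blow--up does occur in the range $\sqrt2/2<c<\sqrt5/2$). Thus the rigidity argument on $\ker A$, while correct for an actual minimiser, never gets off the ground.

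The paper sidesteps this by avoiding any limit extraction. It observes that for $c>\sqrt2/2$ there is a full horizontal strip $S_c$ on which $A(x)$ has full rank, so $e_1=A(x)w(x)$ with $w$ bounded on $S_c$. For any minimising sequence $F_n=\xi+\nabla\phi_n$ one then has $\int_{S_c}\chi(x_2)\,F_n\cdot e_1\,\dd x\le\|A^{1/2}F_n\|_{L^2}\|A^{1/2}(\chi w)\|_{L^2}\to0$, while the flux identity forces the same integral to equal $\xi_1\int\chi>0$: contradiction, hence $m>0$. This is really the same topological obstruction you anticipate (a loop through the full--rank region), but implemented directly on the sequence rather than on a putative minimiser.
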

We remark that the proof of \autoref{thm: effective_bounds_intro} \ref{it:bounds_3_intro} is actually split into two sub-regimes. Firstly, when $c> \sqrt{5}/2$, the proof is a refinement of the argument in \cite{flandoli_galeati_luo_22_eddy}, which uses that $A^N(x)$ is uniformly elliptic in this regime. Secondly, in the novel and more difficult case  $\nicefrac{\sqrt{2}}{2}\le c \le \sqrt{5}/2$, where $A^N(x)$ has some degeneracies (\emph{c.f.} \autoref{lem:A_cases}), we employ variational arguments coming from the theory of elliptic homogenisation to identify the given lower bound. \\

Finally, in the last part of the work, we present numerical experiments conducted in order to quantify the asymptotic behaviour of $C(c, \kappa)$ for $\kappa \rightarrow 0$, particularly in the regime $c \in (\nicefrac{1}{2},\nicefrac{\sqrt{2}}{2})$ where \autoref{thm: effective_bounds_intro} does not give a conclusive answer.
The main achievement of the numerical analysis predicts that for small values of $\kappa$, the additional diffusivity behaves like $$C(c,\kappa) -\kappa \approx m + \kappa^\alpha,$$ for $m$ and $\alpha$ depending on $c$ in the following way:
\begin{itemize}
    \item In the regime $0<c < \nicefrac{1}{2}$, $m\approx 0$ and $\alpha \approx 1$.
    \item When $c= \nicefrac{1}{2}$ $m \approx 0$ and $\alpha <1$.
    \item In the regime $1/2 < c < \sqrt{2}/2$, $m\approx 0$, while $\alpha\in (0,1)$ (it is not compatible with the endpoints $0$ or $1$).
    \item In the regime $\sqrt{2}/2 < c < \sqrt{5}/2$, $\alpha\in (0,1)$ (it is not compatible with the endpoints $0$ or $1$) while the value of the intercept $m$ is strictly positive.
    \item In the regime $c > \sqrt{5}/2$, 
    $\alpha \approx 1$ and $m$ is strictly positive and grows with the parameter $c$.
    \end{itemize}
We refer to \autoref{sec:numerical_results} for a more in depth discussion. 
%
%
\begin{remark}\label{future work}
As a final comment, let us point out that we believe that the convergences result in \autoref{th:main_introduction} can be improved significantly. For reasons of conciseness and since the main goal of this manuscript is to investigate the properties of the limit homogenized equation, we have proved it in this weak form, which is the least we need to draw a quantitative connection between the SPDE and the homogenized limit. Question like almost sure convergence in the space $C(0, T; L^2)$ (in light of the very recent result \cite{agresti2024AnomalousDissipationInduced}), which would allow us to rigorously compare the dissipation of $L^2(\mbT^2)$ norm of the SPDE with that of the homogenized limit allowing to upgrade our theorem to a result of anomalous dissipation, as well as quantitative homogenization estimates, and convergence rates should be added to the picture, and we aim to do so in a future work. 
\end{remark}
\section{The Scaling Limit}\label{sec: scaling}
In this section we prove our first main result \autoref{th:main_introduction}. The strategy is the following: first we prove a martingale estimate ensuring that for $N$ sufficiently large the noise in \eqref{eq:main_ito_N} is small, and the solution is close to a deterministic counterpart with finite $N$. In the second part we prove homogenization of the deterministic part of the equation to an effective PDE, and finally in the third part we combine the two results to get the main theorem. Notice that in the first step, the martingale estimate, we do not assume any scaling relation between $N$ and $r$ except $N\rightarrow + \infty$, $r\rightarrow 0$ and $\theta_k^N=N^{-1}$. 
\subsection{Controlling the Martingale}
In this section we will assume the scaling relation \eqref{ass 0}.
We first compare the It\^o equation \eqref{eq:main_ito_N} with finite $N\in \mbN$ to a deterministic equation with a specified second order operator
\begin{equation}\label{eq:deterministic_discrete}
	\partial_t \tilde{u}^N_t =  \nabla\cdot \Big(\big(\kappa I + A^N(x)\big) \nabla\tilde{u}^N_t\Big) 
\end{equation}
As above, we define the notion of weak solution to \eqref{eq:deterministic_discrete}.
\begin{definition}\label{def:det_discrete_weak}
	   Given $\tilde{u}_0 \in L^2_0(\mbT^2)$ and $T>0$, we say that a path $ u\in C([0, T]; L^2_0(\mbT^2))$ is a weak solution of \eqref{eq:deterministic_discrete} on $[0,T]$ if, for every $\phi \in C^{\infty}(\mbT^2)$, it holds that
	\begin{equation}
		\langle\tilde{u}_t^N, \phi\rangle = \langle\tilde{u}_0, \phi\rangle + \kappa\int_0^t \langle\tilde{u}^N_s, \nabla\cdot (\kappa I + A^N)\nabla\phi\rangle ds .
	\end{equation}
\end{definition}
Just as argued in \autoref{rem:mean_free_test} it is no restriction to additionally require the test function $\phi$ in \autoref{def:det_discrete_weak} to be mean free.
\begin{proposition}\label{prop:A_N_semigroup}
    For $N\in \mbN$, $\kappa>0$ the following all hold
    \begin{enumerate}[label=\roman*)]
        \item \label{it:A_N_semigroup_1} The operator $u\mapsto -\mcL_N^\kappa u \coloneqq \nabla \cdot ((\kappa I + A^N)\nabla u) $, for $u\in \mcD(\mcL_N^\kappa)=\dot \H^2(\T^2)$ is strongly elliptic.
        \item \label{it:A_N_semigroup_2}The operator $u\mapsto \mcL_N^\kappa u $ is the infinitesimal generator of a analytic semigroup of operators on $H$ for which we write $[0,+\infty)\ni t\mapsto e^{t \mcL_N^\kappa}$. In particular, 
        \begin{equation}
            \|e^{t\mcL_N^\kappa} \phi\|_{L^2_x}\lesssim_T \, e^{-\pi^\kappa_N t} \|\phi\|_{L^2_x}\quad \text{for all }\, \phi \in L^2_0(\mbT^2),
        \end{equation}
        where $\pi_N^\kappa$ is defined by \eqref{eq:pi_N_def}.
        \item \label{it:A_N_semigroup_3} Given $T>0$, a weak solution $u^N$ to \eqref{eq:main_ito_N} and a weak solution $\tilde{u}^N$ to \eqref{eq:deterministic_discrete} on $[0,T]$, then the following identities hold in the sense of distributions
        \begin{equation}\label{eq:main_ito_mild}
            u^N_t = e^{t\mcL_N^\kappa} u_0 + \sqrt{2}\sum_{k\in \mbZ^2}\int_0^t e^{(T-s)\mcL_N^\kappa} \nabla u^N_s \cdot \sigma^N_k \dd W^k_s,\quad \text{for all } \, t\in [0,T]
        \end{equation}
        and
         \begin{equation}\label{eq:deterministic_N_mild}
            \tilde{u}^N_t = e^{t\mcL_N^\kappa} u_0,\quad \text{for all } \, t\in [0,T].
        \end{equation}
    \end{enumerate}
\end{proposition}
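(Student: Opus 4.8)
The plan is to establish the three items in turn, each reducing to classical facts about divergence-form elliptic operators with smooth coefficients on the closed manifold $\T^2$ together with the positivity structure of $A^N$. For \ref{it:A_N_semigroup_1}: at every $x\in\T^2$ the matrix $A^N(x)=\sum_k\sigma_k^N(x)\otimes\sigma_k^N(x)$ is symmetric and positive semi-definite, being a (locally finite) sum of rank-one positive semi-definite matrices; since only finitely many patches meet $\T^2$ and each $\sigma_k^N$ is smooth and compactly supported in $\R^2$, the field $x\mapsto\kappa I+A^N(x)$ is smooth and satisfies $\kappa|\xi|^2\le\xi^\top(\kappa I+A^N(x))\xi\le(\kappa+\Lambda_N)|\xi|^2$ for all $\xi\in\R^2$, with $\Lambda_N:=\sup_{x\in\T^2}\|A^N(x)\|<\infty$. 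This is exactly the uniform strong ellipticity of the associated divergence-form operator, the coercivity constant being the molecular diffusivity $\kappa>0$.

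For \ref{it:A_N_semigroup_2}: I would introduce the symmetric bilinear form $a_N^\kappa(u,v):=\int_{\T^2}(\kappa I+A^N(x))\nabla u\cdot\nabla v\,dx$ on $V:=\dot\H^1(\T^2)$. By the bounds above it is continuous on $V$ and, because $\kappa>0$, coercive via the Poincaré inequality, $a_N^\kappa(u,u)\ge\kappa\|\nabla u\|_{L^2_x}^2\gtrsim\|u\|_{L^2_x}^2$. The standard theory of symmetric, bounded, coercive forms on the Gelfand triple $V\hookrightarrow H:=L^2_0(\T^2)\hookrightarrow V^\ast$ identifies the associated operator with $-\mcL_N^\kappa$, shows it is nonnegative, self-adjoint, with compact resolvent, and with domain $\dot\H^2(\T^2)$ (the latter by elliptic regularity for smooth coefficients on a closed manifold); since a self-adjoint operator bounded below is sectorial, $\mcL_N^\kappa$ generates an analytic $C_0$-semigroup of contractions on $H$. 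Finally, by the variational (Courant--Fischer) description of the bottom of the spectrum, $\min\operatorname{spec}(-\mcL_N^\kappa)$ equals the Rayleigh quotient in \eqref{eq:pi_N_def}, namely $\pi_N^\kappa$, and the spectral theorem yields $\|e^{t\mcL_N^\kappa}\phi\|_{L^2_x}\le e^{-\pi_N^\kappa t}\|\phi\|_{L^2_x}$ for $\phi\in H$ (in fact with constant $1$, which is stronger than the asserted $\lesssim_T$).

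For \ref{it:A_N_semigroup_3}: the deterministic identity \eqref{eq:deterministic_N_mild} is the routine equivalence, using uniqueness, between the weak solution of a linear autonomous parabolic equation and its analytic-semigroup solution. For \eqref{eq:main_ito_mild} I would fix $t\in[0,T]$ and a mean-free $\phi\in C^\infty(\T^2)$, and test the weak formulation of \autoref{def:ito_N_weak_sol} against the deterministic, time-dependent test function $\psi_s:=e^{(t-s)\mcL_N^\kappa}\phi$, which for each $s\le t$ lies in $\dot\H^2(\T^2)$ (indeed in $C^\infty$, by the smoothing property of the analytic semigroup), satisfies $\partial_s\psi_s=-\mcL_N^\kappa\psi_s$, and is admissible once the weak formulation is extended from $C^\infty$ to $\dot\H^2$ test functions by density. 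Applying the It\^o formula to $s\mapsto\brak{u_s^N,\psi_s}$, the two drift contributions $\brak{u_s^N,\mcL_N^\kappa\psi_s}\,ds$ and $\brak{u_s^N,\partial_s\psi_s}\,ds$ cancel, leaving only the martingale term; the self-adjointness of $\mcL_N^\kappa$ together with the divergence-free integration by parts $\brak{u_s^N,\sigma_k^N\cdot\nabla\psi_s}=-\brak{\sigma_k^N\cdot\nabla u_s^N,\psi_s}$ (legitimate since $u_s^N\in\H^1_0(\T^2)$ for a.e.\ $s$ by \autoref{prop:ito_N_energy}) then move every operator off $\phi$, recovering the pairing of the claimed mild formula against $\phi$; the stochastic convolution there is well defined in $L^2(\Omega;H)$ because $\|e^{(t-s)\mcL_N^\kappa}\|_{\B(L^2_0)}\le1$ and $\sigma_k^N\cdot\nabla u^N\in L^2_\mbF(0,T;L^2_x)$. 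As mean-free smooth functions are dense in $H$, \eqref{eq:main_ito_mild} follows.

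I expect the last point to be the main obstacle: the rigorous justification of the time-dependent-test-function computation, i.e. the ``weak solution equals mild solution'' equivalence for \eqref{eq:main_ito_N}. The subtlety is that $u^N$ carries only $\H^1_0$ spatial regularity while the equation lives naturally in $\H^{-1}$, so the It\^o formula for $\brak{u_s^N,\psi_s}$ must be set up with some care --- either by discretising $s\mapsto\psi_s$ in time and passing to the limit with the aid of the stochastic Fubini theorem, or by constructing the stochastic convolution directly, verifying it is a weak solution of \eqref{eq:main_ito_N}, and invoking the uniqueness from \autoref{prop:ito_N_energy} (see also \cite[Thm.~5.24]{flandoli_spdes}). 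The remaining steps are bookkeeping with standard form and semigroup theory.
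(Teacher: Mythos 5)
Your argument is correct and is precisely the standard route that the paper delegates to its references: the paper's own ``proof'' simply cites Pazy~\cite[Sec.~7.2]{pazy_83_semigroups} for items~\ref{it:A_N_semigroup_1}--\ref{it:A_N_semigroup_2} (strong ellipticity, form-domain/Lax--Milgram coercivity, analyticity, spectral-gap decay) and Flandoli--Luongo~\cite[Sec.~3.2]{Flandoli_Luongo_Stochastic} for item~\ref{it:A_N_semigroup_3} (weak-equals-mild via the stochastic convolution and uniqueness), which is exactly the content you fill in. You also correctly flag the only genuine technicality, namely the rigorous justification of the time-dependent-test-function/It\^o step with $u^N$ having only $\H^1$ regularity, and your suggested fallback (construct the stochastic convolution directly and invoke the uniqueness from \autoref{prop:ito_N_energy}) is the argument the cited reference actually uses.
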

\begin{proof}
   See collected results of \cite[Sec.~7.2]{pazy_83_semigroups}. The equality between the weak and the mild form, that is, identity \eqref{eq:main_ito_mild} and the uniqueness of the mild formulation are standard results, see for instance  \cite[Section 3.2]{Flandoli_Luongo_Stochastic}
\end{proof}
%
%
Before continuing, for any $N\in \mbN$, let us introduce the following bi-linear form 
\begin{equation}
   L^2(\mbT^2;\mbR^2)^{\otimes 2} \ni (v,w) \mapsto \mcA^N(v,w) \coloneqq \iint_{\mbT^4} v(x)^\top A^N(x,y) w(y)\, \dd x \dd y \in \mbR.
\end{equation}
so that recalling \eqref{eq:eps_N_def} we have
$$\eps_N = \sup_{v\in \H^1, \|\nabla v\|=1} \mathcal{A}^N(v,v)$$
Given the definition of $A^N$ in \eqref{eq:A_N_x_y} and assumption \eqref{ass 0} we have
\begin{align}\label{eq:cA_N_v_v}
    \mcA^N(v,v) =  \, \iint_{\mbT^4} v(x)^\top A^N(x,y)v(y)\dd x \dd y  = \frac{1}{N^2r^2} \sum_{k\in\,\Box^2_1}\left[\int_{\T^2} v(x)\cdot \nabla^\perp \psi\left(\frac{x-k/N}{r}\right)\dd x\right]^2 
\end{align}
Note that since $A^N$ is a positive semi-definite matrix one automatically has $\mcA^N(v,v) \geq 0$.

The following lemma, relying fundamentally on the assumption of compact support of $\psi$, shows that the operator norm of $v\mapsto \mcA^N(v,v)$ is proportional to the scaling constant $r$ and the $L^2(\mbT^2)$ norm of the basic vortex patch $\nabla^\perp \psi$.
\begin{lemma}\label{lem:A_N_op_est}
     Assume \eqref{ass 0}. For all $v\in L^2(\mbT^2;\mbR^2)$, it holds that
     \begin{equation}\label{eq:A_N_op_est}
      \eps_N=\sup_{N\in \mbN}\mcA^N(v,v) \lesssim_c r^{2} \|v\|^2_{L^2_x} \|\nabla^\perp\psi\|^2_{L^2_x}.
 \end{equation}

\end{lemma}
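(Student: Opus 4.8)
The plan is to estimate $\mcA^N(v,v)$ directly from the explicit formula \eqref{eq:cA_N_v_v}, exploiting the compact support of $\psi$ to control how many patches can overlap at any point. First I would rewrite the inner integral using the substitution $y = (x - k/N)/r$, which gives
\begin{equation*}
\int_{\T^2} v(x)\cdot \nabla^\perp\psi\!\left(\frac{x-k/N}{r}\right)\dd x = r^2 \int_{B(0,c)} v(k/N + r y)\cdot \nabla^\perp\psi(y)\dd y,
\end{equation*}
where we used $\supp(\psi)\subseteq B(0,c)$, so $\nabla^\perp\psi$ is supported in $B(0,c)$ as well. Applying Cauchy--Schwarz in the $y$-integral (over the fixed ball $B(0,c)$) yields
\begin{equation*}
\left[\int_{\T^2} v(x)\cdot \nabla^\perp\psi\!\left(\frac{x-k/N}{r}\right)\dd x\right]^2 \le r^4\, \|\nabla^\perp\psi\|_{L^2_x}^2 \int_{B(0,c)} |v(k/N + ry)|^2 \dd y = r^2 \|\nabla^\perp\psi\|_{L^2_x}^2 \int_{B(x^N_k, rc)} |v(x)|^2 \dd x,
\end{equation*}
undoing the substitution in the last step. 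Summing over $k\in\Box^2_1$ and inserting the prefactor $\tfrac{1}{N^2r^2}$ from \eqref{eq:cA_N_v_v}, the $r^2$ cancels and we obtain
\begin{equation*}
\mcA^N(v,v) \le \frac{1}{N^2}\,\|\nabla^\perp\psi\|_{L^2_x}^2 \sum_{k\in\Box^2_1} \int_{B(x^N_k, rc)} |v(x)|^2 \dd x.
\end{equation*}

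The remaining point is the overlap count: the balls $B(x^N_k, rc)$ have radius $rc$ and centres on the lattice $\Lambda^N$ of spacing $1/N$, and since we assume $r < (2c)^{-1}$ these balls are (at worst) boundedly overlapping — in fact any point $x$ lies in at most a constant number of them, the constant depending only on $c$ (under $r<(2c)^{-1}$ one can even check they are disjoint, but a crude bound independent of $N$ and $r$ suffices). Hence $\sum_{k\in\Box^2_1}\mathbbm{1}_{B(x^N_k,rc)}(x) \lesssim_c 1$ pointwise, and moreover each ball, rescaled by $N$, occupies area $\sim (rc)^2$ so that the total count of relevant $k$ is $O(N^2)$; combining these gives
\begin{equation*}
\sum_{k\in\Box^2_1}\int_{B(x^N_k,rc)}|v(x)|^2\dd x \lesssim_c N^2 r^2 \int_{\T^2}|v(x)|^2\dd x = N^2 r^2 \|v\|_{L^2_x}^2,
\end{equation*}
where the factor $N^2 r^2$ comes from: each unit cell of the rescaled lattice contributes a ball of measure $\sim r^2$, there are $\sim N^2$ cells, and the bounded-overlap property prevents multiple counting. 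Substituting back cancels the $N^2$ and produces exactly $\mcA^N(v,v)\lesssim_c r^2\|v\|_{L^2_x}^2\|\nabla^\perp\psi\|_{L^2_x}^2$, as claimed; taking the supremum over $N$ (the bound being uniform in $N$) and over $v$ with $\|\nabla v\|=1$ then also gives the bound on $\eps_N$, using $\|v\|_{L^2}\lesssim\|\nabla v\|_{L^2}$ by Poincaré on mean-free functions.

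\textbf{Main obstacle.} The only genuinely delicate step is the bookkeeping in the overlap/counting argument: one must argue carefully that, with $r<(2c)^{-1}$ fixed, the rescaled balls $B(x^N_k, rc)$ tile $\T^2$ with bounded multiplicity \emph{and} that the number of $k\in\Box^2_1$ with $B(x^N_k,rc)\cap\T^2\neq\emptyset$ is $\Theta(N^2)$, so that the two powers of $N$ cancel cleanly and the final bound is genuinely independent of both $N$ and $r$. Everything else (the change of variables, Cauchy--Schwarz, reassembling the sum into an integral over $\T^2$) is routine. A clean way to package the counting is to note that $\sum_{k\in\mbZ^2}\mathbbm{1}_{B(k/N,\,rc)}$, as a periodic function of $x$, has $L^\infty$ norm bounded by a constant depending only on $c$ (since $rc < 1/2$), while its integral over $\T^2$ equals $N^2\cdot\pi(rc)^2$; Fubini then converts the sum of ball-integrals of $|v|^2$ directly into $\int_{\T^2}|v|^2$ times this $L^\infty$ bound, absorbing the $N$-dependence.
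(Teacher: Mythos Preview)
Your reduction via the change of variables and Cauchy--Schwarz is exactly the paper's first step (they phrase it as H\"older, but it is the same computation), and you correctly arrive at
\[
\mcA^N(v,v) \le \frac{1}{N^2}\,\|\nabla^\perp\psi\|_{L^2_x}^2 \sum_{k} \int_{B(x^N_k, rc)} |v(x)|^2 \dd x.
\]
The gap is in the overlap count. You claim that $\sum_{k}\mathbbm{1}_{B(k/N,rc)}(x)$ is bounded by a constant depending only on $c$, citing $rc<1/2$. This is false: the balls have radius $rc$ and centres on a lattice of spacing $1/N$, so the number of centres within distance $rc$ of a given point $x$ is the number of integer lattice points in a ball of radius $Ncr$, which is $\sim (Ncr)^2$, not $O_c(1)$. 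The condition $rc<1/2$ only says each ball fits in the torus; it says nothing about how many of them overlap.

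The correct bound on the multiplicity is therefore $\lesssim_c N^2 r^2$ (this is exactly what the paper uses), and Fubini then gives
\[
\sum_k \int_{B(x^N_k,rc)}|v|^2 \le \Big\|\sum_k \mathbbm{1}_{B(x^N_k,rc)}\Big\|_{L^\infty_x}\|v\|_{L^2_x}^2 \lesssim_c N^2 r^2 \|v\|_{L^2_x}^2,
\]
which cancels the $N^{-2}$ and yields the stated $r^2$ bound. Your heuristic ``$N^2$ cells, each contributing a ball of area $\sim r^2$, bounded overlap prevents multiple counting'' does not produce this inequality: if the overlap were really $O_c(1)$, Fubini would give $\sum_k\int_{B_k}|v|^2 \lesssim_c \|v\|_{L^2}^2$, and the final bound would be $N^{-2}$ rather than $r^2$. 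These coincide only under (A1), not under (A0) alone. So the approach is right, but the bookkeeping in the one step you flagged as delicate is actually wrong; replace the $O_c(1)$ multiplicity by the correct $O_c(N^2r^2)$ and the proof goes through.
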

\begin{proof}
Recalling that $supp(\psi)\subset B(0,c)$, we begin by applying H\"older's inequality to obtain
\begin{equation*}
    \mathcal{A}^N(v,v)\le N^{-2}\sum_{k\in \Z^2}\|v\|^2_{L^2(B(k/N, cr)\cap \T^2)}\|r^{-1}\nabla^\perp \psi(\cdot/r)\|_{L^2_x}^2
\end{equation*}
Recalling the notation \eqref{eq:r_scaled_definition}, it is easy to see that $\|r^{-1}\nabla^\perp \psi(\cdot/r)\|_{L^2}^2 = \|\nabla^\perp\psi\|_{L^2(B(0,c))}^2 $.
Using again the assumption on the supports of $\psi$ we see that for $k\neq k' \in \Z^2$, the supports of $\psi(\frac{x-k/N}{r})$ and $\psi(\frac{x-k'/N}{r})$ have non empty intersection if and only if 
    $$|k-k'|\le 2cNr.$$
Thus, as $N \to +\infty$ and $r \to 0$, for any $x\in \T^2$, $x\neq k/N, \ k \in \Z^2$ the number of overlaps of the supports of the $\sigma_k$ at $x$ 
$$\#\{k \in \Z^2 : |x-k/N|\le cr\}$$
is bounded, up to a constant, by $N^2r^2$. 
This means that there exists a constant $C(c)>0$ for which, using the additivity of the $L^2_x$ norm with respect to union of sets, we can estimate the sum 
\begin{equation*}
    \sum_{k\in \Z^2}\|v(x)\|^2_{L^2(B(k/N, cr)\cap \T^2)} \le CN^{2}r^2 \|v(x)\|_{L^2}^2.
\end{equation*}
Putting all together we arrive at the claimed bound.
%
\end{proof}

The uniform bound of \autoref{lem:A_N_op_est} allows us to obtain the following result. Recall that due to \autoref{rem:mean_free_test} with no loss of generality we can take all test functions to be mean free.
\begin{proposition}\label{prop:weak_conv}
	 Assume \eqref{ass 0}. Let $u_0^N \in L^2\left(\Omega, \F_0, H\right)$, $T>0$ and $u^N,\, \tilde{u}^N$ be weak solutions to \eqref{eq:main_ito_N} and \eqref{eq:deterministic_discrete} with $u_0$ as initial data. Then, there exists a constant $C\coloneqq C(c)>0$ such that for every $\phi \in C^\infty_0(\T^2)$ 
	\begin{equation*}
		\sup_{t\in [0, T]}\mbE\left[ \big\langle u^N_t -\tilde{u}^N_t, \phi\rangle ^2\right] \leq  \frac{C r^{2}}{\kappa} \|\phi\|_{L^\infty}^2\expt[]{\|u_0^N\|_{L^2}^2}
	\end{equation*}
\end{proposition}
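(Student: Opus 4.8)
The plan is to run a Duhamel/mild‑formulation argument, controlling the resulting stochastic integral by the bilinear form $\mcA^N$ via \autoref{lem:A_N_op_est} and closing the estimate with the energy identity \eqref{eq:ito_N_energy}. Throughout we assume \eqref{ass 0}, fix $t\in[0,T]$, and write $z^N_t\coloneqq u^N_t-\tilde u^N_t$.

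First I would use \autoref{prop:A_N_semigroup}. Since $u^N$ and $\tilde u^N$ start from the same datum $u_0^N$, subtracting \eqref{eq:deterministic_N_mild} from \eqref{eq:main_ito_mild} (reading the exponent in \eqref{eq:main_ito_mild} as $e^{(t-s)\mcL_N^\kappa}$) gives
\[
z^N_t=\sqrt2\sum_{k\in\Z^2}\int_0^t e^{(t-s)\mcL_N^\kappa}\big(\sigma^N_k\cdot\grad u^N_s\big)\,\dd W^k_s .
\]
Testing against a mean‑free $\phi\in C^\infty_0(\T^2)$ and using that $\mcL_N^\kappa=\grad\cdot((\kappa I+A^N)\grad\,\cdot\,)$ has symmetric coefficients — so $e^{(t-s)\mcL_N^\kappa}$ is self‑adjoint on $L^2_0(\T^2)$ — I move the semigroup onto the test function: setting $\varphi_s\coloneqq e^{(t-s)\mcL_N^\kappa}\phi\in C^\infty_0(\T^2)$ (smooth by analyticity of the semigroup and elliptic regularity), and using $\brak{\sigma^N_k\cdot\grad u^N_s,\varphi_s}=\brak{\varphi_s\grad u^N_s,\sigma^N_k}_{L^2_x}$,
\[
\brak{z^N_t,\phi}=\sqrt2\sum_{k\in\Z^2}\int_0^t\brak{\varphi_s\,\grad u^N_s,\ \sigma^N_k}_{L^2_x}\dd W^k_s .
\]

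Next I would take second moments (plain It\^o isometry suffices, since we only need $\sup_t\EE[\,\cdot\,]$, not $\EE[\sup_t\,\cdot\,]$). Because $[W^k-W^{k'}]\equiv0$ whenever $x_k-x_{k'}\in\Z^2$, these Brownian motions coincide; grouping by classes $[k]\in\Z^2/N\Z^2$ produces $N^2$ genuinely independent Brownian motions $W^{[k]}$ and
\[
\brak{z^N_t,\phi}=\sqrt2\sum_{[k]}\int_0^t\brak{\varphi_s\,\grad u^N_s,\ \bar\sigma^N_{[k]}}_{L^2_x}\dd W^{[k]}_s,\qquad \bar\sigma^N_{[k]}\coloneqq\sum_{k'\in[k]}\sigma^N_{k'},
\]
the sum defining $\bar\sigma^N_{[k]}$ having finitely many nonzero terms on $\T^2$ and being $\Z^2$‑periodic (the computation in \autoref{sec:definition_of_noise}). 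The It\^o isometry gives
\[
\EE\big[\brak{z^N_t,\phi}^2\big]=2\int_0^t\EE\Big[\sum_{[k]}\brak{\varphi_s\,\grad u^N_s,\ \bar\sigma^N_{[k]}}_{L^2_x}^2\Big]\dd s .
\]
On $\T^2$ each $\bar\sigma^N_{[k]}$ splits into at most four translated copies of some $\sigma^N_{k'}$ (distinct $\Z^2$‑translates have disjoint supports, balls of radius $rc<\nicefrac12$ by $r<(2c)^{-1}$, and at most four such balls meet a fixed unit cell), so Cauchy--Schwarz yields $\brak{F,\bar\sigma^N_{[k]}}^2\le4\sum_{k'\in[k]}\brak{F,\sigma^N_{k'}}^2$; recalling $\mcA^N(F,F)=\sum_k\brak{F,\sigma^N_k}_{L^2_x}^2$ this gives, for every $s$,
\[
\sum_{[k]}\brak{\varphi_s\,\grad u^N_s,\ \bar\sigma^N_{[k]}}_{L^2_x}^2\le 4\,\mcA^N\big(\varphi_s\grad u^N_s,\ \varphi_s\grad u^N_s\big).
\]
Then I apply \autoref{lem:A_N_op_est} with $v=\varphi_s\grad u^N_s\in L^2(\T^2;\R^2)$, $\mcA^N(v,v)\lesssim_c r^2\norm{\nabla^\perp\psi}_{L^2_x}^2\norm{v}_{L^2_x}^2$, bound $\norm{\varphi_s\grad u^N_s}_{L^2_x}^2\le\norm{\varphi_s}_{L^\infty}^2\norm{\grad u^N_s}_{L^2_x}^2$, and use that $e^{(t-s)\mcL_N^\kappa}$ is an $L^\infty$‑contraction — a standard consequence of the weak maximum principle for the uniformly elliptic divergence‑form operator $\mcL_N^\kappa$ (smooth coefficients, $\kappa I+A^N\ge\kappa I$), equivalently sub‑Markovianity of the associated symmetric semigroup — so $\norm{\varphi_s}_{L^\infty}\le\norm{\phi}_{L^\infty}$. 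Collecting terms and integrating,
\[
\EE\big[\brak{z^N_t,\phi}^2\big]\lesssim_c r^2\,\norm{\nabla^\perp\psi}_{L^2_x}^2\,\norm{\phi}_{L^\infty}^2\int_0^t\EE\big[\norm{\grad u^N_s}_{L^2_x}^2\big]\dd s ,
\]
and the energy identity \eqref{eq:ito_N_energy} of \autoref{prop:ito_N_energy} (applied pathwise, then in expectation, the datum being $\mcF_0$‑measurable) bounds $\int_0^T\EE[\norm{\grad u^N_s}_{L^2_x}^2]\dd s\le\tfrac1{2\kappa}\EE[\norm{u_0^N}_{L^2_x}^2]$. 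Since the resulting bound is independent of $t\in[0,T]$, taking the supremum over $t$ and absorbing the fixed quantity $\norm{\nabla^\perp\psi}_{L^2_x}^2$ into $C(c)$ yields the claim.

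I expect the only genuinely delicate point to be the It\^o‑isometry step: correctly handling the degenerate covariation of the $\{W^k\}$ (via the passage to finitely many independent classes, equivalently to the periodised patches $\bar\sigma^N_{[k]}$) and reducing the resulting quadratic expression to $\mcA^N$ at the cost of only an absolute constant — this rests on the compact support of $\psi$ and the standing assumption $r<(2c)^{-1}$, exactly as in the proof of \autoref{lem:A_N_op_est}. Everything else — self‑adjointness and $L^\infty$‑contractivity of $e^{t\mcL_N^\kappa}$, \autoref{lem:A_N_op_est}, and the energy estimate — is either recalled in the text or entirely standard.
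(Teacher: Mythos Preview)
Your proof is correct and follows the same route as the paper: write the difference in mild form, apply It\^o's isometry, move the semigroup onto the test function by self-adjointness, recognise the quadratic form $\mcA^N$, invoke \autoref{lem:A_N_op_est}, bound $\|e^{(t-s)\mcL_N^\kappa}\phi\|_{L^\infty}\le\|\phi\|_{L^\infty}$, and close with the energy identity.

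The one place where you are actually \emph{more} careful than the paper is the It\^o isometry step. The paper writes $\EE[\langle D^N_t,\phi\rangle^2]=\EE\big[\sum_{k\in\Z^2}\int_0^t|\langle\sigma_k^N\cdot\nabla u^N_s,\varphi_s\rangle|^2\,ds\big]$ as a pure diagonal sum, tacitly treating the $W^k$ as independent. Your grouping into the $N^2$ genuinely independent classes $[k]\in\Z^2/N\Z^2$, followed by the observation that at most four $\Z^2$-translates of a patch meet the unit cell (using $rc<\nicefrac12$) and Cauchy--Schwarz to pay a factor $4$, is the honest way to reduce to $\mcA^N$. This concern is legitimate --- the cross terms $\langle\sigma_k^N\cdot\nabla u^N_s,\varphi_s\rangle\langle\sigma_{k'}^N\cdot\nabla u^N_s,\varphi_s\rangle$ for $k-k'\in N\Z^2\setminus\{0\}$ are not a priori zero when the two patches touch different corners of the fundamental domain --- and your fix costs only an absolute constant absorbed into $C(c)$. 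Your explicit invocation of the maximum principle for the $L^\infty$-contractivity of $e^{t\mcL_N^\kappa}$ is likewise a point the paper uses without comment.
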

\begin{proof}
Defining $D^N_t \coloneqq u^N_t -\tilde{u}^N_t$ we subtract \eqref{eq:deterministic_N_mild} from \eqref{eq:main_ito_mild} to find that for any $\phi\in C^\infty_0(\mbT^2)$ and $t\in [0,T]$ one has
\begin{equation*}
	\langle D^N_t,\phi\rangle  =  \sum_{k\in \Z^2} \int_0^t \left\langle e^{(t-s)\mcL_N^\kappa}\left(\sigma_{k}^N\cdot \nabla u^N_s\right),\phi\right\rangle  dW_s^{k} 
\end{equation*}
Hence, by It\^o's isometry, we have
\begin{equation}\label{ito martingale}
    \mbE\left[ \left|\left\langle D^N_t,\phi\right\rangle\right|^2\right] = \mbE\left[\, \sum_{k\in \Z^2}\int_0^t \left|\left\langle e^{(t-s)\mcL_N^\kappa}\left(\sigma_{k}^N  \cdot \nabla u^N_s\right),\phi\right\rangle\right|^2 \dd s \, \right].
\end{equation}
Define $\phi_{t,s} = e^{(t-s)\mathcal{L}_N^\kappa}\phi$. Using the self-adjointedness of the semigroup and expanding the square inside the integral, we find 
\begin{align*}
    \sum_{k\in \Z^2} \left|\left\langle  e^{(t-s)\mcL_N^\kappa}\left(\sigma_{k}^N \cdot \nabla  u^N_s\right),\phi\right\rangle\right|^2 =&  \sum_{k\in \Z^2} \left|\left\langle \sigma_{k}^N  , \nabla  u^N_s \phi_{t,s}\right\rangle\right|^2\\
=& \sum_{k\in \Z^2} \left\langle \sigma_{k}^N  ,\nabla u^N_s   e^{(t-s)\mcL_N^\kappa}\phi\right\rangle  \left\langle \sigma_{k}^N  ,\nabla u^N_s  e^{(t-s)\mcL_N^\kappa}\phi\right\rangle\\
    = & \iint_{\mbT^4} \left(e^{(t-s)\mcL_N^\kappa}\phi\right)(x)\left(\nabla u^N_s\right)^\top \hspace{-0.5em} (x)  A^N(x,y) \nabla u^N_s(y)   \left(e^{(t-s)\mcL_N^\kappa}\phi\right)(y) \dd x\dd y .
\end{align*}
Now we apply \autoref{lem:A_N_op_est} in the last line to see that 
\begin{align*}
     \sum_{k\in \Z^2} \left|\left\langle  e^{(t-s)\mcL_N^\kappa}\left(\sigma_{k}^N \cdot \nabla  u^N_s\right),\phi\right\rangle\right|^2  &= \mcA^N\left(\nabla u_s^N e^{(t-s)\mcL_N^\kappa}\phi,\nabla u_s^N e^{(t-s)\mcL_N^\kappa}\phi\right) \\&  \le C(c)r^2 \Big\|\nabla u_s^N e^{(t-s)\mcL_N^\kappa}\phi \Big \|_{L^2_x}^2.
\end{align*}
Thus, appealing to  \autoref{prop:ito_N_energy} and \autoref{prop:A_N_semigroup} we obtain the bound 
\begin{align*}
    \mbE\left[ \left|\left\langle D^N_t,\phi\right\rangle\right|^2\right] &\lesssim_c r^{2}\int_0^t \EE\Big[\Big\|\nabla u_s^N e^{(t-s)\mcL_N^\kappa}\phi \Big \|_{L^2_x}^2\Big]ds \\
     &\lesssim_c r^{2}\sup_{s\in [0,T]}\Big\|e^{(t-s)\mcL_N^\kappa}\phi\Big\|_{L^\infty_x}^2\expt[]{\int_0^t\|\nabla u_s^N\|_{L^2_x}^2ds} \\ 
     &\lesssim_c \frac{r^2}{\kappa} \|\phi\|_{L^\infty_x}^2 \expt{\|u_0\|_{L^2_x}^2}.
\end{align*}
 \end{proof}
Once we have this result, given any $u_0 \ge 0$, we readily obtain a quantitative mixing-type estimate for $u^N$.
Recalling the quantity $\pi_N^\kappa$ defined in \eqref{eq:pi_N_def}, which is the first eigenvalue of the elliptic operator $f\mapsto - \mcL^\kappa_N  f =\operatorname{div}\left( (\kappa I + A^N(x) \nabla) f\right)$, we have the following.
\begin{corollary}\label{dissipation piN}
   In the setting of \autoref{prop:weak_conv}, for any $\phi \in C^\infty(\mbT^2)$ there exists a constant $C\coloneqq C(c,\|\phi\|_{L^\infty_x})>0$ such that
\begin{equation}\label{eq:u_N_dissipation}
       \EE \left[\left(  \int_{\T^2} u^N(t,x)\phi(x)\dd x\right)^2\right] \le 2\left( \frac{C r^2}{\kappa} +\exp (-2\pi_N^\kappa t)\right) \expt{\|u^N_0\|^2_{L^2}}.
   \end{equation}
\end{corollary}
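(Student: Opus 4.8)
The plan is to combine the martingale estimate from \autoref{prop:weak_conv} with the exponential decay of the deterministic semigroup from \autoref{prop:A_N_semigroup}, using the elementary inequality $(a+b)^2 \le 2a^2 + 2b^2$. Writing $D^N_t = u^N_t - \tilde u^N_t$ as in the proof of \autoref{prop:weak_conv}, we have for any mean-free $\phi \in C^\infty_0(\mbT^2)$ the decomposition $\langle u^N_t, \phi\rangle = \langle D^N_t, \phi\rangle + \langle \tilde u^N_t, \phi\rangle$, and hence
\begin{equation*}
    \EE\left[\langle u^N_t,\phi\rangle^2\right] \le 2\,\EE\left[\langle D^N_t, \phi\rangle^2\right] + 2\,\EE\left[\langle \tilde u^N_t, \phi\rangle^2\right].
\end{equation*}
The first term is bounded by $2 C r^2 \kappa^{-1}\|\phi\|_{L^\infty_x}^2 \EE[\|u^N_0\|_{L^2_x}^2]$ directly from \autoref{prop:weak_conv} (absorbing the $2$ into the constant $C(c,\|\phi\|_{L^\infty_x})$).

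For the second term, recall from \eqref{eq:deterministic_N_mild} that $\tilde u^N_t = e^{t\mcL^\kappa_N} u^N_0$. By Cauchy--Schwarz and the semigroup bound in \autoref{prop:A_N_semigroup}\ref{it:A_N_semigroup_2},
\begin{equation*}
    \langle \tilde u^N_t, \phi\rangle^2 \le \|\tilde u^N_t\|_{L^2_x}^2 \|\phi\|_{L^2_x}^2 \le \|\phi\|_{L^2_x}^2\, e^{-2\pi^\kappa_N t} \|u^N_0\|_{L^2_x}^2,
\end{equation*}
so that $\EE[\langle \tilde u^N_t,\phi\rangle^2] \le \|\phi\|_{L^2_x}^2 e^{-2\pi^\kappa_N t}\EE[\|u^N_0\|_{L^2_x}^2]$, and since $\|\phi\|_{L^2_x} \le \|\phi\|_{L^\infty_x}$ on $\T^2$ this folds into the same constant. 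Combining the two contributions gives precisely the claimed bound \eqref{eq:u_N_dissipation}, with $C \coloneqq C(c,\|\phi\|_{L^\infty_x})$ taken as the maximum of the constants appearing in the two estimates. One should note the constant in \autoref{prop:A_N_semigroup}\ref{it:A_N_semigroup_2} carries a $\lesssim_T$; since we only need a bound on a fixed interval $[0,T]$ this is harmless, and in fact on the mean-free subspace $L^2_0(\mbT^2)$ the semigroup estimate holds with implicit constant $1$ because $-\mcL^\kappa_N$ is self-adjoint with spectral gap $\pi^\kappa_N$, so no $T$-dependence is actually incurred.

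There is essentially no obstacle here: the corollary is a routine repackaging of the preceding proposition and the spectral decay of the deterministic flow. The only minor point requiring a word of care is the reduction to mean-free test functions — but this is already justified by \autoref{rem:mean_free_test}, since both $u^N_t$ and $\tilde u^N_t$ conserve their mean, which equals that of $u^N_0$; if one wants the statement for general $\phi \in C^\infty(\mbT^2)$ (not necessarily mean-free) one simply splits $\phi = \bar\phi + (\phi - \bar\phi)$ into its average $\bar\phi$ and mean-free part, noting $\langle u^N_t, \bar\phi\rangle = \bar\phi \langle u^N_0, 1\rangle = 0$ when $u^N_0 \in L^2_0$, so the general case reduces immediately to the mean-free one. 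The hypothesis $u_0 \ge 0$ mentioned in the surrounding text plays no role in the inequality itself and can be dropped.
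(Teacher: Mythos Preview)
Your proof is correct and follows essentially the same approach as the paper: split $\langle u^N_t,\phi\rangle$ into the stochastic remainder $\langle D^N_t,\phi\rangle$ controlled by \autoref{prop:weak_conv} and the deterministic part $\langle \tilde u^N_t,\phi\rangle$ controlled by the semigroup decay of \autoref{prop:A_N_semigroup}, then combine via $(a+b)^2\le 2a^2+2b^2$. Your treatment of the mean-free reduction and the remark on the implicit constant in the semigroup bound are more careful than the paper's, but the argument is the same.
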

\begin{proof}
     First notice that since $u^N$ is mean free, the LHS of \eqref{eq:u_N_dissipation} does not change if we replace $\phi$ by $\phi - m$ for any constant $m\in \R$, so we may assume that $\phi \in C^\infty_0(\mbT^2)$. By \autoref{prop:weak_conv}, we have for every mean free $\phi\in C^\infty_0(\T^2)$, $t\in [0,T]$ and some $C\coloneqq C(c)>0$,
    \begin{equation*}
       \expt[]{\left(\int_{\T^2} \left (u^N(t,x) - \tilde u^N(t,x) \right)\phi(x)\dd x \right)^2} \lesssim  \frac{ C r^2}{\kappa} \|\phi\|^2_{L^\infty_x}\expt[]{\|u_0^N\|_{L^2}^2}. 
    \end{equation*}
    Then, using the inequality $A^2 \le 2(A-B)^2 + 2B^2$, H\"older inequality, along with \autoref{prop:A_N_semigroup} \autoref{it:A_N_semigroup_2} and \autoref{it:A_N_semigroup_3}  we get
     \begin{align*}
        \EE \left[ \left(\int_{\T^2} u^N(t,x) \phi(x)\dd x\right )^2 \right] &\le \|\phi\|_{L^\infty}^2 \left( 2C(c)\frac{r^2}{\kappa} \expt[]{\|u_0^N\|_{L^2}^2} + 2\expt[]{\|\tilde u^N(t,\cdot )\|_{L^2}^2 }\right)\\
        &\lesssim_\phi  2C(c)\frac{r^2}{\kappa} \expt[]{\|u_0^N\|_{L^2}^2} + 2\expt{\|e^{t \mcL ^N} u_0^N\|_{L^2}^2} \\
        & \lesssim_\phi  2C(c)\frac{r^2}{\kappa} \expt[]{\|u_0^N\|_{L^2}^2} + 2 \exp(- 2\pi_N^\kappa t)\expt{\| u_0^N\|_{L^2}^2}
        \end{align*}
\end{proof}
  The quantity $\pi_N^\kappa$, appearing in the estimate is not easy to quantify precisely, especially taking into account its dependence on the molecular diffusivity $\kappa$ and on the radius of support of the vortex patches $c$. As we will see in \autoref{lem:A_cases} and \autoref{prop:pointwise_ellipticity}, if $c$ is chosen sufficiently large, then $A^N(x)$ becomes uniformly elliptic and $\pi_N^\kappa$ can be estimated as in \autoref{rem:unif_ell}, and can be made arbitrarily large by an appropriate choice of the stream-function $\psi$. One could easily prove a weak statement, i.e. that for fixed $\kappa >0$, and every $N>0$, $c>0$, it holds $\pi_N^\kappa > \kappa$. However, in order to obtain sharper bounds or study the limiting behaviours as $N\rightarrow \infty$ and/or $\kappa \to 0$, as is our aim, we turn to homogenization theory and aim to quantify behaviour of the limiting equation satisfied by $\lim_{N\to \infty} u^N$.
\subsection{Homogenization of the Stratonovich Corrector}\label{sec:pde_homogenisation}

In this section, under the scaling regime \eqref{ass 1}, we study the homogenization of solutions $\tilde u^N_t$ to \eqref{eq:deterministic_discrete}, that is we deduce the existence of a constant $C(c,\kappa)\ge \kappa$ such that for $N \rightarrow \infty $ any solution $\tilde u^N_t$ of \eqref{eq:deterministic_discrete} converges in an appropriate sense (\textit{c.f.} \autoref{prop:pde_homogenisation}) to the solution of the homogenized PDE
$$\partial_t \tilde u = C(c,\kappa)\Delta \tilde u.$$
The fact that the homogenized diffusivity matrix is a multiple of the identity is a crucial result of this section (\emph{c.f.} \autoref{prop_diag})
Let us introduce the notation 
\begin{equation*}
    H_\kappa(x):= \kappa I + A(x).
\end{equation*}
So that we rewrite the equation \eqref{eq:deterministic_discrete} for $\tilde u_t^N$ as
\begin{equation*}
    \partial_t \tilde u^N (t,x)= \nabla \cdot \Big(H_\kappa(Nx) \nabla \tilde u^N(t,x)\Big).
\end{equation*}
We work under the assumptions of \autoref{lem:A_N_rescaling}, that is we set $\theta^N_{\,\cdot\,}\equiv r=\nicefrac{1}{N}$. As a result we always have $A^N(x)= A^1(Nx)=A(Nx)$ by \autoref{lem:A_N_rescaling},  which allows us to use standard homogenization results. Recall also that, if $\kappa>0$, we have the uniform ellipticity property
\begin{equation}\label{eq:H_uniform_elliptic}
    H_\kappa(x) \xi \cdot \xi = \left(\kappa I + A(x)\right) \xi \cdot \xi \ge \kappa|\xi|^2 \quad\text{for \,  a.e. }\,  x \in \T^2
\end{equation}

 Moreover, since $H_\kappa$ is time independent, the homogenization of the parabolic problem reduces to homogenization of the elliptic operator $H_\kappa$ (see \cite[Ch.~2, Rem.~1.6]{bensoussan1978AsymptoticAnalysisPeriodic}). \\
 
The following theorem is the main result of this section, which is an application of  \cite[Chapter 2]{bensoussan1978AsymptoticAnalysisPeriodic} to our case.
\begin{proposition}\label{prop:pde_homogenisation}
    Assume \eqref{ass 0}. Let $u_0\in \dot L^2(\mbT^2)$, $\kappa >0$, $T>0$ and $\tilde u^N$ be the associated unique weak solution of \eqref{eq:deterministic_discrete} in the sense of \autoref{def:det_discrete_weak}.
    For $e_1,\,e_2$ as above and $\phi_i$ for $i=1,\,2$ the unique solutions of
    \begin{align}\label{eq:corrector}
    \begin{cases}
        \nabla\cdot \Big(H_\kappa e_i+ H_\kappa \nabla \phi_i\Big) =0 \\
        \phi_i \in \dot \mcH^1(\mbT^2).
    \end{cases}
\end{align}
we let $\bar H_\kappa$ be the matrix defined by
\begin{equation} \label{eq:hom_H}
    (\bar H_{\kappa})_{ij} := \int_{\mbT^2} (H_\kappa(e_j + \nabla \phi_j))\cdot e_i. 
\end{equation}
    Then, finally, letting $\bar u \in C(0, T; \dot \H^1(\T^2))$ be the unique weak solution to the initial value problem 
    \begin{equation}\label{eq:homogenised_pde}
\partial_t \bar u = \nabla \cdot (\bar H \nabla \bar u), \qquad \bar u(0,x)= u_0        
    \end{equation}
one has the limits
\begin{equation*}
    \tilde u^N \to \bar u  \,\, \text{ in } L^2([0, T]; L^2(\T^2)) \quad  \text{and }\quad  \tilde u^N \rightharpoonup \bar u\,\, \text{in } L^2([0, T]; \mcH^1(\mbT^2)).
\end{equation*}
\end{proposition}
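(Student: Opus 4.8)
The statement is essentially a direct application of the classical periodic homogenisation theorem for parabolic equations (as in \cite[Chapter 2]{bensoussan1978AsymptoticAnalysisPeriodic}), once the abstract hypotheses are checked in our concrete setting. The plan is to verify: (i) the coefficient field $H_\kappa(x) = \kappa I + A(x)$ is smooth, $\mbZ^2$-periodic, symmetric and uniformly elliptic; (ii) the cell problems \eqref{eq:corrector} are well-posed so that $\bar H_\kappa$ is well-defined; (iii) the reduction of parabolic homogenisation to elliptic homogenisation applies because $H_\kappa$ is time independent; and (iv) translate the abstract convergence into the two stated limits. The key inputs have all been prepared earlier in the excerpt.

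For step (i), periodicity of $A$ follows from the periodicity argument in \autoref{sec:definition_of_noise} (the computation showing $\sum_k \sigma_k^N(x+L)W_t^k = \sum_k \sigma_k^N(x)W_t^k$, specialised to $N=1$), while smoothness follows since $\psi \in C^\infty$ with compact support and only finitely many patches overlap any given point. Symmetry and positive semi-definiteness of $A(x)$ are immediate from the definition $A(x) = \sum_k \sigma_k(x)\otimes \sigma_k(x)$ in \eqref{eq:A_N_x}, and then uniform ellipticity of $H_\kappa$ is exactly \eqref{eq:H_uniform_elliptic}, with ellipticity constant $\kappa > 0$; an upper bound $|H_\kappa(x)\xi| \le \Lambda|\xi|$ follows from $\|A\|_\infty < \infty$ by compact support. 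For step (ii), well-posedness of \eqref{eq:corrector} in $\dot{\mcH}^1(\mbT^2)$ is the standard Lax--Milgram argument on the space of mean-free periodic $\mcH^1$ functions, using \eqref{eq:H_uniform_elliptic} and the Poincar\'e inequality, exactly as cited from \cite[Sec.~6.2.1]{evans_10}; this makes $\bar H_\kappa$ in \eqref{eq:hom_H} well-defined, and \autoref{lem:homogenised_upper_lower_symmetric} gives that $\bar H_\kappa$ is symmetric with $\bar H_\kappa \xi \cdot \xi \ge \kappa |\xi|^2$, so in particular $\bar H_\kappa$ is itself uniformly elliptic and the limit PDE \eqref{eq:homogenised_pde} has a unique weak solution $\bar u \in C(0,T;\dot{\mcH}^1(\mbT^2))$ by the usual Galerkin/energy method.

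For steps (iii)--(iv): under assumption \eqref{ass 0} combined with $\theta^N_{\cdot}\equiv r = 1/N$ (i.e.\ \eqref{ass 1}), \autoref{lem:A_N_rescaling} gives the exact rescaling $A^N(x) = A(Nx)$, so \eqref{eq:deterministic_discrete} for $\tilde u^N$ is literally of the form $\partial_t \tilde u^N = \nabla\cdot(H_\kappa(Nx)\nabla \tilde u^N)$ treated in \eqref{abs_ell}. Since $H_\kappa$ has no time dependence, \cite[Ch.~2, Rem.~1.6]{bensoussan1978AsymptoticAnalysisPeriodic} reduces the parabolic homogenisation to the elliptic one, and the abstract theorem \cite[Chapter 2]{bensoussan1978AsymptoticAnalysisPeriodic} yields $\tilde u^N \to \bar u$ strongly in $L^2(0,T;L^2(\mbT^2))$ and $\tilde u^N \rightharpoonup \bar u$ weakly in $L^2(0,T;\mcH^1(\mbT^2))$, provided $u_0^N \to u_0$ in $L^2(\mbT^2)$ (here the data are fixed, $u_0^N = u_0$, so this is automatic). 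The weak $\mcH^1$ convergence in particular comes for free from the uniform energy bound \eqref{eq:ito_N_energy} (applied with $\kappa$ replaced by the effective lower ellipticity, or directly from the deterministic energy identity for \eqref{eq:deterministic_discrete}), which gives $\{\tilde u^N\}$ bounded in $L^2(0,T;\mcH^1)$, hence weakly precompact, and the strong $L^2$ limit identifies the weak limit as $\bar u$.

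\textbf{Main obstacle.} There is no serious analytical obstacle: the entire argument is an instantiation of a textbook theorem. The only points requiring a little care are bookkeeping ones: confirming that the compact-support/finite-overlap structure of the $\sigma_k$ indeed delivers a \emph{globally smooth periodic} coefficient field $A$ on $\mbT^2$ (so that the smoothness hypothesis of \cite{bensoussan1978AsymptoticAnalysisPeriodic} is met), and being careful that the homogenisation statement is applied on the torus with mean-free data in the homogeneous space $\dot{\mcH}^1$ rather than on a bounded domain with Dirichlet data — the reduction is standard (expand in Fourier modes / use that constants are in the kernel), but it is the one place where our setting differs cosmetically from the reference. Everything else — uniform ellipticity, well-posedness of correctors, symmetry of $\bar H_\kappa$ — has already been packaged in \eqref{eq:H_uniform_elliptic} and \autoref{lem:homogenised_upper_lower_symmetric}.
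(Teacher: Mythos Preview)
Your proposal is correct and takes essentially the same approach as the paper: the paper's own proof is a one-line reference to \cite[Chapter~2, Thm.~2.1 \& Rem.~1.6]{bensoussan1978AsymptoticAnalysisPeriodic} with the remark that the adaptation to the periodic setting is straightforward. Your write-up simply spells out the hypothesis-checking (periodicity and smoothness of $A$, uniform ellipticity \eqref{eq:H_uniform_elliptic}, well-posedness of \eqref{eq:corrector}, and the rescaling $A^N(x)=A(Nx)$ from \autoref{lem:A_N_rescaling}) that the paper leaves implicit, so there is nothing to correct or contrast.
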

\begin{proof}
We refer the reader to \cite[Chapter 2, Thm.~2.1 \& Rem.~1.6]{bensoussan1978AsymptoticAnalysisPeriodic}, the adaptation to the periodic setting is straightforward.
\end{proof}
Let us introduce the Frechet space $\dot \mcH^{-}(\mbT^2) \coloneqq \bigcap_{\alpha>0} \dot\mcH^{-\alpha}(\mbT^2)$, endowed with its natural topology induced by the sequence of seminorms $(\|\|_{\dot \H^\alpha})_{\alpha >0}$
\begin{corollary}\label{H-}
    In the above setting it also holds that $\tilde u^N \rightarrow \bar u$ in $C([0, T]; \mcH^-(\mbT^2))$. As a result, for any $\phi \in C^\infty(\mbT^2;\mbR)$
    \begin{equation}
       \lim_{N\to\infty} \sup_{t\in [0,T]}\left|\langle \tilde{u}^N -\bar{u},\phi\rangle\right| = 0.
    \end{equation}
\end{corollary}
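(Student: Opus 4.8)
The plan is to upgrade the $L^2([0,T];L^2(\T^2))$ convergence of \autoref{prop:pde_homogenisation} to uniform‑in‑time convergence by an Arzelà--Ascoli argument carried out in each negative Sobolev space $\dot\H^{-\alpha}(\T^2)$, $\alpha>0$, separately, and then to intersect over $\alpha$ to land in the Fréchet space $\dot\H^-(\T^2)$. The final pairing statement will then follow for free: given $\phi\in C^\infty(\T^2;\R)$, the function $\phi-\int_{\T^2}\phi$ lies in $\dot\H^\alpha(\T^2)$ for every $\alpha>0$ and pairs with the mean‑free $\tilde u^N,\bar u$ exactly as $\phi$ does, so $\sup_{t}|\brak{\tilde u^N_t-\bar u_t,\phi}|\le\norm{\phi-\int\phi}_{\dot\H^\alpha}\sup_t\norm{\tilde u^N_t-\bar u_t}_{\dot\H^{-\alpha}}\to0$.

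I need two $N$‑uniform ingredients. The first is an a priori bound: testing \eqref{eq:deterministic_discrete} with $\tilde u^N$ and using $A^N\ge0$ yields the energy balance $\tfrac12\norm{\tilde u^N_t}_{L^2_x}^2+\kappa\int_0^t\norm{\nabla\tilde u^N_s}_{L^2_x}^2\dd s\le\tfrac12\norm{u_0}_{L^2_x}^2$, hence $\sup_{t\le T}\norm{\tilde u^N_t}_{L^2_x}\le\norm{u_0}_{L^2_x}$ and $\int_0^T\norm{\nabla\tilde u^N_s}_{L^2_x}^2\dd s\le(2\kappa)^{-1}\norm{u_0}_{L^2_x}^2$; the same bounds hold for $\bar u$ since $\bar H_\kappa\ge\kappa I$ by \autoref{lem:homogenised_upper_lower_symmetric}. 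The second is an equicontinuity estimate: from the weak formulation of \eqref{eq:deterministic_discrete}, extended by density to test functions $\psi\in\dot\H^1(\T^2)$ and with the spatial derivatives moved onto $\psi$, one has for $s\le t$
\begin{equation*}
    \brak{\tilde u^N_t-\tilde u^N_s,\psi}=-\int_s^t\brak{(\kappa I+A^N)\nabla\tilde u^N_\tau,\nabla\psi}\dd\tau .
\end{equation*}
Since $A^N(\cdot)=A(N\,\cdot\,)$ gives $\norm{A^N}_{L^\infty_x}=\norm{A}_{L^\infty_x}<\infty$, Cauchy--Schwarz in space and then in time yield $\norm{\tilde u^N_t-\tilde u^N_s}_{\dot\H^{-1}(\T^2)}\lesssim_\kappa\norm{u_0}_{L^2_x}\,|t-s|^{1/2}$, uniformly in $N$; interpolating with the uniform $L^2$ bound propagates this to $\norm{\tilde u^N_t-\tilde u^N_s}_{\dot\H^{-\alpha}(\T^2)}\lesssim_{\kappa,\alpha}\norm{u_0}_{L^2_x}\,|t-s|^{\alpha/2}$ for $\alpha\in(0,1]$ (the case $\alpha\ge1$ being already covered), so $\{\tilde u^N\}_N$ is uniformly equicontinuous as a family of maps $[0,T]\to\dot\H^{-\alpha}(\T^2)$.

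With these in place the Arzelà--Ascoli step is routine. Fix $\alpha>0$; for each $t$ the set $\{\tilde u^N_t\}_N$ is bounded in $L^2(\T^2)$, hence relatively compact in $\dot\H^{-\alpha}(\T^2)$ by the compact embedding $L^2(\T^2)\hookrightarrow\hookrightarrow\dot\H^{-\alpha}(\T^2)$, so $\{\tilde u^N\}_N$ is relatively compact in $C([0,T];\dot\H^{-\alpha}(\T^2))$. Any subsequential limit $w$ coincides with $\bar u$: indeed $\tilde u^N\to\bar u$ in $L^2([0,T];L^2(\T^2))\hookrightarrow L^2([0,T];\dot\H^{-\alpha}(\T^2))$ by \autoref{prop:pde_homogenisation}, so $w=\bar u$ for a.e. $t$, and both sides are $t$‑continuous into $\dot\H^{-\alpha}(\T^2)$, so $w=\bar u$ on $[0,T]$. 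Since every subsequence has a further subsequence converging to $\bar u$, the whole sequence converges in $C([0,T];\dot\H^{-\alpha}(\T^2))$; as $\alpha>0$ is arbitrary and the topology of $\dot\H^-(\T^2)$ is generated by the seminorms $\norm{\cdot}_{\dot\H^{-\alpha}}$, this is exactly convergence in $C([0,T];\dot\H^-(\T^2))$, and testing against $\phi$ finishes the proof as above.

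The only genuine obstacle is the oscillation of the coefficient field $A^N(x)=A(Nx)$: in the increment estimate one cannot keep the derivatives on $\tilde u^N$ without paying a factor $\norm{\nabla A^N}_{L^\infty}\sim N$, which would destroy uniformity in $N$. Moving the derivatives onto the fixed smooth test function instead costs only one negative derivative (equicontinuity into $\dot\H^{-1}$ rather than into $L^2$), which is harmless here since the target Fréchet space $\dot\H^-(\T^2)$ already encodes arbitrarily negative regularity. Everything else — the energy estimate, the interpolation inequality $\norm{w}_{\dot\H^{-\alpha}}\le\norm{w}_{L^2}^{1-\alpha}\norm{w}_{\dot\H^{-1}}^{\alpha}$, Rellich compactness, Arzelà--Ascoli, and the subsequence‑of‑subsequence identification of the limit — is standard.
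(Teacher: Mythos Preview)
Your proof is correct and follows essentially the same route as the paper. The paper invokes the Aubin--Lions(--Simon) lemma directly, using that $\tilde u^N$ is uniformly bounded in $L^\infty(0,T;L^2)$ and $\partial_t\tilde u^N$ in $L^2(0,T;\mcH^{-1})$ to conclude precompactness in $C([0,T];\mcH^{-\eps})$; you unpack this by hand via Arzel\`a--Ascoli, turning the $\partial_t$ bound into a H\"older-in-time estimate in $\dot\H^{-1}$ and using Rellich for the pointwise-in-time compactness. The ingredients are identical, and your explicit remark that one must keep the derivatives on the test function (so as not to pay a factor $\|\nabla A^N\|_{L^\infty}\sim N$) is exactly what makes the paper's $\partial_t\tilde u^N\in L^2(0,T;\mcH^{-1})$ bound uniform in $N$.
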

\begin{proof}
   Thanks to the boundedness of $A$, it is not difficult to see that $\tilde u_t^N \in L^\infty([0, T]; L^2(\mbT^2))$ and $\partial_t \tilde u^N \in L^2([0, T]; \mcH^{-1}(\mbT^2))$, thus it follows by the Aubin-Lions lemma that $\tilde u^N_t$ is pre-compact in $C([0, T]; \mcH^{-\eps}(\mbT^2))$ for every $\eps>0$. By the uniqueness of the limit, the whole sequence must also converge in $\mcH^-(\mbT^2)$. 
\end{proof}
As an important consequence of our specific set-up we can show that the homogenized matrix $\bar{H}$ is in fact diagonal. We defer the proof of this fact to \autoref{app_A1}. 
\begin{proposition} \label{prop: diagonality}
   For $H_\kappa = \kappa I + A$ and $\bar{H}_\kappa$ as defined by \eqref{eq:hom_H} it holds that $\bar H_k = C(c, \kappa) I$ for some positive constant $C(c, \kappa)\ge \kappa$.
\end{proposition}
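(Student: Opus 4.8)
The plan is to exploit the invariance of the coefficient field $H_\kappa(x) = \kappa I + A(x)$ under the rotation of $\mathbb{T}^2$ by $\pi/2$: this symmetry forces the homogenised matrix to commute with that rotation, and since $\bar H_\kappa$ is already known to be symmetric by \autoref{lem:homogenised_upper_lower_symmetric}\,\ref{it:bar_H_kappa_symmetric}, a $2\times 2$ symmetric matrix commuting with a $\pi/2$ rotation must be a scalar multiple of the identity. The positivity of the scalar is then immediate from \autoref{lem:homogenised_upper_lower_symmetric}\,\ref{it:bar_H_kappa_lower_bnd}.

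First I would record that the coefficient field is $\pi/2$-rotation equivariant. Write $R$ for the linear map $Rx := x^\perp = (-x_2,x_1)$ (counter-clockwise rotation by $\pi/2$); it is orthogonal with $R^\top = R^{-1}$, has $\det R = 1$, and maps $\mathbb{Z}^2$ bijectively onto itself. By \autoref{lem:A_N_rescaling} and the construction of \autoref{sec:definition_of_noise}, $A(x) = A^1(x)$ equals, up to a fixed positive constant, $\sum_{k\in\mathbb{Z}^2}\nabla^\perp\psi(x-k)\otimes\nabla^\perp\psi(x-k)$. Since $\psi$ is radial by \eqref{eq:psi_props}, its gradient is equivariant, $\nabla\psi(Rx) = R\nabla\psi(x)$, whence $\nabla^\perp\psi(Rx) = R\nabla^\perp\psi(x)$ because $\nabla^\perp = R\nabla$. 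Re-indexing the lattice sum by $j = R^{-1}k$ and using $(Rv)\otimes(Rv) = R(v\otimes v)R^\top$ then gives $A(Rx) = RA(x)R^\top$, hence $H_\kappa(Rx) = RH_\kappa(x)R^\top$.

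Next I would transfer this symmetry to $\bar H_\kappa$ through the corrector problem \eqref{eq:corrector}. Note $Re_1 = e_2$ and $Re_2 = -e_1$. Letting $\phi_1,\phi_2\in\dot{\mathcal H}^1(\mathbb{T}^2)$ be the correctors and setting $\tilde\phi := \phi_1\circ R^{-1}$, one checks that $\tilde\phi$ is again admissible in \eqref{eq:corrector} (periodicity survives since $R^{-1}$ preserves $\mathbb{Z}^2$, and $\int_{\mathbb{T}^2}\tilde\phi = 0$ since $|\det R| = 1$). Using $\nabla\tilde\phi(x) = R\nabla\phi_1(R^{-1}x)$, $e_2 = Re_1$, and the previous identity in the form $H_\kappa(x) = RH_\kappa(R^{-1}x)R^\top$, one computes $H_\kappa(x)\bigl(e_2 + \nabla\tilde\phi(x)\bigr) = R\,F(R^{-1}x)$ with $F(y) := H_\kappa(y)(e_1 + \nabla\phi_1(y))$; testing against $\eta\in C^\infty(\mathbb{T}^2)$ and changing variables shows $x\mapsto RF(R^{-1}x)$ is weakly divergence-free, since $\nabla\cdot F = 0$ weakly tested against $\eta\circ R$. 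Hence $\tilde\phi$ solves \eqref{eq:corrector} with $i=2$, and by uniqueness $\phi_2 = \phi_1\circ R^{-1}$. Inserting this into \eqref{eq:hom_H} and changing variables $x = Ry$ gives $(\bar H_\kappa)_{i2} = \int_{\mathbb{T}^2} F(y)\cdot R^\top e_i \, dy$, that is $(\bar H_\kappa)_{22} = (\bar H_\kappa)_{11}$ (as $R^\top e_2 = e_1$) and $(\bar H_\kappa)_{12} = -(\bar H_\kappa)_{21}$ (as $R^\top e_1 = -e_2$). Combined with the symmetry $(\bar H_\kappa)_{12} = (\bar H_\kappa)_{21}$ of \autoref{lem:homogenised_upper_lower_symmetric}\,\ref{it:bar_H_kappa_symmetric}, the off-diagonal entries vanish, so $\bar H_\kappa = C(c,\kappa)I$ with $C(c,\kappa) = (\bar H_\kappa)_{11}$. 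Since $A(x)$ is positive semi-definite we have $H_\kappa(x)\xi\cdot\xi \ge \kappa|\xi|^2$, so \autoref{lem:homogenised_upper_lower_symmetric}\,\ref{it:bar_H_kappa_lower_bnd} with $\lambda = \kappa$ gives $C(c,\kappa) = \bar H_\kappa e_1\cdot e_1 \ge \kappa$, completing the proof.

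The only real obstacle is this transfer step: one must carefully verify that $\phi_1\circ R^{-1}$ is both admissible in \eqref{eq:corrector} (periodic, mean-free, $\dot{\mathcal H}^1$) and an exact solution of the $i=2$ corrector equation, and then appeal to uniqueness. A slicker but less self-contained alternative would be to first establish the standard variational formula $\bar H_\kappa\xi\cdot\xi = \inf\{\int_{\mathbb{T}^2} H_\kappa(\xi + \nabla w)\cdot(\xi+\nabla w)\, dx : w\in\dot{\mathcal H}^1(\mathbb{T}^2)\}$ (valid since $H_\kappa$ is symmetric), which is manifestly invariant under $\xi\mapsto R\xi$ via $x\mapsto Rx$ and instantly yields $R^\top\bar H_\kappa R = \bar H_\kappa$; the remaining linear algebra and the equivariance established above are routine.
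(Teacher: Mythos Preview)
Your proof is correct and follows essentially the same strategy as the paper: exploit the lattice symmetries of $A$, show the correctors inherit these symmetries via uniqueness, and conclude that $\bar H_\kappa$ commutes with the symmetry group. The differences are cosmetic: the paper works with the full dihedral group of isometries of $\mathbb{Z}^2$, conjugated to act about the centre $(\nicefrac12,\nicefrac12)$ of the unit cell, and derives both the vanishing of the off-diagonal entry (from the reflection) and the equality of the diagonal entries (from the rotation) directly; you instead use only the linear $\nicefrac{\pi}{2}$-rotation about the origin together with the symmetry of $\bar H_\kappa$ already established in \autoref{lem:homogenised_upper_lower_symmetric}\,\ref{it:bar_H_kappa_symmetric}, which is a slight economy.
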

\begin{proof}
    See \autoref{sec:homogenised_diagonallity}.
\end{proof}
\subsection{Combined Homogenised and It\^o-Stratonovich Diffusion Limit}\label{sec:combined_homog_ito_limit}
In the previous sections we have shown that the solution $u^N$ to \eqref{eq:main_strat_N} is close, in a weak sense and for $N$ large, to the solution $\tilde u_t$ of a deterministic problem with an additional elliptic operator given by $\nabla\cdot (A(Nx)\cdot \nabla \tilde u_t)$, \eqref{eq:deterministic_discrete}. We have also proved that for $N$ large, $\tilde u^N$ approaches a homogenized limit $\bar{u}$. Combining the results, we obtain the following.
\begin{theorem}\label{th:spde_homogenisation}
    Assume \eqref{ass 1}. Let $u_0 \in L^2_0(\mbT^2)$, $T>0$ and $\bar{u}$ be the associated weak solution to \eqref{eq:homogenised_pde} as in \autoref{prop:pde_homogenisation}. Under our fixed scaling assumptions ($\theta^N_{\,\cdot\,}\equiv r =\nicefrac{1}{N}$), if $u^N$ is the unique weak solution to \eqref{eq:main_ito_N}  on $[0,T]$ with initial condition $u_0$, then, for every $\phi \in C^\infty(\T^2)$, it holds that
    \begin{equation}\label{convergence N}
        \lim_{N\rightarrow +\infty} \sup_{t\in [0, T]}  \EE\left[\left|\brak{u^N_t- \bar{u}_t, \phi}\right|^2\right] = 0. 
    \end{equation}
\end{theorem}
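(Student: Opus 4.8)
The plan is to combine the two convergence results already established: the martingale estimate of \autoref{prop:weak_conv} (which holds under the weaker assumption \eqref{ass 0}) and the homogenisation result of \autoref{prop:pde_homogenisation} together with its corollary \autoref{H-} (both valid under the stronger assumption \eqref{ass 1}, which we have here). Since \eqref{ass 1} implies \eqref{ass 0} with $r = \nicefrac1N$, both results are available simultaneously.

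\begin{proof}
Fix $\phi \in C^\infty(\mbT^2)$; by \autoref{rem:mean_free_test} we may assume $\phi \in C^\infty_0(\mbT^2)$. For each $N$, let $\tilde u^N$ denote the weak solution of the deterministic equation \eqref{eq:deterministic_discrete} with initial datum $u_0$, as in \autoref{prop:weak_conv}. We split
\begin{equation*}
    \EE\left[\left|\brak{u^N_t - \bar u_t, \phi}\right|^2\right] \leq 2\,\EE\left[\left|\brak{u^N_t - \tilde u^N_t, \phi}\right|^2\right] + 2\left|\brak{\tilde u^N_t - \bar u_t, \phi}\right|^2,
\end{equation*}
the second term being deterministic. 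For the first term, \autoref{prop:weak_conv} gives, uniformly in $t \in [0,T]$,
\begin{equation*}
    \EE\left[\left|\brak{u^N_t - \tilde u^N_t, \phi}\right|^2\right] \leq \frac{C(c)\, r^2}{\kappa}\,\|\phi\|_{L^\infty_x}^2\,\EE\left[\|u_0\|_{L^2_x}^2\right] = \frac{C(c)}{\kappa N^2}\,\|\phi\|_{L^\infty_x}^2\,\|u_0\|_{L^2_x}^2,
\end{equation*}
using $r = \nicefrac1N$ and that $u_0$ is deterministic; this tends to $0$ as $N \to \infty$, uniformly in $t$. For the second term, \autoref{H-} states precisely that $\sup_{t\in[0,T]}\left|\brak{\tilde u^N_t - \bar u_t, \phi}\right| \to 0$ as $N \to \infty$. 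Taking $\sup_{t\in[0,T]}$ of the sum of the two bounds and letting $N \to \infty$ yields \eqref{convergence N}.
\end{proof}

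There is essentially no obstacle here: the two hard analytic inputs — the It\^o-isometry martingale estimate and the periodic elliptic homogenisation theorem — have already been proven, and the only work is to record that assumption \eqref{ass 1} makes both applicable and that the quantitative rate $r^2/\kappa = 1/(\kappa N^2)$ from \autoref{prop:weak_conv} is uniform in time while the homogenisation convergence in $C([0,T];\mcH^-)$ handles the deterministic piece uniformly in time as well. The one point to be careful about is that \autoref{prop:weak_conv} is stated with a random initial datum $u_0^N \in L^2(\Omega,\mcF_0,H)$, whereas here $u_0$ is deterministic, so one should just note that the estimate specialises and $\EE[\|u_0\|_{L^2}^2] = \|u_0\|_{L^2}^2$; and that both $\tilde u^N$ (as the deterministic comparison in \autoref{prop:weak_conv}) and $\bar u$ (its homogenised limit) are built from the same $u_0$, so the telescoping is legitimate.
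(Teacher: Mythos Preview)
Your proof is correct and follows essentially the same approach as the paper's own proof: both split via the intermediate deterministic solution $\tilde u^N$, apply \autoref{prop:weak_conv} with $r=\nicefrac{1}{N}$ to the stochastic piece and \autoref{H-} to the deterministic homogenisation piece, and conclude by taking $N\to\infty$. The paper phrases the final step as an $\eps$-argument rather than a direct limit, but the content is identical.
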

\begin{proof}
    By \autoref{prop:weak_conv},
    \begin{equation*}
        \sup_{t\in [0, T]} \EE\left[\left|\brak{u^N_t- \tilde u_t^N, \phi}\right|^2\right] \le  \frac{1}{N^2\kappa}\|u_0\|_{L^2}^2.
    \end{equation*}
    Adding and subtracting $\tilde u_t^N$ inside \eqref{convergence N} we have 
    \begin{align}
     \sup_{t\in [0, T]}   \EE\left[\left|\brak{u^N_t- \bar{u}_t, \phi}\right|^2\right] &\le 2\sup_{t\in [0, T]} \EE\left[\left|\brak{u^N_t- \tilde u_t^N, \phi}\right|^2\right] + 2\sup_{t\in [0, T]}\left|\brak{\tilde u^N_t- \bar{u}_t, \phi}\right|^2 \\
        &\lesssim \frac{1}{N^2\kappa}\|u_0\|_{L^2}^2 + 2 \sup_{t\in [0, T]}\left|\brak{\tilde u^N_t- \bar{u}_t, \phi}\right|^2.
    \end{align}
   Thus, thanks to \autoref{H-}, for every $\eps>0$ we can find an $N_0\coloneqq N_0(\eps)\in \mbN$ such that for every $N>N_0$, 
   $$\sup_{t\in [0, T]}\left|\brak{\tilde u^N_t- \bar{u}_t, \phi}\right|^2 \le \eps$$
   we conclude that 
   \begin{equation*}
       \limsup_{N\rightarrow +\infty} \sup_{t\in [0, T]}\EE\left[\left|\brak{u^N_t- u_t, \phi}\right|^2\right] \le \eps
   \end{equation*}
   and the result follows by the arbitrariness of $\eps$. 
\end{proof}
As a result of \autoref{th:spde_homogenisation} we can improve the mixing estimate given by \autoref{dissipation piN}, replacing the role of $\tilde{u}^N$ therein with $\bar{u}$ which is crucially independent of $N$. 
\begin{corollary}\label{cor: diss_hom}
 Assume \eqref{ass 1}, then for every $\eps >0$, there exists $N$ large enough such that 
   \begin{equation*}
       \EE \left[\left(  \int_{\T^2}u^N(t,x)\phi(x)\dd x\right)^2\right] \lesssim_\phi 2\left(\eps + \exp (-2C(c, \kappa)t)\right) \expt{\|u^N_0\|^2_{L^2}}.
   \end{equation*}
\end{corollary}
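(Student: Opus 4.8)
The plan is to combine the mean-square convergence of \autoref{th:spde_homogenisation} with the explicit exponential energy decay of the homogenised equation $\partial_t \bar u = C(c,\kappa)\Delta\bar u$ (recall $\bar H_\kappa = C(c,\kappa)I$ by \autoref{prop: diagonality}). As in the proof of \autoref{dissipation piN}, one first observes that the left-hand side is unchanged when $\phi$ is replaced by $\phi - m$, $m$ being the average of $\phi$ over $\T^2$, since $u^N_t$ is mean-free; hence we may assume $\phi \in C^\infty_0(\T^2)$. Then, applying $a^2 \le 2(a-b)^2 + 2b^2$ with $a = \brak{u^N_t,\phi}$ and $b = \brak{\bar u_t,\phi}$, it is enough to estimate the random quantity $\EE\!\left[\brak{u^N_t-\bar u_t,\phi}^2\right]$ and the deterministic quantity $\brak{\bar u_t,\phi}^2$ separately.

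First, by \autoref{th:spde_homogenisation} (used with $u^N_0 = u_0$ deterministic, so that $\EE\!\left[\norm{u^N_0}_{L^2}^2\right] = \norm{u_0}_{L^2}^2$), the quantity $\sup_{t\in[0,T]}\EE\!\left[\brak{u^N_t-\bar u_t,\phi}^2\right]$ tends to $0$ as $N\to\infty$; so for any prescribed $\eps>0$ we pick $N$ large enough that this supremum is at most $\eps\,\EE\!\left[\norm{u^N_0}_{L^2}^2\right]$, which furnishes the desired $\eps$-term.

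Second, for the deterministic term, the plan is to use Cauchy--Schwarz, $\brak{\bar u_t,\phi}^2 \le \norm{\phi}_{L^2}^2\,\norm{\bar u_t}_{L^2}^2$, together with the energy identity for the homogenised PDE: testing it against $\bar u_t$ gives $\frac{\dd}{\dd t}\norm{\bar u_t}_{L^2}^2 = -2C(c,\kappa)\norm{\nabla\bar u_t}_{L^2}^2$. Since $\bar u_t$ is mean-free, the first nonzero eigenvalue $4\pi^2 \ge 1$ of $-\Delta$ on $\T^2$ together with $C(c,\kappa)\ge\kappa>0$ yields $\frac{\dd}{\dd t}\norm{\bar u_t}_{L^2}^2 \le -2C(c,\kappa)\norm{\bar u_t}_{L^2}^2$, whence Gr\"onwall's inequality gives $\norm{\bar u_t}_{L^2}^2 \le e^{-2C(c,\kappa)t}\norm{u_0}_{L^2}^2$. (Alternatively this can be read directly off the Fourier expansion of $\bar u_t = e^{tC(c,\kappa)\Delta}u_0$.)

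Putting the two bounds together, multiplying by $2$ and absorbing $\max(1,\norm{\phi}_{L^2}^2)$ into the $\lesssim_\phi$ constant, yields exactly the stated estimate. I do not expect a genuine obstacle here: the proof is a direct assembly of results already established, and the only points needing some care are the bookkeeping of the $\EE[\norm{u^N_0}_{L^2}^2]$ prefactor — resolved by taking the initial datum deterministic, consistent with \autoref{th:spde_homogenisation} — and keeping the exponential rate equal to $2C(c,\kappa)$, which is precisely why the Poincaré constant of $\T^2$ (at least $1$) is invoked rather than dropped.
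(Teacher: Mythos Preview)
Your proposal is correct and follows essentially the same approach as the paper: the paper's proof simply instructs the reader to rerun the argument of \autoref{dissipation piN} with the pivot $\tilde u^N$ replaced by $\bar u$, invoking \eqref{convergence N} and \autoref{prop: diagonality}, which is precisely what you do. The only cosmetic difference is that the paper (via the template of \autoref{dissipation piN}) controls $\brak{\bar u_t,\phi}^2$ through $\|\phi\|_{L^\infty}^2\|\bar u_t\|_{L^2}^2$ rather than your Cauchy--Schwarz bound with $\|\phi\|_{L^2}^2$, but both are absorbed into the $\lesssim_\phi$ constant.
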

\begin{proof}
   Follow exactly the proof of \autoref{dissipation piN}, only replacing the pivot around $\tilde{u}^N$ therein with one around $\bar{u}$ and appealing to \eqref{convergence N} along with \autoref{prop: diagonality}.
    %
\end{proof}
\begin{remark}
    The statement of this corollary contrary to \autoref{dissipation piN} is not quantitative in $N$. Obtaining a quantitative estimate is possible, see for instance \cite{shen_2018_periodic}, but would have taken us away from the main scope of this manuscript, therefore we reserve a refined version of this statement for future works. 
\end{remark}
\begin{remark}
   It is a classic result that it actually holds $\pi_N^\kappa\rightarrow C(c, \kappa)$ as $N\rightarrow +\infty$. This fact can be proven by means of $\Gamma$-Convergence, employing the variational setting that we introduce in \autoref{sec: variational}. We refer the reader to \cite[Chapter 24]{maso1993introduction}, and in particular Theorem 24.1. 
\end{remark}
\section{Analysis of Homogenised Diffusivity}\label{sec:diffusivity}
\autoref{th:spde_homogenisation} and \autoref{prop: diagonality} shows that our stochastic equation \eqref{eq:main_ito_N} is arbitrarily close to an heat equation \eqref{eq:homogenised_pde} with diffusivity $C(c, \kappa)$. The goal of the rest of this paper is to give quantitative estimates on $C(c, \kappa)$.  Recall that \autoref{lem:homogenised_upper_lower_symmetric} shows that the homogenized diffusivity $C(c, \kappa)$ satisfies

\begin{equation}\label{eq:homogenised_turb_linear_lower}
            C(c,\kappa)\geq  (\kappa +\lambda_A)
        \end{equation}
where the factor $$\lambda_A = \inf _{x\in \T^2}\sup _{\xi \neq 0}\frac{\xi^T A(x)\xi}{|\xi|^2}$$ is the uniform ellipticity constant of $A(x)$. This additional factor, as shown in the previous sections, is due to the spatial structure of the noise, and can be made arbitrarily large, as recalled in the introduction, by letting the supports of the vector fields $\sigma_k^N$ overlap sufficiently and have large enough intensities (\textit{c.f.} \autoref{prop:pointwise_ellipticity}). However, we additionally consider situations in which $\lambda_A=0$ and demonstrate that, in some regimes, some enhancement of dissipation ($C(c,\kappa) \ge (\kappa + m)$) is still possible. Secondly, we investigate the asymptotics of $C(c, \kappa)$ as $\kappa \rightarrow 0$, identifying different regimes based on the level of overlap of the vortex patches.  
See \cite{fannjiang_papanicolaou_94} for a similar situation with a carefully constructed cellular transport.

Before proceeding with this analysis we study finer properties of the matrix $A^N(x)$ for varying $c>0$. This is the content of the next subsection.
\subsection{Properties of the covariance Matrices}
We discuss some properties of the matrices $A^N(x)$. 
We remark that if $c\le 1$, that is each vortex patch has support contained in the unit ball, then the only non-zero summands in the definition of $A(x)$ are those indexed by $k\in\,\Box^2_1 = \{ (0,0), \ (1,0), \ (0,1), \ (1,1)\}$. On the other hand, if $c\ge 1$, then there are contributions from vector fields with centres outside the unit square. However, in both cases $A(x)$ corresponds to the periodization of one single vortex patch of radius $c>0$.

In order to understand ellipticity properties of $A(x)$, we begin by studying its degeneracies. For every $x\in \T^2$, the matrix $A(x)$ is positive semidefinite by definition, so we are interested in the regions where it fails to have full rank. We also keep in mind that when $c\leq1$, $A(x)$ has zeroes at the lattice points $k\in \Z^2$ as a consequence of smoothness radial symmetry and the support of $\psi$.
\begin{figure}[H]
    \begin{center}
      \setlength{\unitlength}{\textwidth}
        \includegraphics[width=.8\textwidth]{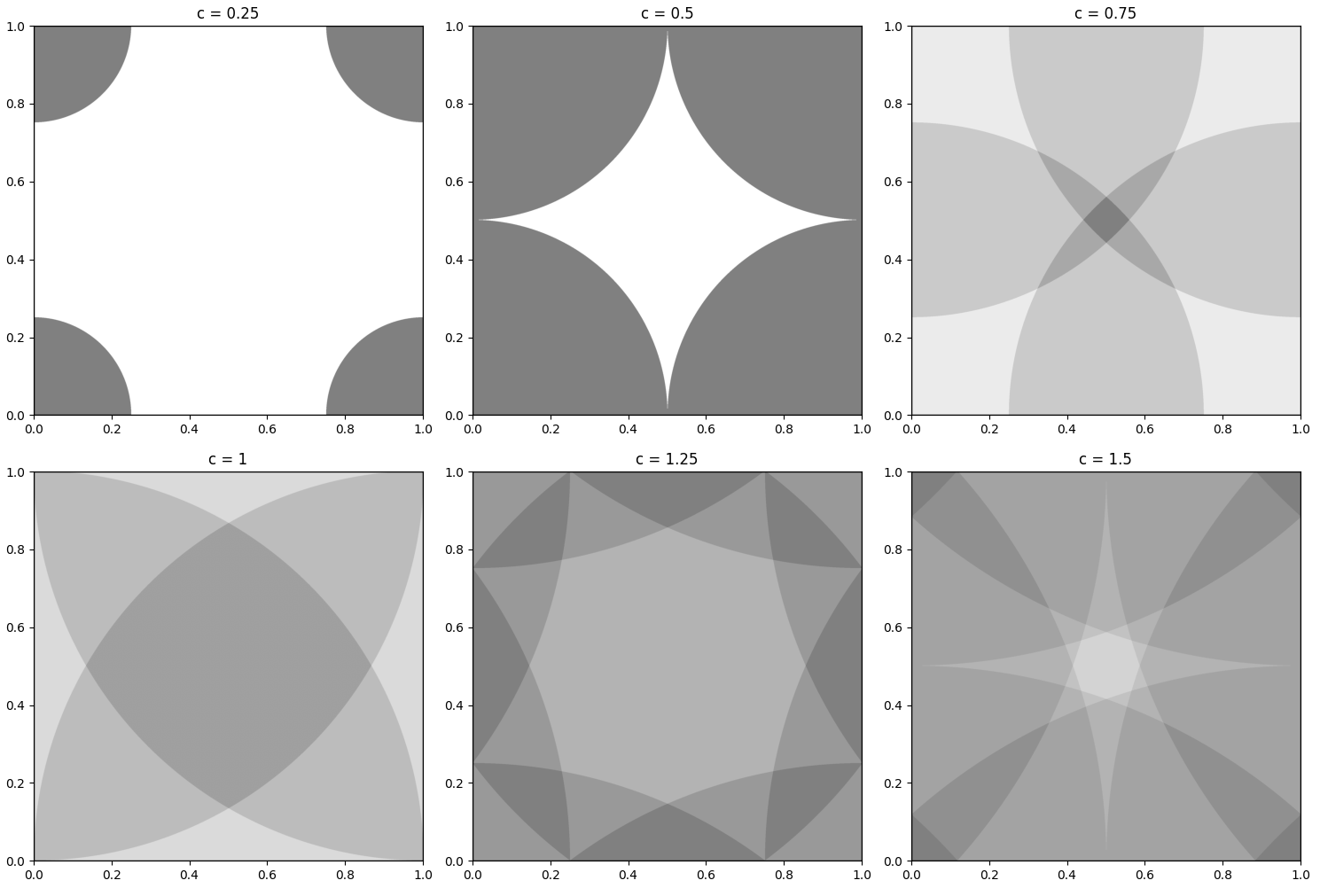}   
            \end{center}
    \caption{Illustration of the supports of $A(x)$ for selected $c \in \{0.25,\, 0.5,\, 0.75,\, 1,\, 1.25,\, 1.5\}$. Darker shading represent multiple overlaps.}
\label{fig:add_visc} \end{figure}
\begin{lemma}\label{lem:A_cases}
    The following situations all hold.
    \begin{enumerate}
        \item If $c = 1/2- \delta$ for some $\delta\in (0,\nicefrac{1}{2})$ then 
        \begin{equation*}
          \Big\{(x_1, x_2)\in \T^2 : \min(|x_1 - 1/2|, |x_2 - 1/2|)\le 2\delta \Big\} \subset  \supp(A)^c.
        \end{equation*}
        \item If $1/2 \leq c < \nicefrac{\sqrt{2}}{2}$, then letting $\delta = \nicefrac{\sqrt{2}}{2}-c$, it holds that
        \begin{equation*}
            \Big\{x\in \T^2 : \min_{k\in \{(0,0),(0,1), (1,0), (1,1)\}}(|x-k|)\ge \frac{\sqrt 2}{2}-\delta \Big\} =   \supp(A)^c.
        \end{equation*}
        \item If $\sqrt{2}/2 \leq c \le 1$ then letting $\delta = 1-c$ it holds that $A$ has a null eigenvalue in the region 
        \begin{equation*}
          \Big \{x\in \mbT^2\,:\,  |x-k| \ge 1-\delta \text{ for exactly three $k$ out of }(0,0), (0,1), (1,0), (1,1) \Big\}
        \end{equation*}
        and on the edges of the square $\partial([0,1]^2)$, while it has full rank in the rest of $[0,1]^2$.
        \item Let $1 \leq c \leq \nicefrac{\sqrt{5}}{2}$, then setting $\delta= \nicefrac{\sqrt{5}}{2} -c$ it holds that $A$ has full rank outside of the region 
        %
%
     \begin{equation}
    \bigg\{x \in \mbT^2 \,:\, \exists \, a \in  B\bigg(\nicefrac{1}{2},\sqrt{\nicefrac{1}{4}+ \delta^2-\sqrt{5}\delta}\bigg)  \,\, \text{s.t. }\,  x \in \{(a, 0),\,(a, 1),\, (0, a),\,(1, a)\} \,\, \bigg\}.
\end{equation}
        Furthermore, in this region it has one null eigenvalue.
        \item If $c > \sqrt{5}/2$ then $A$ is strictly positive definite everywhere on $\T^2$. \label{it:A_regions_5}
    \end{enumerate}
\end{lemma}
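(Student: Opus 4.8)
The plan is to reduce the rank computation for $A(x)=\sum_{k\in\Z^2}\sigma_k(x)\otimes\sigma_k(x)$ to a purely combinatorial question about lattice points inside a Euclidean disk, and then to run an elementary planar-geometry bookkeeping for each of the five ranges of $c$. Since $A(x)$ is a finite sum of rank-$\le 1$ positive semidefinite matrices, for every $\xi\in\mbR^2$ one has $\xi^\top A(x)\xi=\sum_{k}|\sigma_k(x)\cdot\xi|^2$; hence $A(x)$ has a nontrivial kernel iff all nonzero vectors among $\{\sigma_k(x)\}_k$ are parallel, $A(x)=0$ iff every $\sigma_k(x)=0$, and $A(x)$ is strictly positive definite iff two of the $\sigma_k(x)$ are nonzero and not parallel.

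The geometric content enters through the explicit form of $\sigma_k$: one reads off that $\sigma_k(x)\neq 0$ precisely when the node $x_k$ lies in the punctured support disk of $\sigma_k$ — under the standing assumptions on $\psi$ (whose support disk we take of radius exactly $c$) this is $x_k\in B(x,c)\setminus\{x\}$ — and that in that case $\sigma_k(x)\perp(x-x_k)$. Thus two ``active'' patches $\sigma_k(x),\sigma_{k'}(x)$ are parallel exactly when $x,x_k,x_{k'}$ are collinear, and we obtain the dichotomy
\[
A(x)\ \text{is positive definite}\iff\exists\,k\neq k'\ \text{with}\ x_k,x_{k'}\in B(x,c)\setminus\{x\}\ \text{not collinear with}\ x,
\]
while $A(x)$ degenerates exactly on the set where every lattice point in $B(x,c)$ lies on one line through $x$; this set contains $\supp(A)^c=\{x:\operatorname{dist}(x,\Z^2)\ge c\}$ (no node in the disk) and the regions where the disk meets exactly one node.

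Each item of the lemma is now a statement about how many integer points, and in which configuration, fit into a disk of radius $c$; the thresholds $1/2,\sqrt2/2,1,\sqrt5/2$ are respectively half the nearest-neighbour spacing, half the diagonal spacing, the nearest-neighbour spacing (beyond which patches reach into neighbouring cells), and half of $|(0,0)-(1,2)|=\sqrt5$ (equivalently the distance from an edge midpoint $(\tfrac12,0)$ to the off-edge node $(0,1)$). Concretely: for $c<1/2$ a disk holds at most one node, so $A$ is never full rank and one only rewrites $\{x:\operatorname{dist}(x,\Z^2)\ge c\}$ in the stated cross-shaped form; for $1/2\le c<\sqrt2/2$ at most two nodes fit, and only if they are horizontal/vertical neighbours, so again $\supp(A)^c=\{x:\operatorname{dist}(x,\Z^2)\ge c\}$, re-expressed as a region around the cell centre; for $\sqrt2/2\le c\le 1$ diagonal pairs and $L$-shaped triples of the four cell corners can occur, and one checks that $A$ degenerates precisely on the disks that touch a single corner (the ``exactly three corners at distance $\ge c$'' set) and on $\partial([0,1]^2)$ (where the active nodes all sit on the edge line), being full rank elsewhere; for $1\le c\le\sqrt5/2$ nodes one cell away enter, and the degenerate locus contracts to short segments centred at the edge midpoints, whose size is fixed by the condition that the off-edge nodes $(0,1),(1,1),\dots$ just fail to lie in $B(x,c)$; and for $c>\sqrt5/2$ one exhibits at every $x$ — in particular at edge midpoints and corners — two active nodes not collinear with $x$, so $A>0$ everywhere.

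The trigonometric identification of the exact boundary curves (the formulas involving $\delta$) is routine; the delicate point, and what I expect to be the real work, is excluding \emph{spurious} degeneracies in the regimes $\sqrt2/2\le c\le 1$ and $1\le c\le\sqrt5/2$. For instance, a point on the cell diagonal $\{x_1=x_2\}$ that lies in both opposite corner-disks $B((0,0),c)$ and $B((1,1),c)$ looks degenerate (its two active corner-nodes are collinear with it), but a short estimate shows such a point automatically lies in $B((1,0),c)$ as well, restoring full rank; similarly one has to keep track of the torus identifications near $\partial([0,1]^2)$ so that edge nodes inherited from adjacent cells are counted correctly. This bookkeeping, rather than any single hard idea, is the substance of the proof.
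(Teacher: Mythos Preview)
Your proposal is correct and follows essentially the same route as the paper: both reduce the rank of $A(x)$ to the question of which lattice centres lie in $B(x,c)$ and whether the active ones are collinear with $x$, then run a case analysis on $c$ against the thresholds $\tfrac12,\tfrac{\sqrt2}{2},1,\tfrac{\sqrt5}{2}$. If anything you are slightly more careful than the paper on one point: in the regime $\tfrac{\sqrt2}{2}\le c\le 1$ the paper asserts that two-active configurations are collinear with $x$ ``only on the edges of the square'' without explicitly ruling out the diagonal, whereas you note (correctly) that a diagonal point seeing both $(0,0)$ and $(1,1)$ automatically also sees an adjacent corner, so the apparent degeneracy is spurious.
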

\begin{proof}
    Before proving each statement it is useful to picture the configuration of the patches $\sigma_k$. We have one patch on each integer lattice point $k\in \Z^2$. As we already noted, at each location $x \in \T^2$ only a finite number of vortices are active, each of which has support exactly equal to the ball of radius $c$. For instance, if $c < 1$ it is enough to consider only four patches located at the corners of the square. We will derive our statements using geometric considerations of the intersections of the patches supports. The regions where $A\equiv 0$ are those that lie outside of the support of all vortices, so they can only be present when the torus is not fully covered by balls of radius $c$ located at integer lattice points.
    \begin{enumerate}
        \item By our previous considerations, it is straightforward to see that the region in the first statement is made of two strips that separate the supports of the patches $\sigma_k$ located at the corners of the square. 
        \item In the second situation, the supports of the patches do overlap  but the radius $c$ is too small to reach the centre of the square, leaving out the diamond shaped region described in the statement, where $A\equiv 0$. 
        
        \item In this situation the torus is fully covered. The matrix field is again given by 
        \begin{equation}
            A(x)= \sum_{k\in\,\Box^2_1}\nabla^\perp \psi({x-k})\otimes \nabla^\perp \psi({x-k}).
        \end{equation}
        The matrix $A$ is the sum of orthogonal projections along $\nabla^\perp\psi(x-k)$ for $k\in \Z^2$. 
        Notice that the region described in the statement of the lemma is exactly the region where only one patch out of the four is active, thus $A$ is a single projection, and cannot have full rank. In the remaining region, either two three or four patches are active. Recall that $\psi$ is radial, and thus, for every $x\in \T^2$, $\nabla^\perp\psi(x-k)$ is parallel to $(x-k)^\perp$. In the region where only two vortices are active, $x-k_1$ and $x-k_2$ are linearly independent unless $x$ lies in the span of $k_1-k_2$, which happens only on the edges of the square. Where either three or four patches are active, we argue in the following way. Suppose, for a contradiction, that there exists $w\in \R^2\setminus \{0\}$ such that $w\cdot  A(x)w=0$. This would mean that $\sum_{k\in\,\Box^2_1}|\nabla^\perp\psi(x-k)\cdot w|^2 =0$ but this can happen if and only if $\omega \perp (x-k)$ for all active $k$. We conclude that if the $k$ are not all collinear, we must have $w=0$. 
        
        \item In this case we reason as above, but we can prove that the system of vectors $\{\nabla^\perp\psi(x-k)\}_{k\in \Box^2_1}$ spans $\R^2$ for every $x\in \T^2$ except on the edges of the square. Since $c>1$, at each point $x\in \T^2$ there are at least two or three active patches who's centres are not co-linear. Since $A(x)$ must have full rank wherever there are more than two active vortices we are left with the region where only two vortices are active. Reasoning as in the previous case, $A(x)$ has rank $1$ in this region only along the segment connecting the centres of the blobs. This region is a subset of the edges of the square described as in the statement.
        \item Finally, in the last situation, at each point of the square there are active at least three active vortices with centres that are not collinear and hence $A$ has full rank everywhere.
    \end{enumerate}
\end{proof}
\begin{remark}\label{rem:A_kernel}
    The proof of \autoref{lem:A_cases} allows us to exhibit the null eigenspace of the matrix $A(x)$ in the regime $1\le c < \sqrt{5}/2$: it coincides with the direction parallel to the boundary of the square $\partial [0,1]^2$, i.e. for $1\le c < \sqrt{5}/2$
    \begin{align*}
        A(x)e_1 &= 0 \qquad \text{for }\ x=(x_1, 0) \text{ or  } \, x=(x_1, 1), \\
    A(x)e_2 &= 0 \qquad \text{for }\ x=(0, x_2) \ \text{ or }\ x=(1, x_2).
    \end{align*}
    Moreover, for $c\ge 1$, and $x$ in the set of points where $A$ is invertible, there is a simple expression for the inverse of $A$ given by 
    \begin{equation}\label{A inverse}
        A^{-1}(x)= \frac{1}{\det(A(x))}\sum_{k\in \mbZ^2}\nabla \psi(x-k)\otimes \nabla \psi(x-k)
    \end{equation}
    Indeed, for a symmetric matrix $A=(a, b; b , c)$, the inverse is given by $A^{-1}= (\det A)^{-1} (c, -b; -b, a)$
\end{remark}
In the scenario \ref{it:A_regions_5} of \autoref{lem:A_cases} we can in fact quantify the ellipticity of $A$. First we require a simple geometric lemma, whose proof is deferred to \autoref{A1_b}.
\begin{lemma}\label{lem:geometry_fact}
    For any $x\in \T^2$ and $v\in \mathbb{S}^1$ it is always possible to find an integer point $\bar k \in \Box^2_1$ such that
   \begin{equation}\label{eq:ellipticity_geometry_fact}
        \Bigg|\, v \cdot \frac{(x-\bar k)^\perp}{|x-\bar k|} \Bigg|> \frac{1}{\sqrt{2}}.
    \end{equation}
   Moreover, $\bar k$ can always be chosen such that $|x- \bar k|\le \sqrt{5}/2$.
\end{lemma}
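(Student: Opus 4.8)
The plan is to translate \eqref{eq:ellipticity_geometry_fact} into a statement about angles and then into a covering problem for $\Z^2$. Put $w_k:=x-k$. Since $\{v,v^\perp\}$ is orthonormal, $|v\cdot w_k|^2+|v^\perp\cdot w_k|^2=|w_k|^2$, so $\big|v\cdot\tfrac{w_k^\perp}{|w_k|}\big|=\big|v^\perp\cdot\tfrac{w_k}{|w_k|}\big|>\tfrac1{\sqrt2}$ is equivalent to $\big|v\cdot\tfrac{w_k}{|w_k|}\big|<\tfrac1{\sqrt2}$, that is, to the unoriented angle between $v$ and $x-k$ lying strictly between $\pi/4$ and $3\pi/4$. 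Identifying unoriented directions with $\R/\pi\Z$ and writing $[u]$ for the class of $u\in\R^2\setminus\{0\}$, \eqref{eq:ellipticity_geometry_fact} for a given $\bar k$ asserts exactly that the distance in $\R/\pi\Z$ from $[x-\bar k]$ to $[v]$ exceeds $\pi/4$. Since, for fixed $v$, the ``forbidden'' directions (those within $\pi/4$ of $[v]$) form an \emph{open} arc of length $\pi/2$, the lemma reduces to the purely geometric claim: for every $x\in\R^2$, the finite set of directions $\{[x-k]:k\in\Z^2,\ |x-k|\le\sqrt5/2\}$ is contained in no open arc of length $\pi/2$ of $\R/\pi\Z$.

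To prove this I would first reduce the position of $x$. Both the set of lattice points within $\sqrt5/2$ of $x$ and the property of being contained in a short arc are invariant under the group generated by translations in $\Z^2$ and the dihedral symmetries $D_4$ of the unit square, so we may assume $x=(x_1,x_2)$ with $0\le x_1\le x_2\le\tfrac12$. For such $x$ the three corners $(0,0),(1,0),(0,1)$ are always within $\sqrt5/2$ — indeed $|x|\le\tfrac{\sqrt2}2$, while $|x-(1,0)|^2=(1-x_1)^2+x_2^2\le 1+\tfrac14$ and $|x-(0,1)|^2=x_1^2+(1-x_2)^2\le\tfrac14+1$ — and one computes $[x-(0,0)]\in[\tfrac\pi4,\tfrac\pi2]$, $[x-(0,1)]\in[\tfrac\pi2,\tfrac{3\pi}4]$, $[x-(1,0)]\in[\tfrac{3\pi}4,\pi]$, with the middle class always lying between the other two on $\R/\pi\Z$. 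A short computation (comparing $\arctan\frac{x_2}{1-x_1}$ with $\arctan\frac{x_1}{x_2}=\tfrac\pi2-\arctan\frac{x_2}{x_1}$) shows that these three directions already fail to fit in an open arc of length $\pi/2$ precisely when $x_1^2+x_2^2\le x_1$, i.e.\ when $x$ lies in the closed disk $B\big((\tfrac12,0),\tfrac12\big)$; in that range the claim is proved. If instead $x$ lies outside that disk, I would bring in a fourth lattice point — either $(1,1)$ or $(-1,0)$ according to the sub-region — check by elementary distance estimates that the relevant one is still within $\sqrt5/2$ of $x$, and verify that adjoining its direction pushes the spread of the direction set above $\pi/2$.

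Granting the geometric claim, the lemma is immediate: for the given $v$, choose any $\bar k\in\Z^2$ with $|x-\bar k|\le\sqrt5/2$ whose direction $[x-\bar k]$ avoids the forbidden arc around $[v]$; then $\bar k$ satisfies \eqref{eq:ellipticity_geometry_fact}. I expect the genuine work to be the case analysis when $x$ lies outside $B\big((\tfrac12,0),\tfrac12\big)$: the fundamental triangle must be split according to which of $(1,1)$, $(-1,0)$ falls within distance $\sqrt5/2$, and the constant $\sqrt5/2$ enters exactly through these distance comparisons, so this is the step requiring care.
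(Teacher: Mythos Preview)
Your reformulation into an arc-covering problem on $\R/\pi\Z$ is correct and matches the paper's cone reformulation in spirit. After that shared first step, however, you take a genuinely different route. The paper avoids all case analysis: it observes that the four angles $\angle(k_i,x,k_{i+1})$ at $x$ sum to $2\pi$ and are each at least $\pi/4$, so by pigeonhole some adjacent pair of directions is separated by an angle in $[\pi/4,\pi/2)$, which forces the corresponding ``bad'' cones to have trivial intersection; the distance refinement is then handled by a short supplementary angle argument still using only the four corners. Your approach instead reduces by $D_4$ symmetry to a fundamental triangle and proceeds by explicit case analysis on which lattice points are close enough. The paper's argument is slicker and stays entirely within $\Box^2_1$; yours is more hands-on and constructive.

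There is a genuine gap in your plan. The lemma asks for $\bar k\in\Box^2_1=\{(0,0),(1,0),(0,1),(1,1)\}$, but your symmetry reduction invokes translations by $\Z^2$ (which do not preserve $\Box^2_1$), and your fourth auxiliary point $(-1,0)$ lies outside $\Box^2_1$. Concretely, take $x=(0,\tfrac14)$ in your fundamental triangle: the three close corners $(0,0),(1,0),(0,1)$ give only two distinct directions in $\R/\pi\Z$ and do not suffice, while $(1,1)$ is at distance $5/4>\sqrt5/2$; your remedy $(-1,0)$ works but is not an admissible $\bar k$. So as written your argument would establish only the variant with $\bar k\in\Z^2$ and $|x-\bar k|\le\sqrt5/2$ (which, to be fair, is what the application in the paper actually uses). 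A smaller point: the ``forbidden'' set for fixed $v$ is the \emph{closed} arc of length $\pi/2$ around $[v]$, not the open one, so you need the direction set to avoid every closed arc of length $\pi/2$; this matters at symmetric points such as $x=(\tfrac12,\tfrac12)$.
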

\begin{proof}
    See \autoref{A1_b}.
\end{proof}
\begin{proposition}\label{prop:pointwise_ellipticity}
    Let $\theta_k^N\equiv r=\frac{1}{N}$  and $c>\sqrt{5}/2$. Then, there exists a constant $M$ such that for every $\xi \in \R^2\setminus \{0\}$ 
    \begin{equation}
        A^N(x)\,  \xi \cdot \xi \ge M|\xi|^2,\quad \text{for all }x\in \mbT^2.
    \end{equation}
    Moreover, it holds that 
    \begin{equation}\label{eq:little_m_def}
        M\geq m\coloneqq  \inf\big\{|f'(y)|, \, y \in B(0, \sqrt{5}/2)\big\}.
    \end{equation} 
\end{proposition}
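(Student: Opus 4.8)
The plan is to prove the pointwise lower bound by expanding the quadratic form $A^N(x)\xi\cdot\xi$ as a sum of squared projections onto the active vortex directions and then using the geometric \autoref{lem:geometry_fact} to guarantee that at least one summand is bounded below. Recall that under the scaling $\theta^N_{\,\cdot\,}\equiv r=\nicefrac1N$ we have $A^N(x)=A^1(Nx)=A(Nx)$ by \autoref{lem:A_N_rescaling}, so it suffices to prove the bound for $A(x)=\sum_{k\in\mbZ^2}\nabla^\perp\psi(x-k)\otimes\nabla^\perp\psi(x-k)$ with $x\in\T^2$ arbitrary. Using radiality, $\nabla^\perp\psi(x-k)=f'(|x-k|)\,\frac{(x-k)^\perp}{|x-k|}$, so for any unit vector $\xi\in\mathbb{S}^1$,
\begin{equation*}
    A(x)\xi\cdot\xi = \sum_{k\in\mbZ^2}\bigl(\nabla^\perp\psi(x-k)\cdot\xi\bigr)^2 = \sum_{k\in\mbZ^2}\bigl|f'(|x-k|)\bigr|^2\,\Bigl|\xi\cdot\tfrac{(x-k)^\perp}{|x-k|}\Bigr|^2.
\end{equation*}

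First I would invoke \autoref{lem:geometry_fact}: given $x\in\T^2$ and $\xi\in\mathbb{S}^1$, there exists $\bar k\in\Box^2_1$ with $\bigl|\xi\cdot\frac{(x-\bar k)^\perp}{|x-\bar k|}\bigr|>\tfrac1{\sqrt2}$ and $|x-\bar k|\le \sqrt5/2$. Keeping only the $k=\bar k$ term in the sum above (all other terms are nonnegative), we get
\begin{equation*}
    A(x)\xi\cdot\xi \ge \bigl|f'(|x-\bar k|)\bigr|^2\cdot\frac12 \ge \frac12\,\Bigl(\inf_{y\in B(0,\sqrt5/2)}|f'(y)|\Bigr)^2.
\end{equation*}
Since $c>\sqrt5/2$ means the patch support $B(0,c)$ strictly contains $B(0,\sqrt5/2)$, the vector field $\nabla^\perp\psi$ and hence $f'$ is genuinely active on all of $B(0,\sqrt5/2)$, so this infimum is the natural quantity; a small point to note is that one should check $f'$ does not vanish identically on that ball, which is part of the standing hypothesis that $\psi$ is a nontrivial radial stream function supported in $B(0,c)$ (otherwise the statement $M\ge m$ is vacuous with $m=0$ but the inequality still holds trivially). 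Setting $M$ to be the actual infimum of $A(x)\xi\cdot\xi$ over $x\in\T^2$ and $\xi\in\mathbb{S}^1$ — which is attained and positive by the above, using continuity and compactness — gives the claimed uniform bound $A^N(x)\xi\cdot\xi\ge M|\xi|^2$ for all $\xi$ by homogeneity, and the chain of inequalities yields $M\ge \frac12 m^2$ in terms of the quantity in \eqref{eq:little_m_def}; more honestly one should define $m$ exactly as in \eqref{eq:little_m_def} and the bound reads $M\ge$ that expression, possibly up to the factor from the $\tfrac1{\sqrt2}$ (I would double-check whether the paper intends $m$ itself or $\tfrac12 m^2$ as the lower bound, and state accordingly).

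The main obstacle is entirely contained in \autoref{lem:geometry_fact}, which is cited here and proved separately in \autoref{A1_b}; granting it, the proof of the proposition is a short and essentially mechanical argument. The only subtlety in the present proof is bookkeeping: ensuring that the index $\bar k$ produced by the geometric lemma indeed corresponds to an \emph{active} patch at $x$ (i.e.\ $|x-\bar k|<c$), which is exactly where the hypothesis $c>\sqrt5/2$ enters, since the lemma guarantees $|x-\bar k|\le\sqrt5/2<c$; and being careful that when $c>1$ the sum defining $A(x)$ has contributions beyond $\Box^2_1$, but these are harmless as they only add nonnegative terms. I would also remark that this reproves, with an explicit constant, the non-degeneracy already stated qualitatively in \autoref{lem:A_cases}\ref{it:A_regions_5}.
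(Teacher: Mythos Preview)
Your proposal is correct and follows essentially the same route as the paper: reduce to $N=1$ via \autoref{lem:A_N_rescaling}, expand $A(x)\xi\cdot\xi$ as a sum of squared projections using radiality of $\psi$, invoke \autoref{lem:geometry_fact} to select a $\bar k\in\Box^2_1$ with $|x-\bar k|\le\sqrt5/2<c$ and angular factor at least $1/\sqrt2$, and keep only that term. Your observation about the constant is also pertinent: the argument (both yours and the paper's) actually yields $M\ge\tfrac12 m^2$ rather than $M\ge m$, so the quantitative statement \eqref{eq:little_m_def} as written is imprecise.
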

\begin{proof}
   Recall that by definition,
    \begin{equation*}
       A(x)\xi \cdot \xi \coloneqq A^1(x) \xi \cdot \xi = \sum_{k\in \Box_1^2} |\nabla^\perp\psi(x-k) \cdot \xi|^2
    \end{equation*}
   and that, with our choice of $\theta^N_k$ and $r$, we may apply \autoref{lem:A_N_rescaling} to see that $A^N(x)= A(Nx)$. Therefore, it is enough to prove the statement for $N=1$. 
   
   Since $\nabla^\perp\psi(x-k) =\frac{(x- k)^\perp}{|x- k|} f' (|x-k|) $ we may appeal to \autoref{lem:geometry_fact} to find
   \begin{equation*}
   A(x)\xi\cdot \xi   = \sum_{k\in \mbZ^2} \left|\frac{(x- k)^\perp}{|x- k|}\cdot \xi \, f' (|x-k|) \right|^2 \ge \frac{1}{2} \inf_{k\in \mbZ^2}|f'(x-k)|^2. \end{equation*}
   
   Due to the assumed support of $f$, the minimal requirement for the last quantity to be bounded away from zero is that $\sup_{k\in\Box^2_2}|x-\bar k| < c$, which is guaranteed by $c>\nicefrac{\sqrt5}{2}$ and the last statement of Lemma \autoref{lem:geometry_fact}. In addition, this gives the stated lower bound of $M$ by $m \coloneqq \inf\big\{|f'(y)|, \, y \in B(0, \nicefrac{\sqrt{5}}{2})\big\}$.
\end{proof}
%
%

%
%
\subsection{Variational Characterisation of the Homogenised Operator}\label{sec: variational}
Since the matrix $H_\kappa = \kappa I + A$ is symmetric (c.f. \eqref{eq:A_N_x_y}), we can employ a variational approach to study the homogenized coefficient $C(c, \kappa)$.  Let $\xi \in \mathbb{S}^1$ be unit vector, so that by \autoref{prop: diagonality} the homogenized quadratic form  $\overline{H}_\kappa = C(c, \kappa )I$ (recall~\eqref{eq:hom_H}) is characterized, by the expression
\begin{equation}
    C(c,\kappa) =  \xi \cdot \int_{\T^2}H_\kappa(x)(\xi + \nabla \phi_\xi(x))\  \dd x 
    \label{eq:total_viscosity}
\end{equation}
where $\phi_\xi$ solves the corrector equation \eqref{eq:corrector} with $\xi$ in place of $e_i$. Using the fact that $\phi_\xi$ solves the cell problem \eqref{eq:corrector}, and integration by parts, we can write 
\begin{equation}
   C(c,\kappa) = \int_{\T^2}|H^{1/2}_\kappa(x)(\xi + \nabla \phi_\xi(x))|^2 \dd x
    \label{eq:total_viscosity_var}
\end{equation}
For any $\xi\in \mbS^1$ we introduce the convex functional,
\begin{equation*}
    \mcH^1(\T^2)\ni u \mapsto \mcE_\kappa(u)= \int_{\T^2}|H^{1/2}_\kappa(x)(\xi + \nabla u)|^2 \dd x.\end{equation*}
we deduce from the Euler-Lagrange equation associated to $\E_\kappa$,  that one has
\begin{equation*}\E_\kappa(\phi_\xi)= \min_{u\in \H^1(\T^2)}\mcE_\kappa (u) \quad\text{and}\quad  \phi_\xi = \argmin_{u\in \H^1(\T^2)} \E_\kappa(u).
\end{equation*}
That is, the unique minimizer of the energy $\E_\kappa$ coincides with the solution of the cell problem in a given direction $\xi$ and the energy associated to this minimizer, as a function of $\xi$, defines the quadratic form associated to $\overline H_\kappa$.
%
%

%
\subsection{Characterisations of Additional Diffusivity}\label{sec:additional_viscosity}
Recall from \autoref{lem:A_cases}  that ellipticity of the matrix $A$  depends on the value of $c$ in particular, for $c \in (0,\nicefrac{\sqrt{2}}{2})$ the support of $A$ is of measure strictly less than that of the torus. At the other extreme, for $c>\nicefrac{\sqrt{5}}{2}$ the matrix $A$ is strictly positive definite everywhere on $\mbT^2$. Furthermore, it was shown in \autoref{prop:pointwise_ellipticity} that in this latter regime, there is a quantifiable lower bound on the ellipticity of $A$. Therefore, due to \autoref{lem:homogenised_upper_lower_symmetric} we  have two direct conclusions; for $c \in (0,\nicefrac{\sqrt{2}}{2})$ the matrix $\bar{H}_\kappa - \kappa I$ is non-negative definite for all $\kappa > 0$, while for $c > \nicefrac{\sqrt{5}}{2}$ the matrix $\bar{H}_\kappa - \kappa I$ is positive definite with a quantifiable lower bound. Beyond these direct conclusions, however, we are interested in obtaining finer properties of the quantity 
\begin{equation}\label{eq:additional_viscosity}
  \nu_\kappa\coloneqq C(c, \kappa) - \kappa =\left(\bar{H}_\kappa \xi \cdot \xi - \kappa \right), \qquad \text{for any } \, \xi \in \mathbb{S}^1 
\end{equation}
in the limit $\kappa \to 0$. Recall that this quantity does not depend on the choice of $\xi \in \mathbb{S}^1$. We refer to the quantity $\nu_\kappa$ as the \emph{additional diffusivity} in the homogenized limit. This section will give a proof of \autoref{thm: effective_bounds_intro}, whose statement we now recall.
\begin{theorem}\label{thm:effective_bounds_body} Under the same assumptions of the previous theorem the following statements hold:
\begin{enumerate}[label=\roman*)]
    \item \label{it:bounds_1_body} If $c\in (0,\nicefrac{1}{2})$ (\textit{i.e.} the vorticity patches are completely separated) there exists a constant $L>0$, depending only on $c$ such that 
    \begin{equation}
      \kappa \leq  C(c, \kappa) \leq L\kappa \quad \text{ for all }\kappa >0.
    \end{equation}
    In particular, the constant $L$ does not depend on the specific choice of the radial function $\psi$. 
    \item \label{it:bounds_2_body} If If $c= \nicefrac{1}{2}$ then for  any $n\geq \nicefrac{7}{2}$ there exists a $p\coloneqq p(n)\geq 1$ and a constant $L'\coloneqq L'(c,\|\psi\|_{W^{p+1,\infty}_x})>0$ such that
\begin{equation}
      \kappa \leq  C(c, \kappa) \leq L'(\kappa + \kappa^{1-\frac{1}{n}}) \quad \text{for all } \kappa >0.
    \end{equation}
    \item \label{it:bounds_3_body} If $c \ge \sqrt{2}/2$, then there exists a constant $m>0$ such that
\begin{equation}
    C(c, \kappa) \geq \kappa + m \quad \text{ for all $\kappa >0$.}
\end{equation}
    Moreover, in this case, the constant $m$ can be made arbitrarily large by a suitable choice of the stream function $\psi$ (see Remark~\ref{rem:bounds_scaling} below). 
\end{enumerate}
\end{theorem}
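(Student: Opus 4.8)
\medskip\noindent\textit{Proof strategy.} All three bounds are read off the variational formula \eqref{eq:total_viscosity_var}: for any fixed unit $\xi$, $C(c,\kappa)=\min_{u\in\H^1(\T^2)}\int_{\T^2}(\xi+\grad u)^\top(\kappa I+A)(\xi+\grad u)\,\dd x$, and since $\int_{\T^2}|\xi+\grad u|^2\dd x=1+\|\grad u\|_{L^2_x}^2$ for mean-free $u$, this equals $\kappa+\inf_u\big(\kappa\|\grad u\|_{L^2_x}^2+\int_{\T^2}(\xi+\grad u)^\top A(\xi+\grad u)\,\dd x\big)$; the lower bound $C(c,\kappa)\ge\kappa$ is \autoref{lem:homogenised_upper_lower_symmetric}. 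For part~\ref{it:bounds_1_body} I use that when $c<\nicefrac12$ the set $\supp(A)=\overline{B(0,c)}$ lies inside a fundamental domain of $\T^2$; picking $c<c'<\nicefrac12$ and $\chi\in C^\infty_0(B(0,c'))$ with $\chi\equiv1$ on $B(0,c)$, the periodic function $u:=\chi\,(-\xi\cdot x)$ satisfies $\xi+\grad u\equiv0$ on $\supp(A)$, so the $A$-integral vanishes and $\mcE_\kappa(u)=\kappa(1+\|\grad u\|_{L^2_x}^2)$; as $u$, hence $\|\grad u\|_{L^2_x}$, does not involve $\psi$, a supremum over $\xi\in\mbS^1$ gives $C(c,\kappa)\le L(c)\kappa$ with $L(c)$ independent of $\psi$.

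For part~\ref{it:bounds_2_body} this construction breaks down: $\supp(A)=\overline{B(0,\nicefrac12)}$ is a maximal embedded ball, so no periodic $u$ can have $\grad u\equiv-\xi$ on it. The plan is instead to damp the ideal corrector in a thin radial annulus $\{\nicefrac12-\rho<|x|<\nicefrac12\}$, where $A$ is uniformly small: with $\eta$ a radial cutoff equal to $1$ on $\{|x|<\nicefrac12-\rho\}$ and $0$ near $\{|x|=\nicefrac12\}$, $|\eta'|\lesssim\rho^{-1}$, set $u_\rho:=\eta(|x|)(-\xi\cdot x)$ inside $B(0,\nicefrac12)$ and $0$ outside. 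Inside $\supp(A)$ only the patch $k=0$ is active and $A$ equals $f'(|x|)^2$ times the projection onto $x^\perp$, so the large \emph{radial} part of $\grad u_\rho$ (the $\eta'$ term) lies in $\ker A$ there, and the $A$-contribution to $\mcE_\kappa(u_\rho)$ is bounded by $\int_{\nicefrac12-\rho<|x|<\nicefrac12}f'(|x|)^2\dd x$; since $f\in C^\infty$ vanishes identically past $\nicefrac12$, Taylor's theorem gives $|f'(r)|\lesssim_p\|\psi\|_{W^{p+1,\infty}_x}(\nicefrac12-r)^p$ for every $p$, hence this is $\lesssim\rho^{2p+1}$, while $\kappa\int_{\T^2}|\xi+\grad u_\rho|^2\dd x\lesssim\kappa\rho^{-1}$ (annulus of area $\lesssim\rho$, gradient $\lesssim\rho^{-1}$). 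Optimising $\kappa\rho^{-1}\sim\rho^{2p+1}$ forces $\rho\sim\kappa^{1/(2p+2)}$ and $\mcE_\kappa(u_\rho)\lesssim\kappa^{1-1/(2p+2)}$; choosing $p=p(n)$ large enough relative to $n$ (the precise relation, the constant, and the restriction $n\ge\nicefrac72$ coming from a careful accounting of the geometry near the pinch points $(\pm\nicefrac12,0),(0,\pm\nicefrac12)$) gives $C(c,\kappa)\le L'(\kappa+\kappa^{1-1/n})$ with $L'$ depending on $\|\psi\|_{W^{p+1,\infty}_x}$.

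For part~\ref{it:bounds_3_body} the reduction is immediate: dropping the nonnegative term $\kappa\|\grad u\|_{L^2_x}^2$ gives $C(c,\kappa)\ge\kappa+Q_*$, where $Q_*:=\inf_{u\in\H^1(\T^2)}\int_{\T^2}(\xi+\grad u)^\top A(\xi+\grad u)\,\dd x\ge0$, so everything reduces to proving $Q_*>0$ (then $m:=Q_*$, and since $\psi\mapsto\lambda\psi$ sends $A\mapsto\lambda^2A$ and $Q_*\mapsto\lambda^2Q_*$, $m$ is unbounded in $\psi$, giving \autoref{rem:bounds_scaling}). When $c>\nicefrac{\sqrt5}{2}$ this is free: \autoref{prop:pointwise_ellipticity} gives $A(x)\ge MI$ uniformly, hence $Q_*\ge M\min_u\int_{\T^2}|\xi+\grad u|^2\dd x=M>0$ (equivalently, apply \autoref{lem:homogenised_upper_lower_symmetric} to $H_\kappa\ge(\kappa+M)I$). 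The substantial regime is $\nicefrac{\sqrt2}{2}\le c\le\nicefrac{\sqrt5}{2}$, where by \autoref{lem:A_cases} and \autoref{rem:A_kernel} $A$ has full rank on a set of full measure, degenerating only on the edges of the square (and, for $c<1$, on the near-corner single-patch cells), with kernel there the direction radial to the active patch circle. I argue by contradiction: take a minimising sequence $u_n$ (mean free), put $v_n:=\xi+\grad u_n$, which is curl free with $\int_{\T^2}v_n=\xi$ and $\int_{\T^2}v_n^\top Av_n\to0$; ellipticity of $A$ on compacts of the full-rank region gives $v_n\to0$ in $L^2_{\mathrm{loc}}$ there, and on the degenerate locus $A$ still pins down the component of $v_n$ transverse to its radial kernel. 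Upgrading this to $v_n\to0$ in $L^2$ of a genuine neighbourhood of the edge midpoints (away from the corners $A$'s surviving eigenvalue is bounded below, and the full-rank region lies on both sides of each edge there) one concludes: for a.e.\ horizontal slice $\{x_2=t\}$ with $t$ near $\nicefrac12$ — which meets the degenerate locus only near the well-controlled vertical edge midpoint, missing corners and single-patch cells — $v_{n,1}(\cdot,t)\to0$ in $L^2(0,1)$, whereas curl-freeness forces $\int_0^1 v_{n,1}(x_1,t)\,\dd x_1=\xi_1$ for all $n$ and $t$; hence $\xi_1=0$, and symmetrically $\xi_2=0$, contradicting $|\xi|=1$. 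Therefore $Q_*>0$.

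The main obstacle is exactly this last argument in the regime $\nicefrac{\sqrt2}{2}\le c\le\nicefrac{\sqrt5}{2}$: one must rule out that the minimising sequence $v_n$ concentrates in $L^2$ along the kernel direction in which $A$ degenerates near the edges and corners, so that the convergence $v_n\to0$ genuinely propagates through the degenerate layer. In dual form this amounts to exhibiting a single divergence-free field $j$ on $\T^2$ with $\int_{\T^2}j=\xi$ and $\int_{\T^2}j^\top A^{-1}j\,\dd x<\infty$, which forces $j$ to vanish to the right order against the precise vanishing rates of $A$ (of order $\mathrm{dist}^2$ to each edge, and higher at the corners) — a construction made possible precisely by the explicit description of $\ker A$ and of the degeneracy set in \autoref{lem:A_cases} and \autoref{rem:A_kernel}. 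Everything else — the two cutoff constructions and the exponent bookkeeping in part~\ref{it:bounds_2_body}, the uniform-ellipticity case of part~\ref{it:bounds_3_body}, and the scaling $\psi\mapsto\lambda\psi$ — is routine.
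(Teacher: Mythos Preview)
Your proposal is correct and, for parts~\ref{it:bounds_1_body} and~\ref{it:bounds_3_body}, follows the paper's strategy closely. In~\ref{it:bounds_1_body} the paper uses a piecewise-linear function of $x_1$ alone (equal to $-x_1$ on $[0,c]\cup[1-c,1]$) rather than your radial cutoff, but both kill the $A$-term identically and yield a $\psi$-independent constant. In~\ref{it:bounds_3_body} the paper phrases the contradiction dually---it shows any minimising sequence $F^n$ satisfies $\int F^n\cdot\varphi\to 0$ for test functions $\varphi=\sum_k a_k\nabla^\perp\psi(\,\cdot\,-k)$, and then writes the horizontal indicator $\hat\varphi=(\varphi_1(x_2),0)$ in this form---but the substance is exactly your primal argument: the key technical point in both is that near the vertical-edge midpoints $e_1$ lies in $\mathrm{range}\,A(x)=\mathrm{span}\{\nabla^\perp\psi(x-k)\}$ with uniformly bounded coefficients, so that $v_{n,1}^2\lesssim v_n^\top A v_n$ there.

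Part~\ref{it:bounds_2_body} is where you genuinely diverge. The paper keeps the one-dimensional ansatz $\phi^\delta(x_1)$, so the $A$-contribution lives on the lens $B(0,\nicefrac12)\cap\{|x_1-\nicefrac12|<\delta\}$ of area $\sim\delta^{3/2}$, and $\nabla^\perp\psi$ is Taylor-expanded from the tangency point $(\nicefrac12,0)$ over distance $\sim\sqrt\delta$; balancing with $\kappa/\delta$ produces the exponent $1-\tfrac{2}{p+6}$, whence $n=(p+6)/2$ and the threshold $n\ge\nicefrac72$. Your radial competitor is cleaner because the large $\eta'$-term is radial and lies in $\ker A=(x^\perp)^\perp$, so the $A$-term reduces to $\int_{\nicefrac12-\rho}^{\nicefrac12} f'(r)^2\,r\,\dd r\lesssim\rho^{2p+1}$; balancing with $\kappa/\rho$ gives $n=2p+2$. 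Note, though, that your threshold is then $n\ge 4$ (for $p\ge1$), not $n\ge\nicefrac72$: the $\nicefrac72$ you invoke is an artefact of the paper's lens geometry rather than of any ``pinch-point'' analysis in your construction, so that remark in your sketch is slightly off. Asymptotically the two routes are equivalent.
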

\begin{remark}\label{rem:bounds_scaling}
    In the setting of Theorem~\ref{thm:effective_bounds_body} \ref{it:bounds_3_body} we observe the following, not necessarily sharp, quantifications of the relationship between $m$ and $\psi$.
\begin{enumerate}[label=\alph*)]
        \item \label{it:bounds_scaling_1} If $\nicefrac{\sqrt{2}}{2} < c$ then $m$ is $1$-homogeneous in $\psi$ (i.e. $m^\lambda$ associated to $\lambda \psi$ satisfies $m^\lambda = \lambda m^1$.)
        \item \label{it:bounds_scaling_2}  If $\nicefrac{\sqrt{5}}{2} <c $ then we have $m\geq  \inf \{|f'(y)|, y \in B(0,\nicefrac{\sqrt{5}}{2})\}$.
    \end{enumerate}
One can check \ref{it:bounds_scaling_1} by the argument discussed in \autoref{rem:c_greater_root_2_over_2_scaling} while \ref{it:bounds_scaling_2} follows from a combination of \autoref{prop:pointwise_ellipticity} and \autoref{lem:homogenised_upper_lower_symmetric}. 
\end{remark}

\subsubsection{The regime $c \in (0, \nicefrac{1}{2}]$}
%
%
In this regime, the matrix $A$ is not uniformly elliptic, i.e. there exist a set of positive measure in $ \mbT^2$ where $A \equiv 0$, therefore the trivial lower bound from \autoref{lem:homogenised_upper_lower_symmetric} gives that for all $\xi\in \mbR^2$ and $\kappa >0$, 
\begin{equation}
    C(c, \kappa)  \geq \kappa  \quad \implies  \quad  \nu_\kappa \geq 0.
\end{equation}
In particular, $\lim_{\kappa \to 0} \nu_\kappa(\xi) \geq 0$. We are interested in constructing a commensurate upper bound on $\nu_\kappa(\xi)$. We will achieve this by constructing approximate solutions to the cell problem for the unit vector $e_1 =(1,0)$.

\begin{lemma}
Let $c\in (0,\nicefrac{1}{2}]$. Then,
\begin{itemize}
    \item for $c\in (0,\nicefrac{1}{2})$ there exists a constant $L>0$ depending on $c>0$ alone such that
    \begin{equation}
       \nu_\kappa  \leq L \kappa.
    \end{equation}
    \item for $c= \nicefrac{1}{2}$ and any $n\geq \nicefrac{7}{2}$ there exists a $p\coloneqq p(n)\geq 1$ and a constant $L'\coloneqq L'(c,\|\psi\|_{W^{p+1,\infty}_x})>0$ such that 
    \begin{equation}
         \nu_\kappa  \leq L' \left(\kappa+\kappa^{1-\frac{1}{n}}\right).
    \end{equation}
\end{itemize}
\end{lemma}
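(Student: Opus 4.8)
The lower bound $\nu_\kappa\ge 0$ is already recorded above, so the whole task is the two upper bounds, and for these the plan is to use the variational characterisation \eqref{eq:total_viscosity_var}, which says $C(c,\kappa)=\min_{u\in\mcH^1(\T^2)}\mcE_\kappa(u)$. Expanding $\mcE_\kappa(u)=\int_{\T^2}(\xi+\nabla u)^\top(\kappa I+A)(\xi+\nabla u)$ and using $\int_{\T^2}\nabla u=0$, $|\xi|=1$, one gets for \emph{any} competitor $u$ the bound
\[
\nu_\kappa\;\le\;\kappa\int_{\T^2}|\nabla u|^2\,\dd x\;+\;\int_{\T^2}(\xi+\nabla u)^\top A(x)(\xi+\nabla u)\,\dd x .
\]
The guiding idea is to make $\xi+\nabla u$ vanish on (most of) $\supp A$, so that the second term is killed; since $\nabla(-\xi\cdot x)=-\xi$, this means $u$ should be a suitably localised copy of the affine function $-\xi\cdot x$, centred at the single vortex sitting at the corner of $\T^2$, and the only real question is how much Dirichlet energy this localisation costs.

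\textbf{The regime $c\in(0,\nicefrac12)$.} By \autoref{lem:A_cases}, $\supp A\subset\overline{B(0,c)}\subset\T^2$, and since $c<\nicefrac12$ this ball sits strictly inside the torus with a margin. I would fix $c'\in(c,\nicefrac12)$ and $\chi\in C^\infty(\T^2)$ with $\chi\equiv1$ on $\overline{B(0,\tfrac{c+c'}{2})}$ and $\supp\chi\subset B(0,c')$ (well defined and periodic because $B(0,c')$ does not wrap around $\T^2$), and take $u(x)=-\chi(x)(\xi\cdot x)$. On $\overline{B(0,c)}$ one has $\chi\equiv1$, $\nabla\chi=0$, hence $\xi+\nabla u=0$ there and the $A$-term is identically zero, while $\int_{\T^2}|\nabla u|^2\le \pi(c')^2(1+c'\|\nabla\chi\|_{L^\infty})^2=:L$, a constant depending only on $c$ (via the choice of $\chi$) and in particular not on $\psi$. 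This gives $\nu_\kappa\le L\kappa$.

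\textbf{The borderline $c=\nicefrac12$.} Here $\supp A\subset\overline{B(0,\nicefrac12)}$ is the disk inscribed in $\T^2$, and it touches — and is pinched at — the edge-midpoints of $\T^2$; as a consequence $-\xi\cdot x$, though well defined on the open ball, admits no finite-energy periodic extension, so one must retreat to a slightly smaller ball and pay a price on the rim. The plan is: fix $\delta\in(0,\nicefrac14)$, choose $\eta_\delta\in C^\infty(\R;[0,1])$ with $\eta_\delta\equiv1$ on $[0,\nicefrac12-\delta]$, $\eta_\delta\equiv0$ on $[\nicefrac12-\nicefrac\delta2,\infty)$, $|\eta_\delta'|\lesssim\delta^{-1}$, and set $u(x)=-\eta_\delta(|x|)(\xi\cdot x)$, which is supported in $B(0,\nicefrac12-\nicefrac\delta2)$ — still inside the torus and away from the pinch points, hence periodic. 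On $B(0,\nicefrac12-\delta)$ one has $\xi+\nabla u=0$, so the $A$-term only sees the annulus $\{\nicefrac12-\delta<|x|<\nicefrac12\}$, where $|\xi+\nabla u|\lesssim\delta^{-1}$; crucially, since $\psi=f(|\cdot|)$ with $\supp\psi\subset\overline{B(0,\nicefrac12)}$ and $\psi\in W^{p+1,\infty}$ one has $f^{(j)}(\nicefrac12)=0$ for $j\le p$, so Taylor's theorem gives $|f'(t)|\lesssim\|\psi\|_{W^{p+1,\infty}}(\nicefrac12-t)^{p}$, hence $|A(x)|=|f'(|x|)|^2\lesssim\|\psi\|_{W^{p+1,\infty}}^2\,\delta^{2p}$ on that annulus, of area $\lesssim\delta$. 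Together with $\int_{\T^2}|\nabla u|^2\lesssim\delta^{-1}$ this produces
\[
\nu_\kappa\;\lesssim\;\frac{\kappa}{\delta}\;+\;\|\psi\|_{W^{p+1,\infty}}^2\,\delta^{2p-1},
\]
and optimising over $\delta\sim\kappa^{1/(2p)}$ gives $\nu_\kappa\lesssim_\psi\kappa^{1-1/(2p)}$ for $\kappa\le1$ (for $\kappa\ge1$ one uses the trivial bound $\nu_\kappa\le\mcE_\kappa(0)-\kappa=\int_{\T^2}\xi^\top A\xi\le\|A\|_{L^\infty}$). Given $n\ge\nicefrac72$, choosing the integer $p=p(n)\ge1$ with $2p\ge n$ makes $\kappa^{1-1/(2p)}\le\kappa^{1-1/n}$ on $(0,1]$, which is the asserted bound with $L'=L'(c,\|\psi\|_{W^{p+1,\infty}})$.

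The main obstacle, and the reason the two cases genuinely differ, is the geometry of $\supp A$ inside $\T^2$: for $c<\nicefrac12$ it is a disk with a buffer, so the obstruction term can be annihilated exactly with $O(1)$ energy, whereas at $c=\nicefrac12$ the support fills the inscribed disk and becomes pinched, forcing a boundary layer of width $\delta$ on which one must balance the $\delta^{-1}$ blow-up of $\nabla u$ against the $\delta^{2p}$ smallness of $A$ near $\partial\supp A$ — the only place the regularity of $\psi$ is spent. I expect the remaining technical work to be exactly this balancing and the careful bookkeeping of the cut-off near the pinch points, plus pinning down the precise relation between $n$ and $p$; everything else is elementary.
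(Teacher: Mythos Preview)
Your proposal is correct, and the underlying strategy — use the variational characterisation of $C(c,\kappa)$ and plug in a competitor that makes $\xi+\nabla u$ vanish on (most of) $\supp A$ — is exactly the paper's. The difference is in the \emph{shape} of the competitor: the paper uses a one–dimensional, piecewise–linear function of $x_1$ alone (so the ``bad'' region for $c=\nicefrac12$ is the strip $\{|x_1-\nicefrac12|<\delta\}$ intersected with $B(0,\nicefrac12)$, a cusp of area $\sim\delta^{3/2}$, and the Taylor expansion is taken around the tangent point $(\nicefrac12,0)$, giving $|\nabla^\perp\psi|\lesssim\delta^{(p+1)/2}$), whereas you use a radial cut–off adapted to the geometry of $\supp A=\overline{B(0,c)}$, so the bad region is an annulus of width $\delta$ and the Taylor expansion is radial, giving directly $|f'|\lesssim\delta^p$.

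Your radial construction is arguably more natural here and yields a slightly better trade–off: you obtain $\nu_\kappa\lesssim\kappa^{1-1/(2p)}$, hence need $p(n)\sim n/2$, while the paper's 1D competitor leads to $\nu_\kappa\lesssim\kappa^{1-2/(p+6)}$ and $p(n)=2n-6$. For large $n$ you therefore require less regularity of $\psi$; the only minor point to tidy is that for the smallest admissible $n=\nicefrac72$ your $p(n)$ must be at least $2$ (not $1$), which is still consistent with the statement $p(n)\ge1$.
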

\begin{proof}
By the definition of $H_\kappa$ and $C(c,\kappa)$ (recall \eqref{eq:total_viscosity_var}) and \autoref{prop: diagonality}, for any $\xi \in \mathbb{S}^1$, we have
\begin{equation*}
\begin{aligned}
    \nu_\kappa &= \left(\bar{H}_\kappa \xi \cdot \xi - \kappa \right) \\
    &= \inf_{\phi \in H^1_0(\mbT^2)} \left\{ \int_{\mbT^2} \left[\kappa |\nabla \phi(x)|^2 + (\nabla \phi(x))^\top A(x)\nabla \phi  + 2(\nabla \phi(x))^\top A(x) \xi + \xi A(x) \xi \right] \dd x;\,  \right\},
    \end{aligned}
\end{equation*}
 So that for any $\phi \in H^1_0(\mbT^2)$
 \begin{equation}\label{eq:turb_visc_upper_1}
 	\nu_\kappa \leq \int_{\mbT^2} \left[\kappa |\nabla \phi(x)|^2 + (\nabla \phi(x))^\top A(x)\nabla \phi(x)  + 2(\nabla \phi(x))^\top A(x) \xi + \xi A(x) \xi  \right] \dd x.
 \end{equation}
 Furthermore, observing that 
 \begin{equation}
 	(\nabla \phi(x))^\top A(x)\nabla \phi(x)  + 2(\nabla \phi(x))^\top A(x) \xi + \xi A(x) \xi  = \sum_{k\in\,\Box^2_1} \left| (\nabla \phi(x)+\xi) \cdot \nabla^{\perp}\psi(x-k) \right|^2,
 \end{equation}
 we write \eqref{eq:turb_visc_upper_1} in the more compact form, for any $\xi\in \mbS^1$,
 \begin{equation}\label{eq:turb_visc_upper_2}
 	\nu_\kappa \leq \int_{\mbT^2} \left[\kappa |\nabla \phi(x)|^2 +\sum_{k\in\,\Box^2_1} \left| (\nabla \phi(x)+\xi) \cdot \nabla^{\perp}\psi_k(x-k) \right|^2  \right] \dd x
 \end{equation}
 Therefore, setting $\xi= e_1 =(1,0)$ and
 \begin{equation}
 	\phi(x) = \begin{cases}
 		 -x_1, & x_1 \in [0,c],\\
 		\frac{2c }{1-2c}x_1 - \frac{c}{1-2c}, &x_1 \in [c,1-c],\\
 		1-x_1, & x_1 \in [1-c,1].
 	\end{cases}
 \end{equation}
 for which,
 \begin{equation}
 	\nabla \phi(x) = \begin{cases}
 		-e_1, & x_1 \in (0,c)\cup (1-c,1)\\
 		\frac{2c}{1-2c}e_1, &x_1 \in (c,1-c)
 	\end{cases}
 \end{equation}
 we find the estimate
 \begin{align}
 	\nu_\kappa \, \leq  \,& \kappa \left( 2 \int_{0}^c \int_0^1 \dd x_1 \dd x_2 +  \int_{c}^{1-c}\int_0^1 \left|\frac{2c}{1-2c}\right|^2 \dd x_1 \dd x_2\right)\\
 	&+ \sum_{k\in \, \Box^2_1} \int_{B(k,c)}  \left| (-e_1+e_1) \cdot \nabla^{\perp}\psi(x-k) \right|^2 \dd x\\
 	= \,& 2\kappa \left(c +  \frac{4c^2}{(1-2c)^2}\right)\label{eq:c_leq_half_linear}
 \end{align}
 which concludes the proof for $c\in (0,\nicefrac{1}{2})$.

Since the estimate \eqref{eq:c_leq_half_linear} degenerates as $c\to \nicefrac{1}{2}$ we are required to adapt our argument in this case. To this end we define a family of approximate competitors. For $\delta \in (0,\nicefrac{1}{2})$ we set
 \begin{equation}
 	\phi^\delta(x)  = \begin{cases}
 		-x_1, & x_1 \in [0,\nicefrac{1}{2}-\delta],\\
 		\frac{1-2\delta}{2\delta}x_1 + \frac{2\delta - 1}{4\delta}, &x_1 \in [\nicefrac{1}{2}-\delta,\nicefrac{1}{2}+\delta],\\
 		1-x_1, & x_1 \in [\nicefrac{1}{2}+\delta,1].
 	\end{cases}
 	\end{equation}

 Therefore, by symmetry of the sum and radial symmetry $\psi$, for every $\delta \in (0,\nicefrac{1}{2})$, we have 
 \begin{align*}
 	\nu_\kappa \leq &\, \kappa \left( 2  \int_0^1  \int_{0}^{\nicefrac{1}{2}-\delta} \dd x_1 \dd x_2 +  \int_0^1  \int_{\nicefrac{1}{2}-\delta}^{\nicefrac{1}{2}+\delta} \left|\frac{1-4\delta}{8\delta}\right|^2 \dd x_1 \dd x_2\right)\\
 	&+ \sum_{k\in \, \Box^2_1} \int_{B(k,\nicefrac{1}{2}) \cap ([1/2-\delta,1/2+\delta]\times [0,1])}  \left| \left(\frac{1-4\delta}{8\delta}+1\right)e_1 \cdot \nabla^{\perp}\psi(x-k) \right|^2 \dd x \\
 	= \,& 2\kappa \left(\frac{1}{2}-\delta +  \frac{(1-4\delta)^2}{8\delta}\right)\\
 	& + 4 \int_{B(0,\nicefrac{1}{2})\cap ([\nicefrac{1}{2}-\delta,\nicefrac{1}{2}]\times[0,1])} \left| \left(\frac{1-4\delta}{8\delta}+1\right)e_1 \cdot \nabla^{\perp}\psi(x) \right|^2 \dd x.
 \end{align*}
Since we assume  $D^{a}\nabla^\perp \psi|_{\partial_{B(0,\nicefrac{1}{2})}} = 0$ for all $a\in \mbN^2$ and $\nabla^\perp \psi \in C^\infty(\mbT^2)$ we can Taylor expand to arbitrary order around any point $x\in \partial B(0,\nicefrac{1}{2})$. With the same $\delta \in (0,\nicefrac{1}{2})$ as above, let us define 
 \begin{equation*}
     B(0,\nicefrac{1}{2}) \cap \{x_1 = \nicefrac{1}{2}-\delta \} \eqqcolon  x_\delta =  \Big(\nicefrac{1}{2}-\delta, \sqrt{\delta(1-\delta)}\, \Big),
 \end{equation*}
for which 
\begin{equation*}
    |x_\delta - (\nicefrac{1}{2},0)| = \sqrt{\delta},
\end{equation*}
Hence, for any $x\in B(0,\nicefrac{1}{2})\cap ([\nicefrac{1}{2}-\delta,\nicefrac{1}{2}]\times [0,1])$ and $p\geq 0$, we obtain 
\begin{align}
 	|\nabla^\perp \psi(x) |=&\, \left| \nabla^\perp \psi(x) -\sum_{|a|\leq p} \frac{1}{a!}D^a \nabla^\perp \psi((\nicefrac{1}{2},0))(x-(\nicefrac{1}{2},0))^a \right|\\
    \leq &\,  \sup_{x \in B(0,\nicefrac{1}{2})\cap [\nicefrac{1}{2}-\delta,\nicefrac{1}{2}]\times [0,1]}|x - (\nicefrac{1}{2},0)|^{p+1}\max_{|a|=p+1}\|D^a \nabla^\perp \psi_1\|_{L^\infty(\mbT^2)}\\
    \leq &\, |x_\delta - (\nicefrac{1}{2},0)|^{p+1}\max_{|a|=p+1}\|D^a \nabla^\perp \psi\|_{L^\infty(\mbT^2)}\\
    =&\,\delta^{\frac{p+1}{2}} \max_{|a|=p+1}\|D^a \nabla^\perp \psi\|_{L^\infty(\mbT^2)}\\
    \lesssim &\, \delta^{\frac{p+1}{2}}\|\nabla^\perp \psi \|_{W^{p+1,\infty}(\mbT^2)} ,
 \end{align}
 Furthermore, one directly sees 
 \begin{equation}
 	\big|B(0,\nicefrac{1}{2})\cap [\nicefrac{1}{2}-\delta,\nicefrac{1}{2}]\times[0,1]\big| \leq  \delta \sqrt{\delta(1-\delta)} \lesssim \delta^{\nicefrac{3}{2}}.
 \end{equation}
 Putting all of this together, we find that for any $p\geq 1$,
 \begin{align*}
 	\int_{B(k_1,\nicefrac{1}{2})\cap [\delta,\nicefrac{1}{2}]\times[0,1]} \bigg| \left(\frac{1-4\delta}{8\delta}+1\right)e_1 \cdot &\nabla^{\perp}\psi(x)\,  \bigg|^2 \, \dd x\\
    &\, \leq \, \delta^{\frac{3}{2}}\left|\frac{1-4\delta}{8\delta}+1\right|^2 \sup_{x \in B(0,\nicefrac{1}{2})\cap ([\nicefrac{1}{2}-\delta,\nicefrac{1}{2}]\times [0,1])} |\nabla^\perp \psi(x)|^2\\
 	&\, \leq \,  \delta^{\frac{p+4}{2}}\left|\frac{1-4\delta}{8\delta}+1\right|^2 \|\nabla^\perp \psi\|^2_{W^{p+1,\infty}(\mbT^2)}.
 \end{align*}
 Hence, we find
 \begin{align*}
 	\nu_\kappa \, &\leq \,2\kappa \left(\frac{1}{2}-\delta +  \frac{(1-4\delta)^2}{8\delta}\right) + 2 \delta^{\frac{p+4}{2}}\left|\frac{1-4\delta}{8\delta}+1\right|^2 \|\nabla^\perp \psi\|^2_{W^{p+1,\infty}(\mbT^2)}\\
 	%
 	%
 	&\lesssim \left(1+\|\psi\|_{W^{p+1,\infty}}\right) \left(\kappa + \frac{\kappa}{\delta} + \delta^{\frac{p+4}{2}}\right).
 \end{align*}
 Now, let us set $\delta = \kappa^{\alpha}$ for some $\alpha>0$, to write
 \begin{equation*}
 		\nu_\kappa \lesssim_{\|\psi\|_{W^{p+1,\infty}}} \kappa + \kappa^{1-\alpha} +\kappa^{\frac{\alpha(p+4)}{2}}.
 \end{equation*}
 Since we are free to choose $\alpha>0$ we obtain the best result when
 \begin{equation*}
 	1-\alpha = \frac{\alpha(p+4)}{2} \quad \iff \quad \alpha = \frac{2}{p+6}.
 \end{equation*}
 Plugging this in we see that
 \begin{align*}
 		\nu_\kappa\lesssim_{\|\psi\|_{W^{p+1,\infty}}} \kappa + 2\kappa^{1-\frac{2}{p+6}}.
 \end{align*}
Hence, for any $n\geq \nicefrac{7}{2}$ there exists a $p\coloneqq p(n) \geq 1$ (in fact $p(n) = 2n- 6$) such that
\begin{align*}
 \nu_\kappa \lesssim_{\|\psi\|_{W^{p(n)+1,\infty}}}  \kappa + \kappa^{1-\frac{1}{n}}.
 \end{align*}
%
 %
 which concludes the proof.
\end{proof}
\begin{remark}
    Note that in the case $c=\nicefrac{1}{2}$ we crucially used the assumption that $\psi$ was smooth to take Taylor expansions of arbitrarily high order. However, since $\psi$ is compactly supported, it cannot be analytic and so we cannot take the limits $p(n),\, n \to +\infty$. This fact is reflected in our numerical experiments (see \autoref{fig:add_visc2}) where we observe that the exponent drops from being consistent with linear dependence on $\kappa$ for $c<\nicefrac{1}{2}$ to being strictly less than one for $c= \nicefrac{1}{2}$.
    %
\end{remark}
\subsubsection{The regime $\nicefrac{\sqrt{2}}{2}<c <\nicefrac{\sqrt{5}}{2}$:}
In this subsection we analyse the constant $C(c, \kappa)$ in the regime $\nicefrac{\sqrt{2}}{2}<c <\nicefrac{\sqrt{5}}{2}$. We prove that in this case $C(c, \kappa) \ge m >0$ for a constant $m$ independent of $\kappa$, thus this regime yields a strictly positive additional diffusivity $\nu_\kappa$, uniformly in $\kappa >0$.
\begin{definition}
    Let $\xi \in \mathbb{S}^1$ be given and define the affine space 
    $$\mathcal{V}_\xi=\left\{ F \in C^\infty(\T^2, \R^2): \int_{\T^2} F = \xi, \ \nabla^\perp \cdot F=0\right\}.$$
    We write $V_\xi$ for the closure of $\mathcal{V}_\xi$ in $L^2(\T^2, \R^2)$.
\end{definition}
 Let $c>1$, introduce on $V_\xi$ the quadratic functional
    \begin{equation*}
    \|F\|_A^2:=\int_{\T^2}F^\top(x)A(x)F(x)\dd x = \sum_{k\in\, \Box^2_1} \int_{\T^2}|F\cdot \nabla^\perp\psi(x-k)|^2 \dd x
    \end{equation*}
    \begin{lemma}\label{lem: inf equality}
        The infimum of $\|\cdot \|_A$ on $V_\xi$ coincides with the infimum on $\mathcal{V}_\xi$. 
    \end{lemma}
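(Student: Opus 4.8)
The plan is to obtain the equality of the two infima from two ingredients: (i) $\mathcal{V}_\xi$ is dense in $V_\xi$, which holds by construction since $V_\xi$ was \emph{defined} as the $L^2$-closure of $\mathcal{V}_\xi$; and (ii) the functional $F \mapsto \|F\|_A^2 = \int_{\T^2} F^\top(x) A(x) F(x)\,\dd x$ is continuous on $L^2(\T^2;\R^2)$. Granting (i) and (ii), the elementary fact that a continuous real-valued function on a metric space has the same infimum over a dense subset as over the whole space gives $\inf_{\mathcal{V}_\xi}\|\cdot\|_A^2 = \inf_{V_\xi}\|\cdot\|_A^2$; composing with the continuous increasing map $t\mapsto\sqrt t$ then yields the statement for $\|\cdot\|_A$ itself.

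For (ii) I would first record that $A \in L^\infty(\T^2;\R^{2\times 2})$: indeed $\nabla^\perp\psi$ is smooth with compact support, hence bounded, and at each $x\in\T^2$ only a finite number of summands in the definition of $A$ are nonzero, that number being bounded uniformly in $x$ (the same finiteness argument already used in \autoref{lem:A_N_op_est}). Writing $\Lambda := \|A\|_{L^\infty}$, a one-line bilinear estimate gives, for all $F,G\in L^2(\T^2;\R^2)$,
\[
\bigl|\,\|F\|_A^2 - \|G\|_A^2\,\bigr| = \left| \int_{\T^2} (F-G)^\top A F + G^\top A (F-G) \right| \le \Lambda\,\|F-G\|_{L^2}\bigl(\|F\|_{L^2} + \|G\|_{L^2}\bigr),
\]
so $\|\cdot\|_A^2$ is Lipschitz on bounded subsets of $L^2$; in particular $F_n \to F$ in $L^2$ forces $\|F_n\|_A^2 \to \|F\|_A^2$.

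The two inequalities are then immediate. Since $\mathcal{V}_\xi \subseteq V_\xi$ one has $\inf_{V_\xi}\|\cdot\|_A \le \inf_{\mathcal{V}_\xi}\|\cdot\|_A$ trivially. Conversely, fix $F \in V_\xi$ and pick $F_n \in \mathcal{V}_\xi$ with $F_n \to F$ in $L^2$; then $\inf_{\mathcal{V}_\xi}\|\cdot\|_A^2 \le \lim_n \|F_n\|_A^2 = \|F\|_A^2$, and taking the infimum over $F\in V_\xi$ gives $\inf_{\mathcal{V}_\xi}\|\cdot\|_A^2 \le \inf_{V_\xi}\|\cdot\|_A^2$. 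Taking square roots concludes.

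I do not expect a genuine obstacle in this lemma; the only point requiring (minor) care is the uniform bound on the number of overlapping patches, which is exactly what makes $A$ bounded and hence the quadratic form continuous on $L^2$. (Note that no coercivity or lower bound on $A$ is needed here — only boundedness — which is why the lemma holds for all the regimes of $c$ considered, including the degenerate ones.)
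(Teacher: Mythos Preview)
Your proof is correct and follows essentially the same approach as the paper: both arguments exploit the boundedness of $A$ (equivalently, of $\nabla^\perp\psi$) to deduce that $\|\cdot\|_A$ is continuous with respect to the $L^2$ topology, and then use the density of $\mcV_\xi$ in $V_\xi$ to match the two infima. Your presentation is in fact slightly cleaner, as the paper's version contains a minor typo (the minimizing sequence should be taken in $V_\xi$, not $\mcV_\xi$) and an unnecessary appeal to convexity.
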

    \begin{proof}
        Since $\|\,\cdot\,\|_A$ is a convex functional, there exists a sequence $\{F^n\}_{n\in \mbN} \subseteq \mcV_\xi$ be a sequence such that $\lim_{n\to \infty} \|F^n\|_A = \inf_{F\in \mcV_\xi}\|F\|_A$, and for each $\delta>0,\, n\in \mbN$ let $F^{n, \delta}$ be a smooth approximation to $F^n$ such that $\|F^n- F^{n, \delta}\|_{L^2} = \delta$. This approximation can be constructed noticing that $F\in V_\xi$ implies $F= \xi + \nabla u$ for some $u\in \H^1$, and then approximating $u$. Since $\psi\in C^\infty(\mbT^2;\mbR)$ we have 
        \begin{align*}
            \|F^{n, \delta}\|_A &\le  \left(\sum_{k\in \,\Box^2_1} \int_{\T^2}|F^n\cdot \nabla^\perp\psi(x-k)|^2 + |(F^{n,\delta}- F^n)\cdot \nabla^\perp\psi(x-k)|^2\right)^{1/2}\\ 
            &\le \left(\sum_{k\in \,\Box^2_1} \|F^n\cdot \nabla^\perp\psi(x-k)\|_{L^2}^2 + \|(F^{n,\delta}- F^n)\cdot \nabla^\perp\psi(x-k)\|_{L^2}^2 \right)^{1/2} \\
            &\le \|F^n\|_A + 2\delta\|\nabla\psi\|_\infty \ ,
        \end{align*}
        where we used the subadditivity of the square root. Finally, taking the infimum over $n$ yields the result since $\delta$ was chosen arbitrarily. 
    \end{proof}
    From now on, we will keep $\xi$ fixed and drop the subscript $\xi$ in $V_\xi$ and $\mathcal{V}_\xi$. 
    For the next lemma we regard $F$ as a function defined on $\R^2$ and perform integrations over $[0,1]^2$.
\begin{lemma}\label{lem: line integral}
        Let $F\in \mathcal{V}$, $a\in [0,1]$ and define the curves 
        $$\gamma_H(t)= (t,a),\quad \gamma_V(t)= (a,t) $$
        and the functions
        $$\quad g_H(x)= x_2-a, \quad g_V(x)= -x_1+a.$$
       Then, it holds that 
       \begin{align*}
           \int_0^1F_1(t, a)dt = \int_{\gamma_H}g_H F d\tau =\xi_1 \quad \text{and}\quad  \int_0^1F_1(a, t)dt = \int_{\gamma_V}g_V F d\tau =-\xi_2.
       \end{align*}
\end{lemma}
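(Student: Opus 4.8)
The plan is to use only two structural properties of an element $F\in\mathcal V$: that $\nabla^\perp\cdot F=0$, which (as already observed in the proof of the previous lemma) is equivalent on the torus to $F=\xi+\nabla u$ for some periodic $u\in\mathcal{H}^1(\T^2)$; and that $F$ is genuinely $1$-periodic in each variable with $\int_{\T^2}F=\xi$. The entire content of the lemma is that the circulations of $F$ around the two generating loops of $\T^2$ are ``topological'', i.e.\ independent of the height $a$, hence equal to their $a$-averages, which are the components of $\xi$.

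First I would treat the horizontal loop. Since $\dot\gamma_H\equiv e_1$, the line integral of $F$ along $\gamma_H$ is by definition $I(a):=\int_0^1 F_1(t,a)\,dt$. Differentiating under the integral sign and using the identity $\partial_2 F_1=\partial_1 F_2$ (which is exactly $\nabla^\perp\cdot F=0$) gives
\[
I'(a)=\int_0^1 \partial_2 F_1(t,a)\,dt=\int_0^1 \partial_1 F_2(t,a)\,dt=F_2(1,a)-F_2(0,a)=0,
\]
the last equality by $1$-periodicity of $F_2$ in the first variable. Hence $I$ is constant on $[0,1]$, so $I(a)=\int_0^1 I(b)\,db=\int_{\T^2}F_1=\xi_1$, which is the first identity. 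To rewrite it in the form $\int_{\gamma_H}g_H F\,d\tau$ I would apply Green's theorem on the rectangle $R_a:=[0,1]\times[0,a]$ cut out by $\gamma_H$: since $\nabla g_H=e_2$ one has $\nabla^\perp\!\cdot(g_H F)=-F_1$ (using $\nabla^\perp\cdot F=0$ once more), while the two pieces of $\partial R_a$ transverse to the strip cancel against each other by periodicity of $F_2$, and what survives recombines into the stated weighted line-integral expression. The vertical loop is symmetric: setting $J(a):=\int_0^1 F_2(a,t)\,dt$ one finds $J'(a)=\int_0^1\partial_1 F_2(a,t)\,dt=\int_0^1\partial_2 F_1(a,t)\,dt=F_1(a,1)-F_1(a,0)=0$, so $J\equiv\int_{\T^2}F_2=\xi_2$; the passage to the form involving $g_V$ is the same Green's theorem computation on $[0,a]\times[0,1]$, the sign in the displayed identity being dictated by $\nabla g_V=-e_1$ and the chosen orientation.

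I do not expect a genuine obstacle here; the only mildly delicate point is the bookkeeping in the Green's theorem step that matches the bare line integral with the $g_H$- and $g_V$-weighted expressions — in particular, noting that the two boundary segments of the cut rectangle running parallel to the strip cancel precisely because the relevant component of $F$ is periodic on $\T^2$ (this is where one uses that $F$ lives on the torus, not merely on $[0,1]^2$). As an alternative, cleaner but less self-contained route, one can simply observe that $\omega:=F_1\,dx_1+F_2\,dx_2$ is a closed $1$-form on $\T^2$ — closedness being equivalent to $\nabla^\perp\cdot F=0$ — and invoke homotopy invariance of the periods of closed forms, the periods over the horizontal and vertical generators being $\xi_1$ and $\xi_2$ after integrating in $a$; I would nonetheless keep the elementary computation above.
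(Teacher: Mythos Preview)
Your argument is correct and complete for the part of the lemma that is actually used later (the identity $\int_0^1 F_1(t,a)\,dt=\xi_1$ for every $a$, and its vertical analogue). It is, however, a genuinely different route from the paper's.

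The paper works on the full square: it observes that $e_i=\pm\nabla^\perp g_i$ for $i\in\{H,V\}$, writes $\xi_i=\int_{[0,1]^2}F\cdot\nabla^\perp g_i$, and applies Green's theorem to convert this into the boundary integral $\int_{\partial[0,1]^2}g_i\,F\cdot d\tau$. Periodicity of $F$ then kills all but one side, giving the line integral at height $1$; a shift $F(x_1,x_2)\mapsto F(x_1,x_2-1+a)$ gives the general $a$. In particular, the auxiliary functions $g_H,g_V$ are not merely notation in the paper's argument---they are the key device that turns the area integral into a single line integral.

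Your approach is more elementary: you differentiate $I(a)=\int_0^1 F_1(t,a)\,dt$ in $a$, use $\partial_2F_1=\partial_1F_2$ and periodicity to get $I'\equiv 0$, and then average in $a$ to identify the constant value with $\xi_1$. This is cleaner for the identity itself and, as you note, is the statement that $\omega=F_1\,dx_1+F_2\,dx_2$ is closed so its periods are homotopy invariants. The price is that the intermediate expression $\int_{\gamma_H}g_H F\,d\tau$ does not fall out for free; you handle it by a separate Green's theorem computation on the sub-rectangle $[0,1]\times[0,a]$. Since the corollary that follows only uses $\int_0^1 F_1(t,a)\,dt=\xi_1$, this middle identity is in any case cosmetic.

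One small remark: in the vertical case you (correctly) prove $\int_0^1 F_2(a,t)\,dt=\xi_2$, whereas the displayed statement reads $\int_0^1 F_1(a,t)\,dt=-\xi_2$; this is a typo in the statement, and your version is the right one.
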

\begin{proof}
    Consider the vector fields $v_1(x)=e_1, \ v_2(x)=e_2$. It is readily seen that $v_i(x)= \nabla^\perp g_i(x)$ with $i \in \{H,V\}$. For $F \in \mathcal{V}$ we have 
    $$\xi_i=\int _{[0,1]^2}F_i = \int_{[0,1]^2}F\cdot\nabla^\perp g_i= \int_{\partial [0,1]^2} g_iF \cdot d\tau $$
    For the sake of the computation fix $i=H$. Using the periodicity of $F$ and the symmetries of $g_H$ the integration reduces to 
    \begin{align*}\int_{\partial [0,1]^2} g_HF \cdot d\tau &= \int_0^1 x_2 F_2(1, x_2)\dd x_2 + \int_0^1 F_1(x_1, 1)\dd x_1 - \int_0^1 x_2 F_2(0, x_2)\dd x_2 \\
    &=\int_0^1 F_1(x_1, 1)\dd x_1
    \end{align*}
    to obtain the case for general $a\in [0,1]$, replace $F$ by $F(x_1, x_2-1+a)$. The case $i=V$ is analogous. 
\end{proof}
Thanks to this lemma we obtain the following corollary
\begin{corollary}\label{cor:c_great_root2_over_2}
Let $c>\nicefrac{\sqrt{2}}{2}$ and $\xi \in \mathbb{S}^1$. Then it holds that, 
\begin{enumerate}[label = \roman*)]
    \item  \label{it:norm_F_inf_over_V} $\inf_{F\in V} \|F\|_A >0.$ 
    \item \label{it:lower_bound_energy} There exists a strictly positive constant $m>0$ such that 
    \begin{equation}
       \inf_{u\in \mcH^1(\mbT^2)} \E_0(u)= \int (\xi+\nabla u(x))^tA(x)(\xi+\nabla u(x))\dd x \geq m
    \end{equation}
    \item \label{it:lower_bound_viscosity} For every $k>0$, if $u_k^*$ is the minimizer of $\E_k$, it holds that $\E_k(u_k^*)\ge \kappa + \kappa\|\nabla u_\kappa^*\|_{L^2_x} + m$ and thus 
    \begin{equation*}
        C(c, \kappa)\ge \kappa+ \kappa\|\nabla u_\kappa^*\|^2_{L^2}+m.
\end{equation*}
\end{enumerate}
\end{corollary}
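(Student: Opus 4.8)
The plan is to isolate part \ref{it:norm_F_inf_over_V} as the only substantial point and to deduce parts \ref{it:lower_bound_energy}--\ref{it:lower_bound_viscosity} from it together with a one-line algebraic identity for $\E_\kappa$.

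\emph{Reductions.} For every $\kappa\ge 0$ and $u\in\mcH^1(\T^2)$,
\begin{equation*}
  \E_\kappa(u) = \int_{\T^2}\!\Big(\kappa\,|\xi+\nabla u|^2 + (\xi+\nabla u)^\top A(x)(\xi+\nabla u)\Big)\dd x = \kappa\,\|\xi+\nabla u\|_{L^2_x}^2 + \E_0(u),
\end{equation*}
and, since $|\xi|=|\T^2|=1$ and $\int_{\T^2}\nabla u=0$ by periodicity,
\begin{equation*}
  \|\xi+\nabla u\|_{L^2_x}^2 = |\xi|^2 + 2\,\xi\cdot\!\int_{\T^2}\!\nabla u + \|\nabla u\|_{L^2_x}^2 = 1 + \|\nabla u\|_{L^2_x}^2 .
\end{equation*}
Smooth curl-free mean-$\xi$ fields are exactly $\{\xi+\nabla u:u\in C^\infty(\T^2)\}$ and are $L^2$-dense in $V=V_\xi$, so $\inf_{u\in\mcH^1(\T^2)}\E_0(u)=\inf_{F\in V}\|F\|_A^2=:m$; part \ref{it:norm_F_inf_over_V} then gives $m>0$, which is \ref{it:lower_bound_energy}. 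Inserting $\E_0(u)\ge m$ into the identity yields $\E_\kappa(u)\ge \kappa+\kappa\|\nabla u\|_{L^2_x}^2+m$ for every $u$; evaluating at the minimiser $u_\kappa^*$ of $\E_\kappa$ and recalling from \autoref{sec: variational} that $C(c,\kappa)=\E_\kappa(\phi_\xi)=\E_\kappa(u_\kappa^*)$ gives \ref{it:lower_bound_viscosity}.

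\emph{Part \ref{it:norm_F_inf_over_V}: rigidity.} By \autoref{lem: inf equality} it suffices to bound $\inf_{F\in\mcV}\|F\|_A$ from below. I would first establish that \emph{no} $F\in V$ has $\|F\|_A=0$. Such an $F$ would satisfy $F(x)\in\ker A(x)$ for a.e.\ $x$; but for $c>\nicefrac{\sqrt2}{2}$ the analysis in \autoref{lem:A_cases} (cases 3--5) shows that $A(x)$ is positive definite on an open connected set $\mcO\subseteq\T^2$ whose complement is contained, up to the Lebesgue-null edges $\partial[0,1]^2$, in a single contractible neighbourhood of the corner of $\T^2$; hence $\mcO$ carries loops winding around $\T^2$ in each coordinate direction. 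Writing $F=\xi+\nabla u$, $u\in\mcH^1(\T^2)$, we get $\nabla u=-\xi$ a.e.\ on $\mcO$, so integrating $\nabla u$ along such loops (chosen to avoid the edges) and using that $u$ is single-valued forces $\xi\cdot e_1=\xi\cdot e_2=0$, contradicting $\xi\in\mathbb{S}^1$.

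\emph{Part \ref{it:norm_F_inf_over_V}: coercivity upgrade.} Upgrading ``never zero'' to ``strictly positive infimum'' is the delicate step, since $\|\cdot\|_A$ is \emph{not} coercive on $V$: on a compact $K\Subset\mcO$ one has $A\ge\lambda_0 I$, so a minimising sequence $F^n=\xi+\nabla u^n$ obeys $F^n\to 0$ in $L^2(K)$, but nothing a priori controls $F^n$ on the degenerate set, where (by \autoref{rem:A_kernel}) $\ker A(x)$ is the radial direction about the active vortex centre (the edge direction on $\partial[0,1]^2$). I would argue by contradiction, splitting according to whether $\|\nabla u^n\|_{L^2_x}$ stays bounded --- then extract a weak $L^2$ limit $F^\star\in V$, use weak lower semicontinuity of the convex functional $\|\cdot\|_A^2$ to get $\|F^\star\|_A=0$, and invoke the rigidity statement --- or diverges, in which case one rescales $v^n=u^n/\|\nabla u^n\|_{L^2_x}$ to obtain mean-free $v^n$ with $\|\nabla v^n\|_{L^2_x}=1$, $\|\nabla v^n\|_A\to 0$, and $v^n\rightharpoonup v$ in $\mcH^1$, $v^n\to v$ in $L^2$ by Rellich. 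Here the kernel geometry enters quantitatively: a curl-free field with vanishing tangential component about a centre is a radial gradient, hence frozen once known on one circle, while $\partial\mcO$ is swept out over a whole interval of radii about each corner, so the only radial gradient compatible with $\nabla v\in\ker A$ a.e.\ and with $v$ constant on $\mcO$ is $\nabla v\equiv 0$; equivalently one can run the dual description $\inf_{F\in V}\|F\|_A^2>0\Leftrightarrow \xi+\nabla^\perp g\in\operatorname{Range}(A^{1/2})$ for some $g$ and build an explicit divergence-free competitor that is radial (hence in $\operatorname{Range}A$) near the degenerate set with $A^{-1/2}$-weighted square integrability across the transition zone and the edges.

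\emph{The main obstacle.} The hard part is precisely the passage from weak to strong convergence in the divergent case: a weak limit of norm-one gradients need not vanish, so $\nabla v\equiv 0$ alone does not contradict $\|\nabla v^n\|_{L^2_x}=1$. Closing this gap requires showing that the $L^2$-mass of $\nabla v^n$ can neither oscillate away into high frequency nor concentrate near $\partial\mcO$, which must be extracted from the combination of $v^n\to v$ in $L^2$, the bound $\|\nabla v^n\|_A\to 0$ controlling all non-degenerate directions, and a quantitative statement that $\lambda_{\min}(A)$ stays bounded below away from the corner cluster and degenerates only in a controlled, radial way near it --- this last, genuinely technical, point is where the full strength of the geometric picture of \autoref{lem:A_cases} and \autoref{rem:A_kernel} (together, in the range $c\ge 1$, with the explicit inverse in \eqref{A inverse}) has to be brought to bear.
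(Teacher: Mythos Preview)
Your reductions of \ref{it:lower_bound_energy} and \ref{it:lower_bound_viscosity} to \ref{it:norm_F_inf_over_V} via the identity $\E_\kappa(u)=\kappa(1+\|\nabla u\|_{L^2_x}^2)+\E_0(u)$ are exactly what the paper does. The divergence is entirely in \ref{it:norm_F_inf_over_V}, and there the gap you yourself flag is real and, as stated, not closable along the lines you suggest.

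In the unbounded branch of your compactness dichotomy, even granting that the weak $\mcH^1$ limit $v$ of the rescaled $v^n$ satisfies $\nabla v\equiv 0$, you have only shown $\nabla v^n\rightharpoonup 0$ in $L^2$, which is fully compatible with $\|\nabla v^n\|_{L^2_x}=1$. The oscillation/concentration scenario you describe is not excluded by anything in the hypotheses: on the rank-one region near the corner $A$ controls only the tangential component, so $\nabla v^n$ can oscillate arbitrarily in the radial (kernel) direction while keeping $\|\nabla v^n\|_A$ small. Geometric information about $\ker A(x)$ constrains the \emph{limit} $\nabla v$, not the sequence; you would need a genuine quantitative Poincar\'e-type inequality adapted to the degenerate weight $A$, and none is available here. (Your bounded branch, by contrast, is fine: $V$ is a closed affine subspace of $L^2$, hence weakly closed, and $\|\cdot\|_A^2$ is weakly lower semicontinuous, so rigidity gives the contradiction.)

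The paper bypasses compactness entirely with a direct, quantitative duality argument, and the tool you are missing is \autoref{lem: line integral}: for every $F\in\mcV$ and every height $a$, $\int_0^1 F_1(t,a)\,dt=\xi_1$. Integrating this against a bump $\varphi_1(x_2)$ supported in a thin strip $|x_2-\nicefrac12|\le\delta$ gives $\int_{\T^2}F\cdot(\varphi_1,0)\ge\tfrac{\delta}{2}\xi_1$, a lower bound \emph{uniform in $F$}. On the other hand, for $c>\nicefrac{\sqrt2}{2}$ this strip lies where $\{\nabla^\perp\psi(\cdot-k)\}_{k\in\Box^2_1}$ spans $\mbR^2$ (and $e_1$ stays in the range of $A$ even on the vertical edges, cf.\ \autoref{rem:A_kernel}), so the test vector $(\varphi_1,0)$ can be written as $\sum_k a_k(x)\,\nabla^\perp\psi(x-k)$ with bounded $a_k$, whence $\bigl|\int F\cdot(\varphi_1,0)\bigr|\lesssim\|F\|_A$. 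Combining the two inequalities yields $\|F\|_A\gtrsim\delta|\xi_1|$ directly for every $F\in\mcV$, and \autoref{lem: inf equality} transfers this to $V$. Your winding-loop constraint is morally the same flux identity, but the paper exploits it quantitatively at the level of the functional rather than only at the level of a putative limit.
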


\begin{proof}
    By Lemma \eqref{lem: inf equality}, it is enough to prove \ref{it:norm_F_inf_over_V} over $\mathcal V$ rather than $V$.
    Suppose by contradiction that there exists a sequence $F^n \in \mathcal{V}$ such that $\|F^n\|_A \rightarrow 0$. First we prove a statement regarding a special set of test functions, then we will show that if $c>\sqrt{2}/2$, this set of test functions is rich enough to derive a contradiction. Let $\varphi$ be an $L^2(\mbT^2;\mbR^2)$ function which can be written as
    \begin{equation}\label{test function}
        \varphi(x) := \sum_{k \in \, \Box^2_1}a_k(x)\nabla^\perp \psi(x-k). \end{equation}
    for some $a_k(x) \in L^2(\T^2)$ and where $\psi$ is as in \eqref{eq:psi_props}. We have
    $$\left| \int_{\T^2} F^n \cdot \varphi \right| =\left |\sum_{k\in \, \Box^2_1}\int_{\T^2} a_k(x)F^n \cdot \nabla^\perp \psi(x-k) \right|\le \sum_{k\in \Box^2_1} \|a_k\|_{L^2}\left( \int_{\T^2}|F^n \cdot \nabla^\perp \psi(x-k)|^2\right)^{1/2}.$$
    Which yields 
    \begin{equation}\label{weak conv zero}
        \left|\int_{\T^2} F^n \cdot \varphi \right| \lesssim \|F^n\|_A \xrightarrow[]{n} 0.
        \end{equation}
   Now choose any $\delta>0$ and a test function $\hat\varphi=(\varphi_1(x_2), 0)$, that is positive and equal to one if $|x_2 -\nicefrac{1}{2}| \le \nicefrac{\delta}{2}$ and zero if $|x_2 -\nicefrac{1}{2}| \ge\delta$.  By lemma \eqref{lem: line integral} and Fubini-Tonelli
    \begin{align*}
        \int_{\T^2}F^n \cdot \hat \varphi &=\int_{\nicefrac{1}{2}-\delta}^{\nicefrac{1}{2}+\delta}\int_0^1 F_1^n(x_1, x_2) \varphi_1(x_2)\dd x_1 \dd x_2 \\ &= \int_{\nicefrac{1}{2}-\delta}^{\nicefrac{1}{2}+\delta} \varphi_1(x_2)\left(\int_0^1 F_1^n(x_1,x_2) \dd x_1\right) \dd x_2
        \\ & =\int_{\nicefrac{1}{2}-\delta}^{\nicefrac{1}{2}+\delta}\varphi_1(x_2)\xi \dd x_2 
        \\ &\ge \frac{\delta}{2} \xi_1 \ .
    \end{align*}
      Assume now for the sake of the computations that $\xi_1\neq 0$. Then, this inequality, together with \eqref{weak conv zero} yields a contradiction, provided that we can write the test function $\hat \varphi$ in the form of \eqref{test function}. The last step of the proof is to show that this is possible provided that $c>\sqrt{2}/2$, and $\delta$ is sufficiently small. \\
      Recalling \autoref{lem:A_cases}, we know that, if $c>\sqrt{2}/2$, there exists a strip 
      $$S_c= [0,1]\times [1/2 - \delta(c), 1/2 + \delta(c)]$$
      for $\delta(c) = \sqrt{c^2 - \nicefrac{1}{4}} - \nicefrac{1}{2}  = >0$
      where the set of vectors $\B(x)=\{\nabla^\perp \psi(x-k)\}_{k \in \, \Box^2_1}$, contains at least two vectors. Moreover, if $x\in \mathring S_c$ (the interior of $S_c$), again by the proof of \autoref{lem:A_cases} , $\B(x)$ spans $\R^2$.
      In addition, thanks to Remark \eqref{rem:A_kernel}, we know that for $x \in\partial [0,1]^2$, $\B(x)$ spans the direction normal to the boundary, that is: 
     $$\operatorname{span}\B(x)  = \operatorname{span}(e_2) \qquad for \ x=(x_1, 0) \ or \ x=(x_1, 1) $$
    $$\operatorname{span}\B(x)  = \operatorname{span}(e_1) \qquad for \ x=(0, x_2) \ or \ x=(1, x_2).$$
    Consider the smaller strip 
    $$ S^{1/2}_c=[0,1]\times [\nicefrac{1}{2} - \nicefrac{\delta(c)}{2}, \nicefrac{1}{2} + \nicefrac{\delta(c)}{2}]$$
    The two facts above together ensure that the vector $e_1$ is in the range of $A(x)$ for every $x \in S^{1/2}_c$, thus, there exists functions $\tilde a_k(x)$ given by $\tilde a_k(x)= \nabla^\perp\psi(x-k)\cdot w(x)$ for $\omega(x)$ such that $A(x)\omega(x)= e_1$ such that $e_1= \sum_{k \in \, \Box^2_1} \tilde a_k(x)\nabla ^\perp \psi(x-k)$. Finally, thanks to the smoothness of $A(x)$ and the fact that $\omega(x)=e_2$ for $x=(0, x_2), \ (1,x_2)$, the coefficients $\tilde a_k(x)$ can be chosen bounded (even smooth) in the smaller strip $S^{1/2}_c$.  
    Now it is enough to chose $\delta \le \delta(c)/2$ in the definition of $\hat\varphi$, to see that we have the representation 
    $$\hat\varphi (x) = \sum_{k \in \, \Box^2_1 }\varphi_1(x_2)\tilde a_k(x) \nabla^\perp \psi(x-k)$$
    
    Finally, to conclude the proof of \ref{it:norm_F_inf_over_V}, if $\xi_1=0$, we repeat the same argument with $\varphi= (0, \varphi_2)$ with 
    $$\supp(\varphi) \subset [1/2 -\delta, 1/2+\delta]\times [0,1].$$
    Claim \ref{it:lower_bound_energy} follows straightforwardly by setting $F= \xi + \nabla u \in V$ while claim \ref{it:lower_bound_viscosity} follows simply by noting that 
    $$\E_\kappa(u)= \kappa + \kappa \|\nabla u\|_{L^2}^2 + \|\xi + \nabla u\|_A^2.$$
\end{proof}
\begin{remark}\label{rem:c_greater_root_2_over_2_scaling}
    Observe that the functional $\|\cdot \|_A$ is $1-$homogeneous in $\psi$, that is if we multiply the profile $\psi$ by a factor $\lambda$, and call the new matrix $A_\lambda(x)$ we get
     $$\|F\|_{A_\lambda{}}^2 = \lambda^2\sum_{k\in \, \Box^2_1} \int_{\T^2}|F\cdot \nabla^\perp\psi(x-k)|^2\dd x >\lambda^2 m.$$
     As a consequence, for $c\ge 1$, and $\kappa$ fixed, $C(c, \kappa)$ can be made arbitrarily large by scaling $\psi$ to $\lambda \psi$ in the definition of the patches (or equivalently choosing $\theta_k^N = \lambda/N$ in the scaling assumptions).
\end{remark}
\section{Numerical Simulations}\label{sec: numerics}
As anticipated in the previous sections, the behaviour of the total effective diffusivity $C(c,\kappa)$ as $\kappa \to 0$ is not completely understood. To address this issue, a numerical estimate of the total effective diffusivity $C(c,\kappa)$ can be performed by solving equation \eqref{eq:corrector}.
\subsection{Numerical setup}
We numerically solve the equation on a two-dimensional lattice $[0,1]\times[0,1]$, discretized on a $n\times n$ grid of grid-space $d = 1/ n$, and $H_\kappa(x) = \kappa I + A(x)$ is a $2n \times 2n$ matrix. 
In our numerical setup, the orthogonal gradient of $\psi$ is taken as:
\begin{equation} 
    \mbT^2 \ni x\mapsto \nabla^{\perp}\psi(x) = \frac{x^\perp}{|x| } \frac{\varphi(|x|)}{\|\varphi\|_{L^2_x}},
    \label{eq:nabla_psi_numerics}
\end{equation}
In order to be able to compare different values of the parameter $c$, the function $\varphi(x)$ is normalized, so that the following condition holds:
\begin{equation}
  \|\nabla \psi\|_{L^2_x} = 1 .
\end{equation}
The following choice of $\mbR_+\ni r \mapsto \varphi(r)\in \mbR$ is used, in order to satisfy conditions \eqref{eq:psi_props} :
\begin{equation} 
\varphi(r) = \frac{1}{r^2}  \exp{ \left( - \frac{a_1}{r^2} \right) } 
            \exp{ \left( - \frac{a_2 r^2}{ | r - c |} \right) }
            \label{eq:phi_numerics}
\end{equation}
with $a_1$, $a_2$ parameters. 

We define the second order elliptic operator $T$ in divergence form as:

\begin{equation}
    T \phi_e:=-\frac{1}{2}\left(\sum_{i=1}^2 D_i^{-}\left(\sum_{j=1}^2 H_{i j} D_j^{+} \phi_e \right)+\sum_{i=1}^2 D_i^{+}\left(\sum_{j=1}^2 H_{i j} D_j^{-} \phi_e \right)\right) \ , 
    \label{eq:matrix_t_def_numerics}
\end{equation}
where $D_i^{-}$ and $D_i^{+}$ are, respectively, the forward and backward difference operators. The two directions are denoted by the index $i=1,2$. This numerical approximation is consistent up to second order, see \cite{grossmann2007numerical}. In this framework, the corresponding divergence operator is defined as:
\begin{equation*}
    \text{div} f := \frac{1}{2} \sum_{i=1}^n (D_i^{-} + D_i^{+} ) f_i .
\end{equation*}
We solve the two linear systems
\begin{equation}
    T \phi_1 =   \text{div} \left( A \cdot e_1 \right) \ ; \quad 
     T \phi_2 =   \text{div} \left( A \cdot e_2 \right)
\label{eq:t_system_numerics}
\end{equation}
employing the direct solver implemented in MATLAB.

After solving the linear systems, we compute the total effective diffusivity, \eqref{eq:total_viscosity}, for different values of the parameter $c$. The total diffusivity is obtained from the real part of the first eigenvalue of the $2\times2$ matrix obtained by averaging on the whole grid. The computation of the total diffusivity from the other eigenvalue leads to consistent results, as the $2\times2$ matrix is (almost) diagonal. The compatibility of the imaginary part of both the eigenvalues with zero was checked. 

In all cases, the data obtained at different values of the molecular diffusivity $\kappa$ were computed for different values of the gridstep $d$: $d= 0.0100, 0.00222, 0.00167, 0.00125$. Then, the final estimate for the total diffusivity was obtained by extrapolating for $d \to 0$.

We check that a compatible result is obtained by plugging our solution into the definition of the total diffusivity in the variational setting, \eqref{eq:total_viscosity_var}. This second quantity is also extrapolated for $d \to 0$. 
\subsection{Results}\label{sec:numerical_results}
In the following, we report our results concerning the behaviour of the additional diffusivity, $C(c,\kappa) - \kappa$, as $\kappa \to 0$. In most of the section, we focus on a single choice of the profile function $\varphi(x)$; however, the study of $\varphi(x)$ for different choices of parameters is crucial to identify which of our results depends on the choice of the profile function and how. From the previous analysis, we expect our results to be stable under the choice of the profile function for $c \in (0, \nicefrac{1}{2})$. 
\subsubsection{Choice of profile function and corresponding solution of corrector equation}
As a first step, we investigate the dependence of the profile function $\varphi(x)$ on the parameters $a_1$ and $a_2$. We report our choice for $\varphi(x)$ in the upper plot of \autoref{fig:varphi}, while in the lower plots of \autoref{fig:varphi} we show other possible choices for the two parameters $a_1$ and $a_2$. The parameters have to be chosen carefully, as the function could drop dramatically to a value compatible with zero already at $x' < c$, inside the chosen support radius $c$. 
The matrix $T$ was de-singularized by subtracting to the corrector its mean value.
    \begin{figure}
    \begin{center}
       \includegraphics[width=.49\textwidth]{FIG/varphi.eps}  
    \end{center}
    \includegraphics[width=.49\textwidth]{FIG/varphi_a1.eps}
    \includegraphics[width=.49\textwidth]{FIG/varphi_a2.eps} 
    \caption{Up: plot of $\varphi(x) / \|\varphi\|_{L^2_x} $, with $\varphi(x)$ defined in \eqref{eq:phi_numerics}, with our choice $a_1 = 0.05$, $a_2 = 0.3$, at $c=1.2$. This choice of the parameter $c$ shows that for $c>1$ the profile function is not zero outside the interval [-1,1]. \\ 
    Down: different possible choices of the coefficients $a_1$ (left, $a_2$ is held fixed, $a_2 = 0.3$), $a_2$ (right, $a_1$ is held fixed, $a_1 = 0.05$), at $c=1.2$. Choices of parameters employed in numerical simulations are plotted in blue, with $a_1 = 0.05$, $a_2 = 0.3$. Note that the plot on the right hints that for $a_2 > 0.6$ the support of the function drops to a value compatible with zero already before $c=1.2$.}
\label{fig:varphi} 
\end{figure}

\subsubsection{Additional Diffusivity}\label{sec:aditional_diffusivity}
%
To study the dependence of the additional diffusivity on the parameter $c$ as $\kappa \to 0$, we start by performing simulations for a single set of parameters: $a_1 = 0.05$, $a_2 = 0.3$. Results obtained with this setup are reported in \autoref{fig:add_visc}: they are divided into four different groups, $c\in (0,\nicefrac{1}{2}]$, $c\in (\nicefrac{1}{2},\nicefrac{\sqrt{2}}{2})$,  $c\in (\nicefrac{\sqrt{2}}{2}), \nicefrac{\sqrt{5}}{2})$, $c > \nicefrac{\sqrt{5}}{2}$. From those data, we can already observe that for $c \in (0, \nicefrac{1}{2})$ the behaviour of the additional diffusivity for $\kappa \to 0$ is linear, with intercept indistinguishable from zero, while for values of $c$ in the interval $c \in (\nicefrac{1}{2}), \nicefrac{\sqrt{2}}{2}))$ the intercept is again compatible with zero, but the additional diffusivity seems to follow a power law behaviour. The same behaviour, but with non-zero intercept, is present for $c \in (\nicefrac{\sqrt{2}}{2}) ,\nicefrac{\sqrt{5}}{2})$. On the contrary, for $c > \nicefrac{\sqrt{5}}{2}$ the intercept seems to be still non-zero, however the behaviour of the additional diffusivity is again linear. 
We remark that these result are in perfect agreement with the conclusion of \autoref{th:main_introduction}, moreover, we have conclusive results, from the numerical simulations, also in the regime $c\in (\nicefrac{1}{2}), \nicefrac{\sqrt{2}}{2})$, which was outside the reach of our theorem. The case $c=\nicefrac{1}{2}$ is in-between the two regimes, the power law behaviour and the linear behaviour, probably due to numerical artifacts.\\

To better investigate the different behaviours of the additional diffusivity for different values of the parameter $c$, we perform a set of fits in the range $0.0001 < \kappa < \kappa^\ast$ using the following function:
\begin{equation*}
    f(\kappa) = a \kappa^n + q .
\end{equation*}
We choose different values of $\kappa^\ast$, and report results for $n$ and $q$ in \autoref{fig:inter}. The fit procedure was performed by taking into account constraints for the parameter $a \in [0, + \infty)$, as values outside this interval would be unphysical and so are not considered. Those constraints were enforced by fitting the parameter $a'$, with $a = e^{a'}$. The same procedure was not applied for the parameter $q$, to allow it to become negative if very small.
The stability of the fit procedure was tested by performing jackknife procedure for each dataset in the following way: first, the fit was performed for different values of $\kappa^\ast$ in the interval $(0, \kappa^\ast]$, with $\kappa^\ast \in [0.0016, 0.005]$; then, the jackknife procedure was applied to the fit performed in each interval. To ensure the same effect on each dataset, we removed $95\%$ of the points in each interval, not only one. 
The jackknife estimate for each interval was used to to reduce bias and estimate uncertainty, in the usual fashion.
 Typically, in the case of the power law exponent and the coefficient $a$, the fit remains stable only for small $k^\ast$, however to estimate the final value also strictly larger $k^\ast$ estimates were taken into account. In particular, the power law exponent has a reduced stability interval for $c>1.1$. The intercept value, on the other hand, is stable for larger values of $k^\ast$. 
\begin{figure}
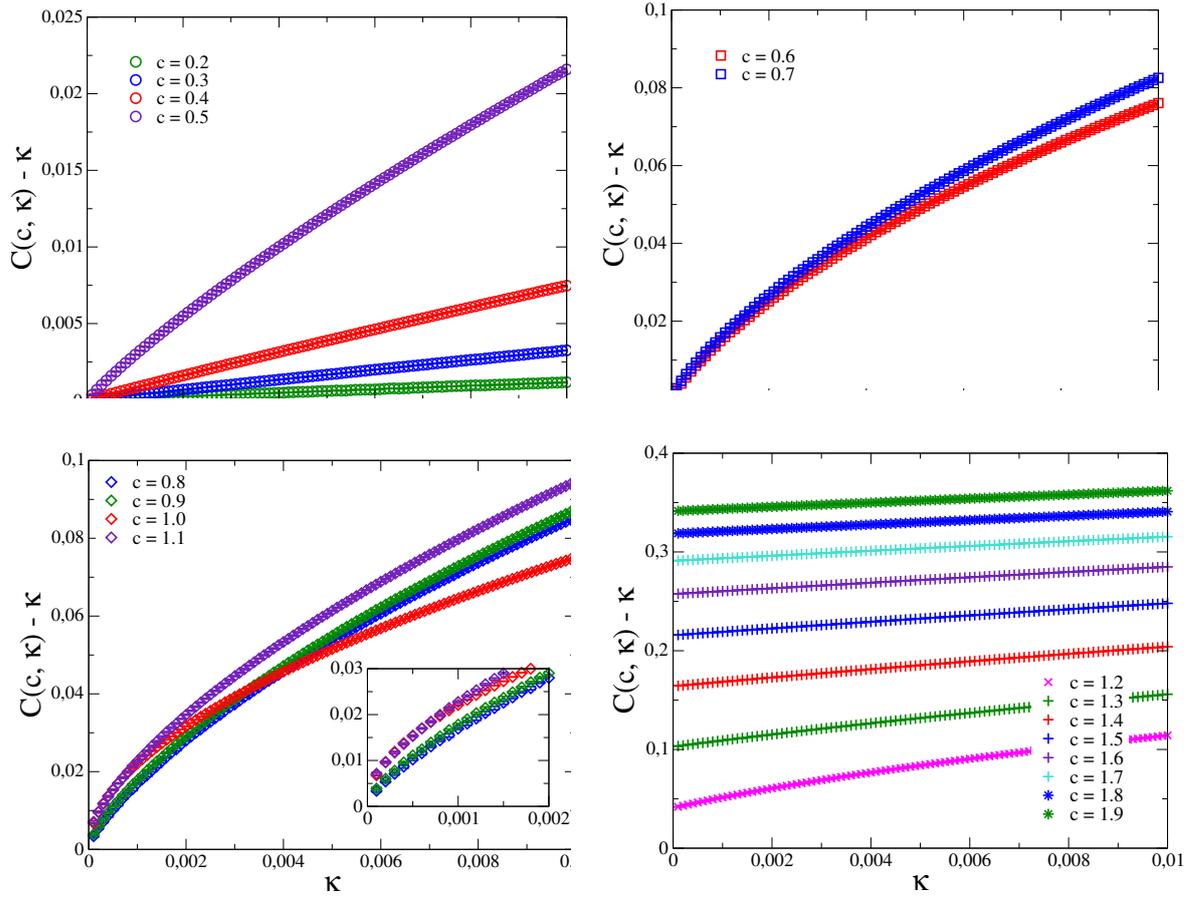

    \begin{center}
      \setlength{\unitlength}{\textwidth}
        \includegraphics[width=.49\textwidth]{FIG/add_visc_small_c.eps}
        \includegraphics[width=.49\textwidth]{FIG/add_visc_medium_c.eps} \\
        \includegraphics[width=.49\textwidth]{FIG/add_visc_medium_2_c.eps}
         \includegraphics[width=.49\textwidth]{FIG/add_visc_large_c.eps}
            \end{center}\vspace{-1em}
    \caption{
    Plot of additional diffusivity $C(c, \kappa)$, for different values of $c$, from left to right and top to bottom: $c\in (0, \nicefrac{1}{2}]$,$c\in(\nicefrac{1}{2}, \nicefrac{\sqrt{2}}{2})$, $c \in (\nicefrac{\sqrt{2}}{2}, \nicefrac{\sqrt{5}}{2})$, $c> \nicefrac{\sqrt{5}}{2}$ (with zoom at $\kappa \to 0$ if needed). The additional diffusivity is computed by means of \eqref{eq:total_viscosity}. Compatibility with results obtained by using \eqref{eq:total_viscosity_var} was separately checked.  The profile function used is $\varphi(x)$, defined in \eqref{eq:phi_numerics}, with $a_1 = 0.05$, $a_2 = 0.3$. The top right panel exhibits values of $c$ in the range $c\in (\nicefrac{1}{2}, \nicefrac{\sqrt{2}}{2})$, which we highlight lies outside the statements of \autoref{th:main_introduction}.}
\label{fig:add_visc2} \end{figure}
\begin{figure}
    \begin{center}
      \setlength{\unitlength}{\textwidth}
        \includegraphics[width=.49\textwidth]{FIG/inter.eps}
        \includegraphics[width=.49\textwidth]{FIG/exp.eps}
            \end{center}\vspace{-1em}
    \caption{ Intercept estimate (left) and power law exponent estimate (right) as a function of the parameter $c$. The profile function used is $\varphi(x)$, defined in \eqref{eq:phi_numerics}, with $a_1 = 0.05$, $a_2 = 0.3$. } 
\label{fig:inter} \end{figure}
As it can be seen from \autoref{fig:inter}, for $c \in (0, \nicefrac{1}{2})$ and $c > \nicefrac{\sqrt{5}}{2}$, while we do not obtain exactly $n=1$ for our estimate, the power law exponent approaches the value 1.0 as the fit interval restricts to the values $\kappa \to 0$. In particular, as noted above, the value of $c=\nicefrac{1}{2}$ seems to correspond to a transition behaviour for the additional diffusivity, probably as a consequences of numerical artifacts. Moreover, for $c\in (0,\nicefrac{1}{2}]$ and $c\in (\nicefrac{1}{2},\nicefrac{\sqrt{2}}{2})$ the intercept estimate is compatible with zero. In the interval $c\in (\nicefrac{\sqrt{2}}{2}, \nicefrac{\sqrt{5}}{2})$, the intercept estimate does not seems to be compatible with zero for $c = 1.0$ and $c=1.1$, while for $c=0.8$ the non-zero value is too small to draw conclusions. For $c > \nicefrac{\sqrt{5}}{2}$, the intercept estimate grows as a function of the parameter $c$.
\subsubsection{Sensitivity with Respect to Profile Parameters} 

We close this section by investigating the sensitivity of our results to the choice of the two parameters of the profile function $\varphi(x)$, $a_1$ and $a_2$. 
For specific choices of the parameter $c$, one for each of the investigated regimes, we compute the additional diffusivity for three different choices of $a_1$ and $a_2$, $a_1 = 0.05, 0.5, 1.0$ and $a_2 = 0.08, 0.3, 0.6$. When varying the parameters, one has to take into account that, if the function $\varphi(x)$ goes to zero too fast for $x \to c$, there will be numerical artifacts which have the effect of `shrinking' the support radius (see \autoref{fig:varphi}). 
To avoid the introduction of an `effective support', the values were chosen in order to keep the numerical support (i.e. the value of $x$ for which the function $\varphi$ becomes smaller than a tolerance value) close to $c$. However, even in this case, we cannot expect complete agreement between the different parameter choices.
\begin{figure}
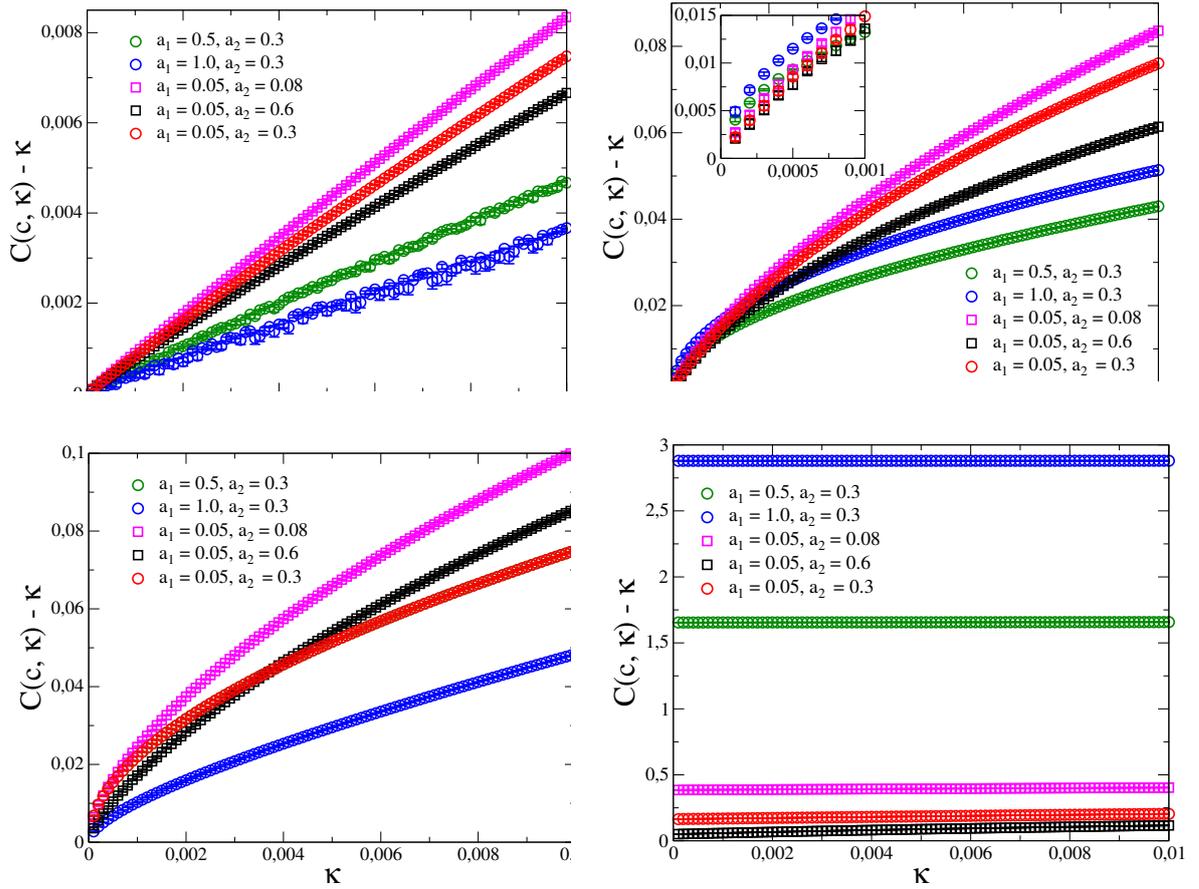

    \begin{center}
        \setlength{\unitlength}{\textwidth}
        \includegraphics[width=.49\textwidth]{FIG/add_visc_a1_a2_0.4.eps}
         \includegraphics[width=.49\textwidth]{FIG/add_visc_a1_a2_0.6.eps} \\
          \includegraphics[width=.49\textwidth]{FIG/add_visc_a1_a2_1.0.eps}
           \includegraphics[width=.49\textwidth]{FIG/add_visc_a1_a2_1.4.eps}
    \end{center}\vspace{-1em}
    \caption{Plot of additional diffusivity, for the values of the parameter $c = 0.4$ (up, left), $c = 0.6$ (up, right), $c = 1.0$ (down, left), $c = 1.4$ (down, right), computed as in \eqref{eq:total_viscosity}. Compatibility with results obtained by using \eqref{eq:total_viscosity_var} was separately checked. The profile function used is $\varphi(x)$, defined in \eqref{eq:phi_numerics}, with $a_1 = 0.05, 0.5, 1.0$, $a_2 = 0.3$, and $a_1 = 0.05$, $a_2 = 0.08, 0.3, 0.6$.}
\label{fig:add_visc_a1_a2} \end{figure}
The results obtained for the additional diffusivity for the different choices of the parameters $a_1$ and $a_2$ are reported in \autoref{fig:add_visc_a1_a2}. The estimates for the power law exponent and the intercept for the different choices of the parameter $a_1$ are reported in \autoref{fig:inter_a1}, while the results for $a_2$ are reported in \autoref{fig:inter_a2}.
\begin{figure}
    \begin{center}
      \setlength{\unitlength}{\textwidth}
        \includegraphics[width=.49\textwidth]{FIG/inter_a1.eps}
        \includegraphics[width=.49\textwidth]{FIG/exp_a1.eps}
            \end{center}
    \caption{Intercept estimate (left) and power law exponent estimate (right) as a function of the parameter $c$. The profile function used is $\varphi(x)$, defined in \eqref{eq:phi_numerics}, with $a_1 = 0.05, 0.5, 1.0$, $a_2 = 0.3$. }
\label{fig:inter_a1} \end{figure}
\begin{figure}[H]
    \begin{center}
      \setlength{\unitlength}{\textwidth}
        \includegraphics[width=.49\textwidth]{FIG/inter_a2.eps}
        \includegraphics[width=.49\textwidth]{FIG/exp_a2.eps}
            \end{center}
    \caption{Intercept estimate (left) and power law exponent estimate (right) as a function of the parameter $c$. The profile function used is $\varphi(x)$, defined in \eqref{eq:phi_numerics}, with $a_1 = 0.05$, $a_2 = 0.3, 0.08, 0.6$. }
\label{fig:inter_a2} \end{figure}
From data in \autoref{fig:inter_a1}, it is evident that for $c \in (0, \nicefrac{1}{2})$ there is not much difference in the (almost linear) behaviour of the additional diffusivity for different values of the parameter $a_1$; the same observation can be drawn from \autoref{fig:inter_a2} for the case of the varying parameter $a_2$. However, the results for $n$ outside of this range seem to depend on the specific parameter choice, while the zero-valued intercept in the range $c \in (\nicefrac{1}{2}, \nicefrac{\sqrt{2}}{2})$ remains unchanged (see \autoref{fig:inter_a1} and \autoref{fig:inter_a2}) and there seems not to be influence of the choice of the parameters $a_1, a_2$ on it.

\appendix
\section{Symmetries of the Homogenized Matrix} \label{app_A1}
We prove \autoref{lem:homogenised_upper_lower_symmetric} and \autoref{prop: diagonality}. 
\subsection{Proof of \autoref{lem:homogenised_upper_lower_symmetric}} \label{sec:standard_homogenised_statements}
\begin{proof}[Proof of \ref{lem:homogenised_upper_lower_symmetric}] 
To prove \ref{it:bar_H_kappa_upper_bnd} we use,  the uniform upper bound of $H_\kappa$, Cauchy-Schwarz and and Jensen inequality to give
        \begin{align*}
            |\bar H_\kappa\xi| & =\left|\sum_{j=1,2} \int_{\T^2} \xi_i H_\kappa(e_i + \nabla \phi_i) \right| \le \Lambda \sum_{j=1,2}  \int_{\T^2}  |\xi_i|\left|e_i + \nabla \phi_i  \right| 
            \\&\le \Lambda |\xi| \left(\sum_{j} \left|\int_{\T^2}  |e_i + \nabla \phi_i |\right|^2\right)^{1/2}  \le \Lambda |\xi| \left(\sum_{j=1,2}\int_{\T^2}|e_i + \nabla \phi|^2\right)^{1/2}
        \end{align*}
To prove \ref{it:bar_H_kappa_lower_bnd}, note that the corrector $\phi_\xi = \xi_i \phi_{i}$ satisfies 
        \begin{equation*}
            \operatorname{div}(A(\xi + \nabla \phi_\xi))=0
        \end{equation*}
        thus 
        \begin{align*}
        |\xi \cdot \bar H_k \xi|&= \int_{\T^2}\xi \cdot H_{\kappa}(\xi + \nabla \phi_\xi) =  \int_{\T^2}(\xi + \nabla \phi_\xi) \cdot H_{\kappa}(\xi + \nabla \phi_\xi) \\
        &\ge \lambda \int_{\T^2}|\xi + \nabla \phi_i|^2 = \lambda (|\xi|^2 + \|\nabla \phi_\xi\|^2_{L^2})
    \ge \lambda |\xi|^2.
        \end{align*}
Finally, \ref{it:bar_H_kappa_symmetric} follow from the symmetry of $H_\kappa$, inherited from that of $A^1$, namely,
\begin{align*}
            (\bar{H}_{\kappa})_{ij} &= e_j \cdot \int_{\T^2}H_\kappa(e_i + \nabla \phi_i) =  \int_{\T^2}(e_j + \nabla \phi_j)\cdot H_\kappa(e_i + \nabla \phi_i) \\
            &= \int_{\T^2}(e_i + \nabla \phi_i)\cdot H_\kappa(e_j + \nabla \phi_j) = (\bar{H}_{\kappa})_{ji}.
        \end{align*}
\end{proof}
The proof of the fourth point, the diagonality of $\bar H_\kappa$, is more delicate and require some preliminary lemmas, which we collect in the next subsection. 
\subsection{Diagonality of the Homogenized Matrix} \label{sec:homogenised_diagonallity}
We want to prove
\begin{proposition}[ \autoref{prop: diagonality}]\label{prop_diag}
   For the choice $M= H_\kappa=\kappa I + A$, we have $\bar H_k = C(c, \kappa) I$, for some constant $C(c, \kappa)\ge \kappa$.
\end{proposition}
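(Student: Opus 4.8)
The plan is to show that $\bar H_\kappa$ inherits the $\tfrac{\pi}{2}$-rotational symmetry carried by $H_\kappa = \kappa I + A$ (a consequence of $\psi$ being radial and of the $\mbZ^2$-periodicity of the construction), and then to use the elementary fact that a symmetric $2\times 2$ matrix commuting with the rotation by $\tfrac{\pi}{2}$ must be a scalar multiple of the identity; symmetry of $\bar H_\kappa$ is already provided by \autoref{lem:homogenised_upper_lower_symmetric}, and the bound $C(c,\kappa)\ge \kappa$ will follow from the lower bound in that same lemma with $\lambda = \kappa$ (using $H_\kappa \ge \kappa I$, \emph{cf.} \eqref{eq:H_uniform_elliptic}), or equivalently from \eqref{eq:homogenised_turb_linear_lower}.

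First I would record the equivariance of the coefficient field. Write $J$ for the matrix of the perpendicular rotation, so that $x^\perp = Jx$ and $J, J^\top = J^{-1} \in \mathrm{SL}_2(\mbZ)$; in particular $x\mapsto J^\top x$ descends to a measure-preserving diffeomorphism of $\mbT^2 = \mbR^2/\mbZ^2$, and $\mbZ^2$-periodic functions pull back to $\mbZ^2$-periodic functions. Since $\psi$ is radial, $\nabla\psi(Qy) = Q\nabla\psi(y)$ for every rotation $Q$; as $\nabla^\perp\psi = J\nabla\psi$ and $J$ commutes with all planar rotations, this yields $\nabla^\perp\psi(Jy) = J\nabla^\perp\psi(y)$. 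Re-indexing the sum defining $A = A^1$ by $k = Jk'$ and using $(Jv)\otimes(Jv) = J(v\otimes v)J^\top$ then gives $A(Jx) = J\,A(x)\,J^\top$, hence $H_\kappa(Jx) = J\,H_\kappa(x)\,J^\top$ on $\mbT^2$, equivalently $J^\top H_\kappa(x)\,J = H_\kappa(J^\top x)$.

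Next I would transfer this symmetry to $\bar H_\kappa$ through its variational description. Set $q(\xi) := \bar H_\kappa\xi\cdot\xi$ and $\phi_\xi := \xi_1\phi_1 + \xi_2\phi_2$, which by linearity of \eqref{eq:corrector} solves $\nabla\cdot(H_\kappa(\xi + \nabla\phi_\xi)) = 0$. Starting from \eqref{eq:hom_H} and integrating by parts against this equation one gets $q(\xi) = \int_{\mbT^2}(\xi + \nabla\phi_\xi)\cdot H_\kappa(\xi + \nabla\phi_\xi)\,\dd x$, and by the Euler--Lagrange characterisation (the functional being strictly convex and coercive on $\dot{\mcH}^1$ since $H_\kappa \ge \kappa I$) this equals $\min_{u\in\dot{\mcH}^1(\mbT^2)}\int_{\mbT^2}(\xi + \nabla u)\cdot H_\kappa(\xi + \nabla u)\,\dd x$. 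Given the minimiser $\phi_\xi$ for the direction $\xi$, I would test the functional for the direction $J\xi$ with the competitor $v := \phi_\xi\circ J^\top \in \dot{\mcH}^1(\mbT^2)$: then $\nabla v(x) = J\nabla\phi_\xi(J^\top x)$, so $J\xi + \nabla v(x) = J(\xi + \nabla\phi_\xi(J^\top x))$, and the equivariance of $H_\kappa$ turns the integrand into $(\xi + \nabla\phi_\xi(J^\top x))\cdot H_\kappa(J^\top x)(\xi + \nabla\phi_\xi(J^\top x))$; the change of variables $y = J^\top x$ then produces exactly $q(\xi)$. Hence $q(J\xi)\le q(\xi)$, and the reverse inequality follows by the same argument with $J$ replaced by $J^\top$, so $q(J\xi) = q(\xi)$ for all $\xi\in\mbR^2$.

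Finally, $q(J\xi) = q(\xi)$ gives $\xi^\top(J^\top\bar H_\kappa J)\xi = \xi^\top\bar H_\kappa\xi$ for all $\xi$; since $\bar H_\kappa$ (and hence $J^\top\bar H_\kappa J$) is symmetric by \autoref{lem:homogenised_upper_lower_symmetric}, we conclude $J^\top\bar H_\kappa J = \bar H_\kappa$, i.e. $\bar H_\kappa$ commutes with $J$, so $\bar H_\kappa = C(c,\kappa)I$ for some scalar $C(c,\kappa)$; the inequality $C(c,\kappa)\ge\kappa$ is then immediate. I expect the only genuinely delicate point to be the equivariance step: one must keep careful track of the fixed rotation hidden inside $\nabla^\perp$, and the reason it causes no trouble is precisely that this rotation commutes with every rotation of the plane, so only the rotational (not reflectional) symmetry of $\psi$ and of the lattice is needed. (An essentially equivalent route, which I would mention as an alternative, is to verify directly that $\phi_{e_2} = \phi_{e_1}\circ J^\top$ solves the corrector equation for $e_2$ and then read off $(\bar H_\kappa)_{11} = (\bar H_\kappa)_{22}$ and $(\bar H_\kappa)_{12} = -(\bar H_\kappa)_{12} = 0$ from \eqref{eq:hom_H}; this avoids the variational characterisation at the cost of a uniqueness argument.)
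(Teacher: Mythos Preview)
Your argument is correct and is in spirit the same as the paper's: both exploit that $H_\kappa$ is equivariant under the lattice isometries and transfer this to $\bar H_\kappa$. The differences are mainly organisational. The paper works with the whole group $G$ of linear isometries of $\mbZ^2$ (generated by $J$ and a reflection $r$), centres the action at $(\nicefrac{1}{2},\nicefrac{1}{2})$, and proves the corrector identity $\phi_{R^J\xi}(x)=(\det R)\,\phi_\xi(\tilde R^{-1}x)$ directly from the weak formulation, then reads off $R^J\bar H_\kappa=\bar H_\kappa R^J$; it uses $R=r$ to kill the off-diagonal entries and $R=J$ to equate the diagonal ones. You instead centre at the origin (which is fine since $J\in\mathrm{SL}_2(\mbZ)$), use only the rotation $J$, and rather than proving the corrector identity you pass through the variational characterisation of the quadratic form; the symmetry of $\bar H_\kappa$ from \autoref{lem:homogenised_upper_lower_symmetric} then does the work that the reflection does in the paper. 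Your route is a bit more economical, and the alternative you sketch at the end (verifying $\phi_{e_2}=\phi_{e_1}\circ J^\top$ and reading off the entries) is precisely the paper's argument specialised to $R=J$.
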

The proof of this proposition relies on the symmetries of $x \rightarrow A^N(x)$, which are inherited by $\bar H_\kappa$. The matrix field $A^N(x)$ is not isotropic, but it is invariant under transformation of the lattice $(\nicefrac{1}{N}\Z^2)$. As a consequence, as $N \rightarrow \infty$, $H_\kappa$ is `\textit{asymptotically} isotropic', thus it is not a surprise that $\overline{H_k}$ is a multiple of the identity. The `nonlinear' nature of the homogenization is hidden in the constant $C(c, \kappa)$ which is a non-trivial function of the parameters. The fact that $C(c, \kappa)\ge \kappa$ is the content of point $ii)$ in \autoref{lem:homogenised_upper_lower_symmetric}. 
In order to prove \autoref{prop: diagonality} we need preliminary lemmas. 
Introduce the matrix $$J= \begin{pmatrix}
    0 & 1 \\-1 & 0
\end{pmatrix}$$
The matrix $J$ satisfies $JJ^t=J^tJ = - JJ=  I$, and for any $v\in \R^2, B\in M_{2\times 2}$, $v^\perp =Jv$, and
$$(Bv)^\perp = JBv=  J B J^t (v^\perp).$$
Given a matrix $B \in M_{2\times 2}$ define $B^J = JBJ^t$. Notice that if $B$ is orthogonal, i.e. $B^tB = BB^t= I$ then, also $B^J$ is, since $(B^J)^tB^J = JB^t J^tJBJ^t = I$.
\begin{lemma}\label{lemma: symmetry of A}
    Let $ \tilde R:\T^2 \rightarrow \T^2$ be such that $\tilde Rx =  R(x-j) + j $ where $j=(1/2, 1/2)$ and $R$ is a linear isometry of the half lattice $\Z^2$. Then it holds 
    $$A( \tilde Rx) = R^JA(x)(R^J)^t,$$
   and analogously
   $$A(\tilde{R}^{-1}x)=(R^J)^tA(x)R^J .$$
\end{lemma}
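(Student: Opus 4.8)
The plan is a direct computation, reducing everything to two elementary facts about the building blocks of $A$. Throughout, under the standing scaling assumption one has $A(x)=A^1(x)=\sum_{k\in\Z^2}\nabla^\perp\psi(x-k)\otimes\nabla^\perp\psi(x-k)$, which at each fixed $x$ is a finite sum by the compact support of $\psi$.

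First I would record the lattice bookkeeping. The hypothesis that $R$ is an isometry of the (half-)lattice $\Z^2$ is precisely what guarantees that the affine map $\tilde R x = R(x-j)+j$ restricts to a bijection of the set of patch centres $\Z^2$ onto itself; moreover the translation parts cancel in a difference, so $\tilde R x-\tilde R k = R(x-k)$ for every $x\in\T^2$ and $k\in\Z^2$. Second, the building-block identity: because $R$ is orthogonal and $\psi$ is radial we have $\psi\circ R=\psi$, whence by the chain rule $(\nabla\psi)(Ry)=R\,\nabla\psi(y)$; applying $v\mapsto v^\perp$ and using the relation $(Bv)^\perp=JBJ^t v^\perp=B^J v^\perp$ recalled above gives
\begin{equation*}
  (\nabla^\perp\psi)(Ry)=\bigl(R\,\nabla\psi(y)\bigr)^\perp=R^J(\nabla\psi(y))^\perp=R^J(\nabla^\perp\psi)(y).
\end{equation*}

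With these in hand the proof is immediate. Reindex the sum defining $A(\tilde R x)$ by $k=\tilde R k'$ (legitimate by the bijectivity above), replace $\tilde R x-\tilde R k'=R(x-k')$, and then use the building-block identity together with $(Mv)\otimes(Mv)=M(v\otimes v)M^t$:
\begin{align*}
  A(\tilde R x)
  &=\sum_{k'\in\Z^2}(\nabla^\perp\psi)(R(x-k'))\otimes(\nabla^\perp\psi)(R(x-k'))\\
  &=\sum_{k'\in\Z^2}\bigl(R^J(\nabla^\perp\psi)(x-k')\bigr)\otimes\bigl(R^J(\nabla^\perp\psi)(x-k')\bigr)
   =R^J A(x)(R^J)^t .
\end{align*}
For the second identity I would simply apply the first one with $R^{-1}$ in place of $R$: since $R^{-1}$ is again a lattice isometry and $\widetilde{R^{-1}}=\tilde R^{-1}$, this yields $A(\tilde R^{-1}x)=(R^{-1})^J A(x)\,((R^{-1})^J)^t$, and the elementary computation $(R^{-1})^J=JR^{-1}J^t=JR^tJ^t=(JRJ^t)^t=(R^J)^t$ (valid precisely because $R$ is orthogonal) turns this into $A(\tilde R^{-1}x)=(R^J)^t A(x)R^J$.

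The only genuinely delicate point is the reindexing step: one must make sure that $\tilde R$ really permutes $\Z^2$ — this is exactly where the assumption that $R$ is a lattice isometry fixing the cell centre $j$ is used — and that the affine parts cancel so that $\tilde R$ acts linearly on the differences $x-k$. Everything else is the chain rule, the radiality of $\psi$, and the algebra of conjugation by $J$.
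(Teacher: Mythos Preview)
Your proof is correct and follows essentially the same route as the paper: reindex the sum over $\Z^2$ using the bijection $k\mapsto\tilde R k$, reduce to the identity $(\nabla^\perp\psi)(Ry)=R^J(\nabla^\perp\psi)(y)$, and conclude via $(Mv)\otimes(Mv)=M(v\otimes v)M^t$. Your derivation of the building-block identity via the chain rule on $\psi\circ R=\psi$ is slightly slicker than the paper's explicit radial computation, and you spell out the second identity (applying the first to $R^{-1}$ and using $(R^{-1})^J=(R^J)^t$) where the paper leaves it implicit.
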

\begin{proof}
Noticing that for $k \in \Z^2$, there exists a unique $ k'\in \Z^2$ such that $k=  \tilde Rk '$, we write  
\begin{align*}
    A( Rx)&= \sum_{k\in \Z^2}\nabla^\perp \psi( \tilde Rx -k)\otimes \nabla^\perp \psi( \tilde Rx -k) \\
    &= \sum_{k\in \Z^2}\nabla^\perp \psi( \tilde R x-\tilde Rk')\otimes \nabla^\perp \psi( \tilde R x-\tilde Rk')\\
    &= \sum_{k\in \Z^2}\nabla^\perp \psi( R (x-k'))\otimes \nabla^\perp \psi( R(x-k'))
\end{align*}
Now since $\nabla ^\perp \psi(|x|)= \frac{x^\perp}{x}f'(|x|)$, and  $|Rx|=|x|$, we have
$$\nabla^\perp \psi(R (x-k')) = \frac{\left(R(x -k')\right)^\perp}{|x-k'|}f'(|x-k'|).$$
By the properties of the matrix $J$, we get 
$$\nabla^\perp \psi(R (x -k'))= JRJ^t \left( \nabla^\perp \psi(x- k')\right).$$
Now we observe that for $v \in \R^2$ and $B\in M_{2\times 2}$ 
$$(Bv \otimes Bv)_{lm} = B_{lj}v_j B_{mi}v_i = B_{lj}(v_jv_i B^t_{im}) = B_{lj}(v\otimes v B^t)_{jm} = (B (v\otimes v) B^t)_{lm}$$
Putting all together we obtain the desired identity. 
\end{proof}
\begin{remark}
    Observe that, since $R$ is a linear isometry, then $R^{-1}=R^t$ and the following identity holds 
    $$JRJ^t \left( (\nabla^\perp \psi)(\tilde R^{-1}x)\right)= \nabla ^\perp (\psi ( \tilde R^{-1}(x) ))$$
    Moreover, the set $G=:\left\{ R: \R^2 \mapsto \R^2 :  R \text{ is a linear isometry of }  \Z^2 \right\}$ is a group under usual matrix multiplication and it is generated by reflections and $\pi/4$-rotations $$r = \begin{pmatrix}
        1 & 0 \\ 0& -1
    \end{pmatrix} \qquad J= \begin{pmatrix}
    0 & 1 \\-1 & 0
\end{pmatrix}.$$
    Finally, it holds $J^J = J, \ r^J=-r$. As a consequence, for every $R\in G$, it holds $R^J= (\det R)R$.
\end{remark}
\begin{proposition}
    Let $\xi \in \R^2$, let $\phi_\xi =\sum_{i=1,2}\xi_i\phi_i$  where $\phi_i$ is the solution to the cell problem \eqref{eq:corrector}, and $R$ be as above. Then it holds $\phi_{R^J\xi} = (\det R) \phi_\xi(\tilde R^{-1}x)$, in particular
    \begin{equation}\label{lem: symm corrector}
        \nabla \phi_{R^J\xi}(y)= R^J\nabla \phi_\xi (\tilde R^{-1}y)
        \end{equation}
\end{proposition}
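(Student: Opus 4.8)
The plan is to use uniqueness of the corrector together with an explicit candidate. Since $\kappa>0$, the matrix field $H_\kappa=\kappa I+A$ is uniformly elliptic, so for every direction $\eta\in\R^2$ the weak formulation of the cell problem \eqref{eq:corrector} has a unique solution $\phi_\eta\in\dot{\mathcal H}^1(\T^2)$ (Lax--Milgram). Hence it suffices to show that $w(x):=(\det R)\,\phi_\xi(\tilde R^{-1}x)$ lies in $\dot{\mathcal H}^1(\T^2)$ and solves \eqref{eq:corrector} with $R^J\xi$ in place of $\xi$, i.e.
$$\nabla\cdot\big(H_\kappa(x)\,(R^J\xi+\nabla w(x))\big)=0 \quad\text{weakly on }\T^2,$$
for then $\phi_{R^J\xi}=w$, and the gradient identity \eqref{lem: symm corrector} will drop out of the computation of $\nabla w$.

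First I would deal with regularity: the affine map $\tilde R x=R(x-j)+j$, $j=(\tfrac12,\tfrac12)$, descends to a measure-preserving automorphism of $\T^2$ because $R(\Z^2)=\Z^2$ and $|\det R|=1$; precomposition with $\tilde R^{-1}$ therefore preserves $\mathcal H^1$-regularity and, by change of variables, the zero-mean condition, so $w\in\dot{\mathcal H}^1(\T^2)$. Next I would compute $\nabla w$ by the chain rule: writing $\tilde R^{-1}x=R^{t}x+\mathrm{const}$ and using $R^J=(\det R)R$ (valid for $R$ in the isometry group of $\Z^2$), one gets $\nabla w(x)=(\det R)\,R\,(\nabla\phi_\xi)(\tilde R^{-1}x)=R^J(\nabla\phi_\xi)(\tilde R^{-1}x)$, which is precisely \eqref{lem: symm corrector}. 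Consequently $R^J\xi+\nabla w(x)=R^J\big(\xi+(\nabla\phi_\xi)(\tilde R^{-1}x)\big)$.

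Now I would insert the transformation law for $H_\kappa$. From \autoref{lemma: symmetry of A}, $H_\kappa=\kappa I+A$ and the orthogonality of $R^J$ (so that $R^J(\kappa I)(R^J)^{t}=\kappa I$) one obtains $H_\kappa(\tilde R z)=R^J H_\kappa(z)(R^J)^{t}$, equivalently $H_\kappa(x)=R^J H_\kappa(\tilde R^{-1}x)(R^J)^{t}$. Since $(R^J)^{t}R^J=I$ this gives
$$H_\kappa(x)\big(R^J\xi+\nabla w(x)\big)=R^J\,G(\tilde R^{-1}x),\qquad G(z):=H_\kappa(z)\big(\xi+\nabla\phi_\xi(z)\big),$$
and $\nabla_z\cdot G(z)=0$ is exactly the cell problem satisfied by $\phi_\xi$. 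It then remains to verify $\nabla_x\cdot\big(R^J G(\tilde R^{-1}x)\big)=0$: componentwise, $\partial_{x_l}\big[(R^J)_{lm}G_m(\tilde R^{-1}x)\big]=(\det R)\,R_{lm}R_{lj}\,(\partial_{z_j}G_m)(\tilde R^{-1}x)=(\det R)\,(\nabla_z\cdot G)(\tilde R^{-1}x)=0$, where I used $R_{lm}R_{lj}=(R^{t}R)_{mj}=\delta_{mj}$. In the weak formulation this same identity is obtained by testing against $\varphi\in C^\infty(\T^2)$, integrating by parts, and changing variables $x\mapsto\tilde R x$, which is a measure-preserving bijection of $\T^2$. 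Uniqueness then yields $\phi_{R^J\xi}=w$, proving the proposition.

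I do not expect an essential obstacle: the whole argument is bookkeeping with the chain rule and the algebraic identities $R^J=(\det R)R$, $(R^J)^{t}R^J=I$ and $R^{t}R=I$. The only delicate point is keeping transposes and the factor $\det R$ in the right places — in particular ensuring that the same $R^J$ appears on the $\xi$-side and on the gradient-side, which is precisely why the normalisation $w=(\det R)\,\phi_\xi\circ\tilde R^{-1}$ (rather than $\phi_\xi\circ\tilde R^{-1}$) is the correct choice.
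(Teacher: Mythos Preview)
Your proof is correct and follows essentially the same approach as the paper: both exhibit the candidate $(\det R)\,\phi_\xi\circ\tilde R^{-1}$, verify via the transformation law $H_\kappa(x)=R^J H_\kappa(\tilde R^{-1}x)(R^J)^t$ and a change of variables that it solves the cell problem in direction $R^J\xi$, and conclude by uniqueness. The only cosmetic difference is that the paper carries out the verification entirely in the weak formulation against a generic test function $F$, whereas you compute the divergence pointwise (and then note the weak version follows by the same change of variables); the algebraic content is identical.
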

\begin{proof}
First, recall that $\phi_\xi$ solves 
$$\nabla \cdot (\kappa I + A)(\xi + \nabla \phi_\xi)=0. $$
    We use the weak formulation of the cell problem \eqref{eq:corrector}. We must show that for every $F\in \H^1({\T^2})$ we have 
    \begin{equation*}
       \kappa\int_{\T^2} \brak{\nabla F(x), R^J v + R^J\nabla \phi_v(\tilde R^{-1}x)}\dd x + \int_{\T^2} \brak{\nabla F(x), A(x)[R^J v + R^J\nabla \phi_v(\tilde R^{-1}x)]  }\dd x = \RN{1}+ \RN{2} =
       0
    \end{equation*}
    First we use the symmetries of $A$ from \autoref{lemma: symmetry of A} to write 
    \begin{align*}
        \RN{2} &= \int_{\T^2} \brak{\nabla F(x), [A(x)R^J] (\xi + \nabla \phi_\xi(\tilde R^{-1}x))  }\dd x \\
        &=\int_{\T^2} \brak{\nabla F(x), R^JA(\tilde R^{-1}x) (\xi + \nabla \phi_\xi(\tilde R^{-1}x))  }\dd x  \\
        &= \int_{\T^2} \brak{ (R^J)^t\nabla F(x), A(\tilde R^{-1}x) (\xi + \nabla \phi_\xi(\tilde R^{-1}x))  }\dd x. 
    \end{align*}
    Now we perform the change of variable $y = \tilde R^{-1}x$ and we notice that $(R^J)^t\nabla F(\tilde R y)= (\det R)\nabla (F(\tilde R y)) $. Calling $G= F\circ \tilde R$ thus we get 
    $$\RN{2} = \det R \int_{\T^2} \brak{\nabla G(y), A(y)(\xi + \nabla \phi_\xi(y))} dy.$$
    Analogously 
    $$\RN{1}= \det R \int_{\T^2}\brak{ \nabla G(y), \kappa(\xi + \nabla \phi_\xi)}$$
    Putting this together and using that $\phi_\xi$ solves the cell problem for $\xi$, we get $\RN{1} + \RN{2} = 0$ as desired. By the uniqueness of the solution to the cell problem, we get the equality $\phi_{R^J\xi} = (\det R) \phi_\xi(\tilde R^{-1}x)$.  
\end{proof}
We are now ready to prove the main proposition of this section
\begin{proof}[Proof of \autoref{prop_diag}]
We show that, for every $R \in G$ as above, 
\begin{equation}\label{hom matrix symmetry}
    \bar H_\kappa R^Jv = R^J\bar H_\kappa v.
\end{equation}
This is equivalent to $(R^J)^t \bar H_\kappa R^J = \bar H_\kappa$, which implies that $\bar H_\kappa$ is a multiple of the identity. Indeed, the operation $R \rightarrow R^J$ is a group isomorphism of $G$, thus we get $R^t\bar H_\kappa R = \bar H_\kappa$ for every $R\in G$, Now using $R=r$ gives that the off diagonal terms of $\bar H_\kappa$ are zero, while using $R=J$ gives that the diagonal terms are equal. 
We now prove \eqref{hom matrix symmetry}. Recalling that since $R$ was assumed to be a linear isometry of the lattice, it is an orthogonal matrix so that $R^J(R^J)^t=I$ and hence
\begin{align*}
 \overline H_\kappa  R^J\xi &= \int \left(\kappa I +A(x))(R^J\xi + \nabla\phi_{R^J\xi}(x)\right) \dd x \\
 & =  \kappa \int_{\T^2}(R^J\xi + \nabla\phi_{R^J\xi}(x))\dd x + \int R^J \left((R^J)^tA(x)R^J\xi + (R^J)^tA(x) \nabla \phi_{R^J\xi}(x)\right)\,\dd x,
 \end{align*}
So plugging in the identity \eqref{lem: symm corrector}, using the symmetry of $A$ from \autoref{lemma: symmetry of A}, changing variables in the integral and using again the fact that $\tilde{R}$ is orthogonal, we obtain
\begin{align*}
     \overline H_\kappa  R^J\xi &= R^J\left(\kappa \int_{\T^2}(\xi + \nabla\phi_\xi (\tilde R^{-1}x))\dd x + \int \big((R^J)^tA(x)R^J\xi + (R^J)^t A(x) R^J\nabla \phi_{\xi}(\tilde R^{-1}x)\big) \, \dd x \right) \\
     &= R^J\left(\kappa \int_{\T^2}(\xi + \nabla\phi_\xi (\tilde R^{-1}x))\dd x + \int A(\tilde R^{-1}x)\xi + A(\tilde R^{-1}x)\nabla \phi_{\xi}(\tilde R^{-1}x))\right) \\
     &= R^J\left(\kappa \int_{\T^2}(\xi + \nabla\phi_\xi (y))dy + \int A(y)\xi + A(y)\nabla \phi_{\xi}(y))dy\right)\\
     &= R^J\left(\int_{\T^2} (\kappa I + A(y))(\xi + \nabla \phi_\xi(y))dy\right) \\
     &= R^J \bar H_\kappa \xi
\end{align*}
which concludes the proof. 
   \end{proof}

\section{Proof of the Geometric Lemma}\label{A1_b}
In this section we provide a proof of the geometric \autoref{lem:geometry_fact}
\begin{proof}[Proof of \autoref{lem:geometry_fact}]
Given a point $x\in \mbT^2$, a unit vector $\eta \in \mbR^2$ and an angle $\theta \in [0,\pi)$ let us introduce the notation 
\begin{equation*}
    C^x_{\eta, \theta}:= \left\{\xi:\  |\xi\cdot (x-\eta) |\le |\xi|\cos \theta\right\}
\end{equation*}
to denote the cone with normal $\eta$ and opening angle $\theta$. We note given $x\in \mbT^2$, the set of vectors $v\in \mbR^2\setminus\{0\}$ such that \eqref{eq:ellipticity_geometry_fact} does not hold for a given $k \in \{(0,0),\,(1,0),\,(0,1),\,(1,1)\}$ is exactly the set $C^x_{(x-k)^\perp,\frac{\pi}{4}}$. Hence, it is enough to show that for any $x\in \mbT^2$
\begin{equation}\label{eq:geometry_intersection_claim}
    \bigcap_{k} C^x_{(x-k)^\perp,\frac{\pi}{4}} = \{x\},
\end{equation}
so that in turn there exists at least one vector $v\in \partial B(0,1)$ lying outside of at least one of the cones $C^x_{(x-k)^\perp,\frac{\pi}{4}}$. In this case \eqref{eq:ellipticity_geometry_fact} holds for this $v$ and this $k$.

Its an easy exercise to show that fixing $\theta \in [0,\pi)$ and any $x\in \mbT^2$
\begin{equation*}
    C^x_{\eta_1, \theta} \cap C^x_{\eta_2, \theta} = \left\{x\right\} \quad \iff \quad |\arccos(\eta_1\cdot \eta_2)|\geq \theta. 
\end{equation*}
That is, the two cones have non-trivial intersection if and only if the angle between the normals $\eta_1$ and $\eta_2$ is wider than $\theta$ and smaller than $\pi-\theta$ (up to multiples of $\pi$).

Let $k$ and $k'$ be adjacent members of the set $\{k_i\}_{i=1}^4 \coloneqq \{(0,0),\,(0,1),\,(1,0),\,(1,1)\}$. Denoting by $\angle(k,x,k')$ the angle (at $x$) made by the points $k,x k'$ then one readily checks that
\begin{equation}
    \min_{x\in \mbT^2} |\angle(k,x,k')| \geq \frac{\pi}{4},
\end{equation}
where the extreme case obtains for $x \in \{(0,0),\,(0,1),\,(1,0),\,(1,1)\} \setminus \{k,\,k'\}$. Since angles are preserved by translations and rotations, it must also hold that
\begin{equation}\label{eq:x_centred_lower_bound}
     \min_{x\in \mbT^2} \angle\big((x-k)^\perp,x,(x-k')^\perp\big) \geq \frac{\pi}{4},
\end{equation}
where we now assume that $k$ and $k'$ are such that $(x-k)^\perp$ and $(x-k')^\perp$ are adjacent. That is, there is no other $\tilde{k}\in \{(0,0),\,(0,1),\,(1,0),\,(1,1)\}$ such that 
\begin{equation*}
    \angle\big((x-k)^\perp,x,(x-\tilde{k})^\perp\big)  \wedge  \angle\big((x-\tilde{k})^\perp,x,(x-k')^\perp\big)  \leq \angle\big((x-k)^\perp,x,(x-k')^\perp\big). 
\end{equation*}
On the other hand, for all $x\in \mbT^2$, one has 
\begin{equation}
    \sum_{i=1}^4 \angle\big((x-k_i)^\perp,x,(x-k_{i+1 \text{ mod } 4})^\perp\big) = 2\pi, 
\end{equation}
Hence, for any $x\in \mbT^2$, there must exist at least one $i\in \{1,\ldots,4\}$ such that
\begin{equation}
  \angle\big((x-k_i)^\perp,x,(x-k_{i+1})^\perp\big) < \frac{\pi}{2},
\end{equation}
since if this were not the case (i.e. all angels were greater than $\nicefrac{\pi}{2}$ simultaneously) the sum would be greater than $2\pi$.

Hence, for any $x\in \mbT^2$, there exists at least one $i\in \{1,\ldots,4\}$ (understood with periodicity) such that
\begin{equation}
    \frac{\pi}{4} \leq  \angle\big((x-k_i)^\perp,x,(x-k_{i+1})^\perp\big) <\frac{\pi}{2}.
\end{equation}
Hence, for the same $x\in \mbT^2$ and $i \in \{1,\ldots,4\}$ as above
\begin{equation*}
    C^x_{(x-k_i)^\perp,\frac{\pi}{4}} \cap C^x_{(x-k_{i+1})^\perp,\frac{\pi}{4}} = \{x\}.
\end{equation*}
So that \eqref{eq:geometry_intersection_claim} holds. To prove the last statement, assume that the selected $\bar k$ is such that $|x-\bar k|> \sqrt{5}/2$. Without loss of generality, suppose $\bar k = (1,1)$. Then $x$ belong to the region $[1/2, 1/2]^2$. Consider the angle $\angle (1,0), x, (0,1)$. It is easy to see that, by construction, this angle is at most $\pi$ (it is exactly $\pi$ when $x=(1/2, 1/2)$), and at least $\pi/2$ (when $x = (0,0)$). This implies that the remaining angles $\angle (0,0), x, (0,1)$ and $\angle (0,0), x, (1,0)$ cannot be both strictly larger than $\pi/4$, since otherwise, the sum of the three angles would be larger than $2\pi$, which cannot be. This implies that at least one among the cones relative to $(1,0)$ and $(0,1)$ must be disjoint from the cone relative to $(0,0)$. The proof is completed by noticing that for each $x$ such that $|x-(1,1)|>\sqrt{5}/2$, we have $|x-k|< \sqrt{5}/2$ for any $k=(0,0), (1,0), (0,1)$.

\end{proof}

\section*{Acknowledgment}
S.M. acknowledges partial support by Italian Ministry of University and Research under the PRIN project Convergence and Stability of Reaction and Interaction Network Dynamics (ConStRAINeD) (2022XRWY7W, CUP I53D23002460001) and of INdAM through the INdAM-GNAMPA Project Modelli stocastici in Fluidodinamica e Turbolenza (CUP \#E5324001950001\#).

A.M. acknowledges support from the ERC Advanced Grant no. 101053472 for hosting them on a long term visit to Scuola Normale Superiore di Pisa during the production of this work.

In addition all authors wish to thank F. Flandoli, E. Luongo, F. Grotto, C. Bonati, L. Robol and M. Viviani for insightful discussions during development of these results.

\bibliography{biblio.bib,Mayorcas}{}
\bibliographystyle{alpha}
\end{document}